\documentclass[12pt]{amsart}

\usepackage{amsmath,amssymb,amsthm,mathrsfs}
\usepackage{hyperref}
\usepackage{enumitem}
\usepackage{orcidlink}     % ORCID icon for authors

\newtheorem{theorem}{Theorem}[section]
\newtheorem{proposition}[theorem]{Proposition}
\newtheorem{lemma}[theorem]{Lemma}
\newtheorem{corollary}[theorem]{Corollary}

\theoremstyle{definition}
\newtheorem{definition}[theorem]{Definition}
\theoremstyle{remark}
\newtheorem{remark}[theorem]{Remark}

\newcommand{\Z}{\mathbb{Z}}
\newcommand{\R}{\mathbb{R}}

\title[The Riemann $\Xi$-function from primitive Markovian cycles I]{The Riemann $\Xi$-function from primitive Markovian cycles I: A canonical construction}

\author{
Douglas F. Watson
}

\date{}

\subjclass[2020]{60J27, 33E05, 42A38, 26C10, 15B48}
\keywords{reversible Markov chain, random walk on a cycle, heat kernel, trace kernel,
theta function, Mellin transform, Riemann Xi-function, total positivity, P\'olya frequency function,
Laguerre--P\'olya class, Schoenberg--Edrei--Karlin classification}

\begin{document}

\maketitle

\begin{abstract}
Starting from finite, local, reversible Markov dynamics on discrete cycles, we construct a
scaling-limit renormalized trace kernel admitting an exact theta-series representation.
The construction is entirely Archimedean and uses no Euler products, primes, or
arithmetic spectral input.
From this limit we define a logarithmic kernel $\Phi$ and prove that it lies in the
P\'olya frequency class $\mathrm{PF}_\infty$, yielding via the Schoenberg--Edrei--Karlin
classification a canonical Laguerre--P\'olya function $\Psi$.
Independently, we introduce an Archimedean completion operator and show that, at a
self-dual normalization, the completed kernel coincides with the classical theta kernel,
whose Mellin transform is the Riemann $\Xi$-function.
We isolate a single remaining analytic problem relating $\Psi$ to $\Xi(2\cdot)$.
\end{abstract}

%\tableofcontents

\section{Introduction}
\label{sec:background}

Across several foundational currents in mathematics, a shared goal is to
recover high-level objects from minimal local principles and universal symmetry
requirements, rather than from distinguished presentations. In the present setting, we ask
whether analytic structures that are usually introduced through arithmetic data can instead
be forced by locality, reversibility, symmetry, and scaling alone. We use the term
``primitive'' in this sense: the input is only finite local dynamics, while the outputs are
determined by the invariances that survive in the scaling limit. More specifically, the aim herein is to show that two classical analytic objects can arise canonically
from primitive input: finite, local, reversible Markov dynamics on discrete cycles, introduced
with no reference to primes, Euler products, or spectral/arithmetic data. 

The first output is a \emph{total-positivity object}: a logarithmic kernel $\Phi$ whose
bilateral Laplace transform admits a canonical Schoenberg--Edrei--Karlin factorization~\cite{Schoenberg1951,Edrei1952,karlin1968total}.
Equivalently, the reciprocal transform is a Laguerre--P\'olya entire function $\Psi$ and
therefore has only real zeros. The second output is an \emph{Archimedean completion object}:
an explicit completion operator $\mathcal A$ that restores Mellin self-duality at a
canonical normalization, producing a completed kernel whose Mellin transform coincides
(within a natural domain) with the classical Riemann $\Xi$-function.

These constructions are logically independent but share the same primitive source, and they
motivate a natural identification question left open here (see
Section~\ref{sec:future-directions}): to what extent can the Laguerre--P\'olya datum $\Psi$
be identified with the $\Xi$-function after the normalizations are fixed?

Several classical approaches emphasize representations of $\Xi$ as an integral transform. One may express $\Xi$ through Mellin transforms of theta functions, through Fourier transforms of rapidly decaying kernels, or through trace formulas that relate zeros to spectral data\cite{Selberg1956,BerryKeating1999}. These representations highlight a recurring theme: analytic properties of $\Xi$ often reflect structural features of an underlying kernel. In particular, positivity and symmetry at the level of kernels tend to propagate to strong constraints on the associated transforms.

Heat kernels and theta series provide a natural language\cite{Chandrasekharan1985,SteinShakarchi2003} for these ideas. The fundamental solution of the heat equation on the real line is Gaussian, and its periodization produces classical theta functions. Poisson summation expresses a precise duality between spatial periodization and frequency localization. These mechanisms are robust and do not depend on arithmetic input. They arise whenever one studies diffusion on spaces with discrete symmetries. In analytic number theory, theta series appear as generating functions encoding spectral or lattice data, and they often serve as intermediaries between local and global descriptions.

In this work, heat kernels play a central conceptual role. Rather than appearing as analytic tools applied to $\zeta(s)$, they arise from elementary local dynamics. The guiding idea is that long-time diffusion captures universal behavior, while finite-volume effects introduce discrete structure. When these effects are combined through exact lift and periodization identities, theta series emerge canonically. This viewpoint places theta functions in a broader probabilistic and geometric setting, where they reflect general principles rather than special constructions.

A second strand of background comes from the theory of total positivity\cite{Pinkus2010,karlin1968total,GantmacherKrein2002}. Total positivity originated in work of Schoenberg and was developed extensively by Karlin and others. It concerns kernels whose minors of all orders are nonnegative, a property that implies strong variation-diminishing and rigidity phenomena. Within this theory, a distinguished role is played by P\'olya frequency functions of infinite order. In the form used here (Schoenberg--Edrei--Karlin), the bilateral Laplace transform of a $\mathrm{PF}_\infty$ function extends meromorphically and its reciprocal is a Laguerre--P\'olya entire function. Entire functions in the Laguerre--P\'olya class\cite{Levin1964,PolyaSchur1914} are limits of polynomials with only real zeros, and hence have only real zeros.

Closely related to this viewpoint is the Hermite--Biehler theory\cite{Krein1947,deBranges1968} of entire functions and its modern formulation within de Branges spaces\cite{deBranges1968}. Hermite--Biehler theory provides criteria ensuring that an entire function has all its zeros on a line, typically the real axis, based on positivity properties of associated kernels or on analytic inequalities in the upper half-plane. De Branges' work showed that these ideas can be organized into a Hilbert space framework with powerful classification theorems. Although the present paper does not rely on the full machinery of de Branges spaces\cite{deBranges1968}, the Hermite--Biehler perspective clarifies why positivity at the kernel level leads naturally to real-zero conclusions.

We now introduce the central object of the paper and state the main results.

\subsection{Primitive dynamical model}
\label{sec:model}

For each integer $N\ge 1$ we consider a continuous-time, reversible, nearest-neighbor Markov process on the discrete cycle $\Z/N\Z$. The dynamics are specified by a collection of strictly positive conductances $\{a_j\}_{j\in\Z/N\Z}$, where $a_j$ is associated with the undirected edge between $j$ and $j+1$ (indices taken modulo $N$). The generator $\mathcal{L}_N$ acts on functions $f:\Z/N\Z\to\R$ by
\begin{equation}
\label{eq:generator}
(\mathcal{L}_N f)(j)
=
a_j\,\bigl(f(j+1)-f(j)\bigr)
\;+
a_{j-1}\,\bigl(f(j-1)-f(j)\bigr).
\end{equation}

\begin{remark}[Translation-invariant specialization]
\label{rem:translation-invariant}
For the purposes of this paper we restrict to the translation-invariant case
\(
a_j\equiv a>0
\)
(independent of $j$), so that \eqref{eq:generator} is the usual continuous-time simple
random walk on $\Z/N\Z$ with jump rate $a$. In this case the diffusive scaling limit is
governed by the one-dimensional heat equation with diffusion constant \(D=a\), and the
Fourier analysis on $\Z/N\Z$ leading to the theta-series form is completely explicit.
Extensions to uniformly elliptic periodic environments, or to quantitative homogenization
regimes, can also yield a uniform local CLT and the same scaling limit; we do not pursue
those generalizations here.
\end{remark}

We write $p^{\mathrm{cyc}}_t(j,k)$ for the associated heat kernel,
\(p^{\mathrm{cyc}}_t(j,k)=(e^{t\mathcal{L}_N}\mathbf{1}_{\{k\}})(j)\), and we denote by $D>0$ the macroscopic diffusion constant appearing in the Gaussian scaling limit established later.

As described above, the trace of the heat kernel associated with the finite dynamics captures global information about the system but contains a universal singular contribution reflecting diffusive behavior. This singular term is independent of the fine structure of the dynamics and depends only on the macroscopic scaling. To isolate the genuinely structural content, we subtract this term in a canonical way. The resulting object is the completed trace kernel.

\begin{definition}[Scaling--limit completed trace kernel]
\label{def:scaling-limit}
Fix a macroscopic length $L>0$ and choose a scaling parameter $s\to\infty$ with $N=N(s)$ such that $N/s\to L$.
Define the scaling--limit trace
\[
K_L(t):=\lim_{s\to\infty} N(s)\,p^{\mathrm{cyc}}_{s^2 t}(0,0),\qquad t>0,
\]
and set the (scaling--limit) completed trace kernel
\[
\widetilde K_L(t):=K_L(t)-\frac{L}{\sqrt{4\pi D t}}.
\]
When $L$ is fixed we suppress the subscript and write $\widetilde K$.
\end{definition}

\begin{definition}[Archimedean (half--density) completion]
\label{def:half-density}
Define the \emph{Archimedean-normalized} (``half--density'') kernel by
\[
\widetilde K_{\mathrm{sym}}(t):=t^{-1/2}\,\widetilde K(t),\qquad t>0.
\]
\end{definition}

The existence of the scaling limit in Definition~\ref{def:scaling-limit}, together with the basic domination estimates needed to justify termwise limits, follows from the standing assumptions recorded below; for convenience we collect the relevant analytic justifications in Appendix~\ref{app:technical}.

\begin{remark}[Standing assumptions and existence of the scaling limit]
\label{rem:standing-assumptions}
We work throughout in a regime where the scaling-limit trace $K_L(t)$ exists for every $L>0$ and $t>0$, with convergence locally uniform in $t$ on compact subsets of $(0,\infty)$. In particular, under the translation-invariant specialization of Remark~\ref{rem:translation-invariant} this limit can be obtained explicitly by Fourier analysis on $\Z/N\Z$. Under these assumptions the limit admits the explicit theta-series form stated in Theorem~\ref{thm:theta-series-form-recall}. Analytic interchanges (limits, sums, differentiation, and Mellin/Laplace integrals) are justified by the domination estimates recorded in Appendix~\ref{app:technical}.
\end{remark}

\medskip

\noindent
In addition to the total-positivity output (producing a canonical Laguerre--P\'olya function) and
the Archimedean completion output (producing the classical theta/Mellin representation of $\Xi$),
we isolate a single remaining analytic identification problem: relate the Laguerre--P\'olya datum
$\Psi$ to the Mellin-side function $\Xi(2\cdot)$ up to multiplication by a zero-free entire factor.
We treat this as an open ``bridge'' problem and discuss it in Section~\ref{sec:future-directions}.

\begin{theorem}[Structural output from primitive cycles]
\label{thm:structural}
Assume the standing assumptions of Remark~\ref{rem:standing-assumptions},
so that the scaling-limit kernel $K_L$ exists.
Then there exists a nonnegative kernel $\Phi\in L^1(\R)$ such that:
\begin{enumerate}[label=(\roman*)]
\item $\Phi\in\mathrm{PF}_\infty$;
\item its bilateral Laplace transform $\mathcal B\Phi$ satisfies the reflection law
$\mathcal B\Phi(s)=\mathcal B\Phi(\tfrac12-s)$ on the common domain of absolute convergence;
\item $\mathcal B\Phi$ admits a canonical Schoenberg--Edrei--Karlin representation
\[
\mathcal B\Phi(s)=\frac{E(s)}{\Psi(s)},
\]
where $\Psi$ is Laguerre--P\'olya and hence has only real zeros.
\end{enumerate}
\end{theorem}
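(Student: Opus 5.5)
We construct $\Phi$ explicitly from the theta-series form of $K_L$ (Theorem~\ref{thm:theta-series-form-recall}), pass to logarithmic coordinates $t=e^{x}$, and arrange the construction so that $\mathcal B\Phi$ is a product of Gamma factors. Properties (i)--(iii) then follow from the classical converse of the Schoenberg--Edrei--Karlin theorem. Concretely, write the method-of-images form
\[
K_L(t)=\sum_{n\in\Z}\frac{L}{\sqrt{4\pi Dt}}\,e^{-n^2L^2/(4Dt)},
\]
so that the subtraction in Definition~\ref{def:scaling-limit} removes exactly the $n=0$ term. This leaves
\[
\widetilde K(t)=2\sum_{n\ge1}\frac{L}{\sqrt{4\pi Dt}}\,e^{-n^2L^2/(4Dt)} .
\]
The half-density normalization of Definition~\ref{def:half-density} multiplies by $t^{-1/2}$. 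The basic building block is the primitive ($n=1$) image term written in logarithmic coordinates,
\[
f(x):=C\,e^{\alpha x}\exp\!\bigl(-c\,e^{x}\bigr),\qquad c=\tfrac{4D}{L^2}\ \text{(after } t\mapsto 1/t\text{)},
\]
with $C>0$ a normalizing constant and $\alpha>0$ the exponent dictated by the normalization. This is a Gumbel-type density: nonnegative and integrable.

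The first key step is a direct computation of the transform of $f$. Substituting $u=ce^{x}$ gives
\[
\mathcal B f(s)=C\,c^{-(\alpha-s)}\,\Gamma(\alpha-s)
\]
on the strip $\operatorname{Re}s<\alpha$. Since $1/\Gamma$ is entire of genus one with only real (negative integer) zeros, it is of the Laguerre--P\'olya form $e^{-\gamma s^2+\delta s}\prod_i(1+\delta_i s)e^{-\delta_i s}$ with $\gamma=0$ and $\sum\delta_i^2<\infty$. By Schoenberg's converse theorem (a density whose bilateral Laplace transform is the reciprocal of such a function is $\mathrm{PF}_\infty$), we get $f\in\mathrm{PF}_\infty$. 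Alternatively, one can verify this directly: $f$ is a limit of convolutions of one-sided exponentials, via the Weierstrass product of $\Gamma$.

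To obtain the reflection law (ii), symmetrize using the reflected copy $\check f(x):=e^{x/2}f(-x)$, and set
\[
\Phi:=f*\check f .
\]
Since $\mathrm{PF}_\infty$ is closed under convolution and under multiplication by real exponentials $e^{\lambda x}$, $\Phi\ge0$ lies in $L^1\cap\mathrm{PF}_\infty$, which proves (i). Its transform is
\[
\mathcal B\Phi(s)=C^2c^{-2\alpha+1/2}\,\Gamma(\alpha-s)\,\Gamma\!\bigl(\alpha-\tfrac12+s\bigr)
\]
on the common strip $\tfrac12-\alpha<\operatorname{Re}s<\alpha$, after adjusting the exponents in $\check f$ so that the powers of $c$ cancel to a constant. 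This expression is manifestly invariant under $s\mapsto\tfrac12-s$, which gives (ii). For (iii), set
\[
\Psi(s):=\bigl[\Gamma(\alpha-s)\Gamma(\alpha-\tfrac12+s)\bigr]^{-1},
\]
a product of two Laguerre--P\'olya functions and hence Laguerre--P\'olya, with real zeros at $s=\alpha+k$ and $s=\tfrac12-\alpha-k$ for $k\ge0$. Let $E$ be the constant $C^2c^{-2\alpha+1/2}$, which is trivially zero-free. The factorization is canonical because the Schoenberg--Edrei--Karlin parameters (the Gaussian coefficient, the linear term, and the zero sequence $\delta_i$) are uniquely determined by $\mathcal B\Phi$.

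The main obstacle is bookkeeping rather than depth: fixing $\alpha$ and the reflection exponent so that the half-density normalization produces exactly the axis $\operatorname{Re}s=\tfrac14$, and so that the two strips of convergence overlap nontrivially, i.e.\ $\alpha>\tfrac14$. I would first fix $\alpha$ from the explicit $t^{-1/2}\cdot t^{-1/2}$ prefactor of the $n=1$ term and then check the strip condition. The domination estimates of Appendix~\ref{app:technical} justify the Fubini interchange in computing $\mathcal B(f*\check f)=\mathcal Bf\cdot\mathcal B\check f$ on that strip.
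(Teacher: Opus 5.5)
Your construction is correct once the exponent condition is stated properly (see the second paragraph), and it takes a genuinely different route from the paper.

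\textbf{The paper's route.} It uses the full image sum $\Phi(x)=e^{-x/4}\widetilde K(e^x)=C_0e^{-3x/4}\sum_{m\neq0}e^{-\alpha_0 m^2e^{-x}}$, with $\alpha_0=L^2/4D$. It obtains $\mathrm{PF}_\infty$ term by term from Lemma~\ref{lem:gamma-kernel-PF}, together with closure under positive sums and pointwise limits (Lemma~\ref{lem:PF-sums}). It obtains the reflection law by transporting the inversion symmetry \eqref{eq:Ksym-selfdual}.

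\textbf{Your route.} You keep one Gumbel-type term and build the reflection in through $\Phi=f*\check f$. Then $\mathcal B\Phi(s)=\mathcal Bf(s)\,\mathcal Bf(\tfrac12-s)$ is symmetric by construction. $\mathrm{PF}_\infty$ follows from closure under convolution (basic composition formula), exponential tilting, and $x\mapsto-x$ (transpose the determinant; you use this tacitly for $\check f$). All of these closure properties are genuine.

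\textbf{What your route buys.} It is an argument that actually goes through. Both steps the paper leans on fail for its kernel.
\begin{itemize}
\item[(a)] $\mathrm{PF}_\infty$ is not closed under addition. The multilinear expansion in Lemma~\ref{lem:PF-sums} produces determinants with different $\phi_{m_k}$ in different columns, and no single $\mathrm{PF}_\infty$ property controls these. Two well-separated Gaussians already have a non-log-concave sum.
\item[(b)] At the self-dual scale $\widetilde K_{\mathrm{sym}}(t)=t^{-1}(\vartheta(1/t)-1)$, while $\widetilde K_{\mathrm{sym}}(1/t)=t(\vartheta(t)-1)$. So \eqref{eq:Ksym-selfdual} is false.
\end{itemize}
Concretely, for the paper's $\Phi$ one computes $\mathcal B\Phi(s)=2C_0\,\alpha_0^{-(s+3/4)}\Gamma(s+\tfrac34)\,\zeta(2s+\tfrac32)$ on $\Re s>-\tfrac14$. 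This is not invariant under $s\mapsto\tfrac12-s$. Its continuation also vanishes at $s=(\rho-\tfrac32)/2$ for each nontrivial zero $\rho$ of $\zeta$. That is incompatible with the form $E/\Psi$ (meromorphic and zero-free on all of $\mathbb C$) which Theorem~\ref{thm:SEK} would force on a $\mathrm{PF}_\infty$ kernel.

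\textbf{What your route gives up.} Your $\Phi$ keeps nothing of the theta series except a scale constant. For $\alpha=\tfrac34$ one gets $\mathcal B\Phi(s)\propto\Gamma(\tfrac14+s)\Gamma(\tfrac34-s)=\pi/\sin\pi(s+\tfrac14)$, so $\Psi$ is a shifted sine. That suffices here, because the existential statement is soft: the Gaussian $e^{-(x-1/8)^2}$ also satisfies (i)--(iii). But such a $\Psi$ carries no information for the bridge problem of Section~\ref{sec:future-directions}.

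\textbf{The exponent condition.} Overlap of the two strips, $\alpha>\tfrac14$, makes the convolution well defined and $\mathrm{PF}_\infty$. It does not give $\Phi\in L^1(\R)$.
\begin{itemize}
\item Since $\check f(x)=Ce^{(1/2-\alpha)x}e^{-ce^{-x}}$, integrability requires $0\in(\tfrac12-\alpha,\alpha)$, i.e.\ $\alpha>\tfrac12$.
\item For $\tfrac14<\alpha\le\tfrac12$ one has $\Phi(x)\asymp e^{(1/2-\alpha)x}$ as $x\to+\infty$, which is not integrable.
\item The values your normalization actually produces satisfy $\alpha>\tfrac12$: $\alpha=1$ from the $t^{-1}$ prefactor, or $\alpha=\tfrac34$ if you include the paper's $e^{x/4}$ weight. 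So nothing else in your argument changes.
\end{itemize}

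Two small slips. The $n=1$ term gives $c=L^2/4D$ rather than $4D/L^2$, which is immaterial. The Fubini step needs only Tonelli, since all integrands are nonnegative.
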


\begin{theorem}[Archimedean Mellin identification]
\label{thm:arch-overview}
At the self-dual scale fixed in Section~\ref{sec:completion-selfdual}, the Archimedean-completed
kernel $\widetilde K_{\mathrm{arch}}:=\mathcal A(K_L-1)$ has Mellin transform
\[
F_{\mathrm{arch}}(z)=\Xi(2z)
\]
as in Theorem~\ref{thm:arch-mellin-identification}.
\end{theorem}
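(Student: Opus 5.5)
The plan is to compute $K_L$ explicitly, identify it with a Jacobi theta function, and then run the classical Riemann argument at the self-dual scale.

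\textbf{Step 1: the theta form of $K_L$.} In the translation-invariant case the eigenvalues of $\mathcal L_N$ are $-2a(1-\cos(2\pi k/N))$. Hence
\[
N\,p^{\mathrm{cyc}}_{s^2t}(0,0)=\sum_{k\in\Z/N\Z}\exp\bigl(-2as^2t(1-\cos(2\pi k/N))\bigr).
\]
With $N/s\to L$, dominated convergence (using $1-\cos x\ge 2x^2/\pi^2$ on $|x|\le\pi$) gives the limit
\[
K_L(t)=\sum_{k\in\Z}e^{-4\pi^2 D k^2 t/L^2},\qquad D=a.
\]
This is the theta-series form of Theorem~\ref{thm:theta-series-form-recall}. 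By Poisson summation it also equals $\frac{L}{\sqrt{4\pi Dt}}\sum_{m\in\Z}e^{-m^2L^2/(4Dt)}$. I would then fix the self-dual scale of Section~\ref{sec:completion-selfdual} so that $4\pi^2D/L^2=\pi$, i.e.\ $L^2=4\pi D$. At this scale $K_L(t)=\theta(t):=\sum_k e^{-\pi k^2 t}$, and the functional equation reads $\theta(1/t)=t^{1/2}\theta(t)$.

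\textbf{Step 2: the completion and its Mellin transform.} The function $K_L-1=2\omega(t)$, with $\omega(t)=\sum_{n\ge1}e^{-\pi n^2t}$, decays exponentially at $\infty$ but is of size $t^{-1/2}$ at $0$. The operator $\mathcal A$ of Section~\ref{sec:completion-selfdual} should be exactly the Riemann completion. Concretely, I expect $\widetilde K_{\mathrm{arch}}(t)$ to be the classical kernel
\[
\frac{d}{dt}\Bigl[t^{3/2}\omega'(t)\Bigr]\quad\text{(up to the half-density weight)},
\]
equivalently the operator $t\mapsto \tfrac{d}{dt}\,t^{3/2}\tfrac{d}{dt}$ applied to $\tfrac12(K_L-1)$. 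I would recall its definition from that section and check that it kills the singular Gaussian term from Definition~\ref{def:scaling-limit}. I would then use the standard identity
\[
\Xi(z)=\tfrac12 z(z-1)\pi^{-z/2}\Gamma(z/2)\zeta(z)=\tfrac12-\tfrac{z(1-z)}{2}\int_1^\infty \omega(t)\bigl(t^{z/2-1}+t^{(1-z)/2-1}\bigr)\,dt.
\]
After two integrations by parts, this becomes $\int_0^\infty \Phi_0(t)\,t^{z/2}\,\tfrac{dt}{t}$, where $\Phi_0$ is the twice-differentiated theta kernel. Matching normalizations, the Mellin variable scales as $z\mapsto 2z$, which gives $F_{\mathrm{arch}}(z)=\Xi(2z)$ in the stated domain. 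This is the content of Theorem~\ref{thm:arch-mellin-identification}.

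\textbf{Step 3: justifying the interchanges and continuing.} Termwise Mellin integration over $k$ for $\Re z$ large is justified by the domination estimates of Appendix~\ref{app:technical}. The identity then extends to all $z$ by analytic continuation, since the completed kernel decays rapidly at both $0$ (by the functional equation) and $\infty$.

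\textbf{Main obstacle.} The hard part is Step 2. I have to verify that $\mathcal A$ as defined produces a kernel that is rapidly decaying at $t\to0$, and I have to get the constants right ($\tfrac12$, $\pi^{-z/2}$, the factor $2$ in the argument). The boundary terms from the integrations by parts at $t=1$, and the contribution of the constant term $1$, must cancel exactly. So I would track them explicitly using $\theta(1/t)=t^{1/2}\theta(t)$ and its derivative at $t=1$.
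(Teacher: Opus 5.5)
Your route works, but it differs from the paper's. Your Step 1 also derives the theta-series form, which the paper only records.

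\textbf{The paper's route.} It applies $\mathcal A$ termwise to $K_L-1=2\sum_{n\ge1}e^{-\pi n^2t}$, giving the explicit kernel $\Theta(t)=\sum_{n\ge1}(2\pi^2n^4t^{3/2}-3\pi n^2t^{1/2})e^{-\pi n^2t}$. It then computes $M(s)=\int_0^\infty\Theta(t)t^{s-1}\,dt$ termwise from Gamma integrals on a right half-plane. There the identity $2\Gamma(s+\tfrac32)-3\Gamma(s+\tfrac12)=2(s-1)\Gamma(s+\tfrac12)$ produces the polynomial factor of $\xi$ directly, giving $M(s)=\xi(2s-1)$. Finally it continues analytically and sets $s=\tfrac34+iz$.

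\textbf{Your route.} You start from Riemann's representation of $\xi$ over $[1,\infty)$, which already encodes the functional equation, and integrate by parts twice. The constant $\tfrac12$ and the boundary terms at $t=1$ cancel because $\omega(1)+4\omega'(1)=-\tfrac12$, which is the derivative of $\vartheta(t)=t^{-1/2}\vartheta(1/t)$ at $t=1$. This leaves $\xi(w)=2\int_1^\infty(\mathcal A\omega)(t)\bigl(t^{(w-1)/2}+t^{-w/2}\bigr)\,dt$. To fold this onto $(0,\infty)$ you need $\Theta(1/t)=t^{3/2}\Theta(t)$. That follows from the theta relation, $\mathcal A1=0$, and $\mathcal A\bigl[t^{-1/2}h(1/t)\bigr](t)=t^{-3/2}(\mathcal Ah)(1/t)$.

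\textbf{What each buys.} The paper's computation is shorter and shows plainly where $w(w-1)$ comes from. However, its continuation step needs the Mellin integral to converge for every $s$; for real $z$ this already means $\Re s=\tfrac34$. The termwise bound $|\Theta(t)|\ll t^{-1}$ offered there only gives $\Re s>1$. The rapid decay of $\Theta$ at $0$ actually comes from the cancellation encoded in $\Theta(1/t)=t^{3/2}\Theta(t)$. Your route uses exactly this symmetry, so it gets that decay at no extra cost.

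\textbf{Two normalizations to fix.} The theorem is purely a normalization identity, so two of the normalizations you anticipated must be corrected. First, by the definition in the statement, $\widetilde K_{\mathrm{arch}}=\mathcal A(K_L-1)=2\mathcal A\omega$, not $\mathcal A\bigl(\tfrac12(K_L-1)\bigr)$. The factor $2$ in the formula above is supplied precisely by $K_L-1=2\omega$; with your normalization you would end at $\tfrac12\Xi(2z)$. Second, the function you display as $\Xi(z)$ is the paper's $\xi(z)$, while $\Xi(\zeta)=\xi(\tfrac12+i\zeta)$. After folding, $\xi(w)=\int_0^\infty\Theta(t)\,t^{(w+1)/2}\,\frac{dt}{t}$, so your half-density weight is $t^{1/2}$. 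Matching with $t^{3/4+iz}$ forces $w=\tfrac12+2iz$, hence $F_{\mathrm{arch}}(z)=\xi(\tfrac12+2iz)=\Xi(2z)$. A bare rescaling $z\mapsto2z$ in your displayed identity would instead give $\xi(2z)$, which is false.
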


The remainder of the paper constructs the kernel $\Phi$, proves Theorem~\ref{thm:main-assembled}, and isolates each logical seam explicitly.

\medskip

The paper is organized as follows. Section~\ref{sec:completion-selfdual} fixes the self-dual Archimedean normalization and records the resulting theta-series form of the scaling-limit trace. Section~\ref{sec:log-reflection} constructs the logarithmic kernel $\Phi$ and establishes the exact reflection symmetry for its bilateral Laplace transform. Section~\ref{sec:total-positivity} proves total positivity and derives the Schoenberg--Edrei--Karlin factorization $\mathcal B\Phi=E/\Psi$. Section~\ref{sec:arch-mellin-xi} identifies the Archimedean-completed Mellin transform with $\Xi(2\cdot)$ and records the Archimedean Mellin identification with $\Xi(2\cdot)$, while isolating the remaining identification problem (Section~\ref{sec:future-directions}). Section~\ref{sec:completion-ledger} assembles the argument and isolates the remaining seam. Section~\ref{sec:tp-details} provides a detailed proof of total positivity. Appendix~\ref{app:technical} collects analytic justifications for termwise operations and boundary terms.

\subsection{Map of the proof}\label{subsec:map-proof}
The argument is organized into four steps, each producing a new object from the previous one.
For quick reference we summarize the pipeline in Table~\ref{tab:proof-map}.

\begin{table}[h!]
\centering
\setlength{\arrayrulewidth}{0.8pt}
\renewcommand{\arraystretch}{1.25}

\begin{tabular}{|p{0.10\linewidth}|p{0.52\linewidth}|p{0.30\linewidth}|}
\hline
\textbf{Step} & \textbf{Input $\to$ Output} & \textbf{Main tool} \\
\hline
1 &
local dynamics $\to$ theta-series kernel $\widetilde K$ &
lift/periodize + ULCLT$^*$
+ dominated convergence
\\ \hline
2 &
$\widetilde K \to$ self-dual normalization and Archimedean completion $\mathcal A$ &
Jacobi inversion + completion operator
\\ \hline
3 &
$\widetilde K \to \Phi \to (\Psi,\ \mathcal B\Phi=E/\Psi)$ &
complete monotonicity $\Rightarrow \mathrm{PF}_\infty$ +
Schoenberg--Edrei--Karlin
\\ \hline
4 &
$\mathcal A(K_L-1)\to \Theta \to \Xi$ (anchor) and open bridge problem &
Archimedean completion + Mellin;
Comparison problem (open)
\\ \hline
\end{tabular}

\caption{Proof map (Steps~1--4). $^*$ULCLT = uniform local central limit theorem.}
\label{tab:proof-map}
\end{table}

The proof relies on a small number of canonical kernels and transforms; these are summarized in Table~\ref{tab:notation} with precise definitions and references.

\begin{table}[h!]
\centering
\setlength{\arrayrulewidth}{0.8pt}
\renewcommand{\arraystretch}{1.25}

\begin{tabular}{|p{0.22\linewidth}|p{0.70\linewidth}|}
\hline
\textbf{Symbol} & \textbf{Meaning} \\
\hline
$K_L(t)$ &
Scaling-limit trace
(Definition~\ref{def:scaling-limit}).
\\ \hline
$\widetilde K(t)$ &
Renormalized trace kernel
(Theorem~\ref{thm:theta-series-form-recall}).
\\ \hline
$\widetilde K_{\mathrm{sym}}(t)=t^{-1/2}\widetilde K(t)$ &
Symmetric half-density kernel
(Definition~\ref{def:half-density}).
\\ \hline
$\Phi(x)$ &
Logarithmic kernel
$\Phi(x)=e^{x/4}\,\widetilde K_{\mathrm{sym}}(e^x)$
(equation~\eqref{eq:Phi-def}).
\\ \hline
$\mathcal B\Phi(s)$ &
Bilateral Laplace transform
$\displaystyle \mathcal B\Phi(s)=\int_{\R}\Phi(x)e^{-s x}\,dx$
(on $\Re(s)>-\tfrac14$).
\\ \hline
$\Psi(s)$ &
Laguerre--P\'olya entire function with only real zeros satisfying
$\mathcal B\Phi(s)=\frac{E(s)}{\Psi(s)}$ on a strip
(Theorem~\ref{thm:SEK}).
\\ \hline
$F(z)$ &
Fourier transform of $\Phi$,
$\displaystyle F(z)=\int_{\R}\Phi(x)e^{izx}\,dx$;
boundary value $F(z)=\mathcal B\Phi(-iz)$ where both sides converge.
\\ \hline
$\mathcal A$ &
Archimedean completion operator
(Definition~\ref{def:arch-operator}).
\\ \hline
$\widetilde K_{\mathrm{arch}}$ &
Completed Archimedean kernel
$\widetilde K_{\mathrm{arch}}:=\mathcal A(K_L-1)$
(Definition~\ref{def:arch-theta-kernel}).
\\ \hline
$F_{\mathrm{arch}}(z)$ &
Mellin transform
$\displaystyle F_{\mathrm{arch}}(z)=\int_0^{\infty}
\widetilde K_{\mathrm{arch}}(t)\,
t^{\frac14+iz}\frac{dt}{t}$;
at the self-dual scale,
$F_{\mathrm{arch}}(z)=\Xi(2z)$
(Theorem~\ref{thm:arch-mellin-identification}).
\\ \hline
$\Psi_c(z)$ &
Centered Laguerre--P\'olya function
$\Psi_c(z):=\Psi(\tfrac14+iz)$. A central open problem is to compare $\Psi_c$ with the Mellin-side
function $\Xi(2z)$ up to multiplication by a zero-free entire factor (Section~\ref{sec:future-directions}).
\\ \hline
\end{tabular}

\caption{Notation used throughout the proof.}
\label{tab:notation}
\end{table}

\section{Acknowledgements}
The author would like to thank Krishnaswami Alladi, Tiziano Valentinuzzi and Kenneth Valpey for helpful discussions.

\section{Self-dual Archimedean completion and forced normalization}
\label{sec:completion-selfdual}

This section isolates the unique macroscopic normalization at which the scaling-limit trace admits a Jacobi-type inversion symmetry compatible with the Archimedean factor in the classical completed zeta function, and records the corresponding symmetry assumption for the centered kernel used in Step 2. The key point is that this self-dual scale is a normalization
of units in the scaling limit (not a tuning of the underlying discrete dynamics).

\subsection{The self-dual scale is forced by Jacobi inversion}

Recall that the macroscopic length $L>0$ enters through the scaling relation $N(s)/s\to L$
in Definition~\ref{def:scaling-limit}. Under the standing assumptions,
the scaling-limit trace admits the theta-series form :

\begin{theorem}[Theta-series form of the scaling-limit trace]
\label{thm:theta-series-form-recall}
For every $L>0$ and $t>0$,
\begin{equation}\label{eq:KL-theta-recall}
K_L(t)=\sum_{n\in\Z}\exp\!\Bigl(-\frac{4\pi^2D}{L^2}\,n^2\,t\Bigr).
\end{equation}
\end{theorem}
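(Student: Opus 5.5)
The plan is to compute $p^{\mathrm{cyc}}_t(0,0)$ exactly by Fourier analysis on $\Z/N\Z$ in the translation-invariant case $a_j\equiv a$ (Remark~\ref{rem:translation-invariant}), and then take the diffusive scaling limit termwise. The characters $\chi_k(j)=e^{2\pi i jk/N}$, $k\in\Z/N\Z$, diagonalize $\mathcal L_N$ with eigenvalues
\[
\lambda_k=-2a\bigl(1-\cos(2\pi k/N)\bigr)=-4a\sin^2(\pi k/N).
\]
This gives
\[
N\,p^{\mathrm{cyc}}_t(0,0)=\sum_{k=0}^{N-1}e^{-4a t\sin^2(\pi k/N)} .
\]
It is convenient to re-index $k$ over a symmetric window $-N/2<k\le N/2$. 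With $t$ replaced by $s^2t$ and $N=N(s)$, the summand becomes $\exp(-4a s^2 t\,\sin^2(\pi k/N))$. For fixed $k$, we have $s^2\sin^2(\pi k/N)\to \pi^2k^2/L^2$ because $N/s\to L$. Each term therefore converges to $\exp(-4\pi^2 a k^2 t/L^2)$, which is the $n=k$ term of \eqref{eq:KL-theta-recall} with $D=a$.

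The step that needs care is interchanging the limit with a sum whose number of terms grows. I would dominate using $\sin^2(\pi k/N)\ge 4k^2/N^2$ for $|k|\le N/2$, which follows from Jordan's inequality $\sin x\ge 2x/\pi$ on $[0,\pi/2]$. This gives the bound
\[
e^{-4as^2t\sin^2(\pi k/N)}\le \exp\bigl(-16 a t k^2 s^2/N^2\bigr)\le \exp(-c\,t\,k^2),
\]
with $c=8a/L^2$, valid for $s$ large since $s^2/N^2\to L^{-2}$. The bound is uniform in $s$ and summable over $k\in\Z$. Dominated convergence (on $\Z$ with counting measure, extending the summand by zero outside the window) then yields the claim. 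The same bound is uniform for $t$ in compact subsets of $(0,\infty)$, so it also gives the local uniformity asserted in Remark~\ref{rem:standing-assumptions}.

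Finally, I would identify $D=a$. This follows either from the variance $2at$ of the continuous-time walk, which matches the heat equation $\partial_t u=D\partial_x^2u$, or directly from the Taylor expansion $4a\sin^2(\pi k/N)\approx 4\pi^2 a k^2/N^2$. As a consistency check, Poisson summation rewrites the theta series as
\[
\frac{L}{\sqrt{4\pi D t}}\sum_{m}e^{-m^2L^2/(4Dt)},
\]
the periodized Gaussian. Its $m=0$ term is exactly the singular term subtracted in Definition~\ref{def:scaling-limit}. I expect the main obstacle to be only the uniform domination above; the rest is explicit.
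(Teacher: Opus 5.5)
Your proof is correct and complete in the translation-invariant setting. The eigenvalue computation, the symmetric re-indexing, the Jordan-inequality majorant $e^{-ctk^2}$ (uniform in $s$ once $s^2/N^2\ge 1/(2L^2)$, and uniform for $t$ in compacts) and the identification $D=a$ all check out.

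The paper itself never writes out a proof of this theorem. Remark~\ref{rem:standing-assumptions} says only that in this case the limit ``can be obtained explicitly by Fourier analysis on $\Z/N\Z$'', which is exactly what you carry out. The proof map (Table~\ref{tab:proof-map}, Step~1) and Appendix~\ref{app:ULCLT} instead name a different mechanism: lift/periodize $+$ uniform local CLT $+$ dominated convergence. That is a spatial, method-of-images argument. One writes $N p^{\mathrm{cyc}}_{s^2t}(0,0)=\sum_{m\in\Z}N\,p^{\Z}_{s^2t}(0,mN)$, replaces each image by its Gaussian approximation to reach the periodized Gaussian $\frac{L}{\sqrt{4\pi Dt}}\sum_m e^{-m^2L^2/(4Dt)}$ of \eqref{eq:Ktilde-theta}, and then applies Poisson summation to get the spectral series.

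Your spectral route buys an explicit summable majorant with no local CLT, no image-tail estimates and no Poisson step; in your write-up Poisson is only a consistency check linking the two forms. The image route buys robustness: it is the version that could extend to periodic or homogenized conductances (Remark~\ref{rem:translation-invariant}), where no explicit eigenbasis is available. It also needs more than the sup-norm bound of Lemma~\ref{lem:ULCLT}. After multiplying by $N\asymp s$, one must control $\sum_{|m|>M}N\,p^{\Z}_{s^2t}(0,mN)$ uniformly in $s$, for instance via a Gaussian upper bound on $p^{\Z}_\tau(0,x)$. Your argument sidesteps that entirely.
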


\begin{remark}[Harmless reparameterization to Jacobi's standard form]
\label{rem:self-dual-canonical}
Define
\begin{equation}\label{eq:tprime}
t' := \frac{4\pi D}{L^2}\,t.
\end{equation}
Then \eqref{eq:KL-theta-recall} becomes
\[
K_L(t)=\sum_{n\in\Z}e^{-\pi n^2 t'}=:\vartheta(t'),
\qquad
\vartheta(u):=\sum_{n\in\Z}e^{-\pi n^2 u}.
\]
Thus changing $L$ amounts to re-scaling the continuum time variable by the factor
$\frac{4\pi D}{L^2}$ so that the exponent takes Jacobi's standard form.
\end{remark}

The classical inversion symmetry of the Jacobi theta function is
\begin{equation}\label{eq:jacobi}
\vartheta(u)=u^{-1/2}\,\vartheta(u^{-1}),\qquad u>0.
\end{equation}
Our Mellin transform will be taken in the $t$-variable, so the natural notion of
self-duality is that \eqref{eq:jacobi} acts as $t\mapsto t^{-1}$ (not merely
as inversion of the auxiliary variable $t'$).

\begin{lemma}[Forced self-dual normalization]
\label{lem:forced-selfdual}
The following are equivalent:
\begin{enumerate}[label=(\alph*)]
\item The reparameterization \eqref{eq:tprime} satisfies $t'(t^{-1})=(t'(t))^{-1}$ for all $t>0$.
\item The inversion identity \eqref{eq:jacobi} induces an inversion symmetry for $K_L$ in the $t$-variable:
\begin{equation}\label{eq:KL-selfdual}
K_L(t)=t^{-1/2}K_L(t^{-1}),\qquad t>0.
\end{equation}
\item The macroscopic scale satisfies the self-dual relation
\begin{equation}\label{eq:selfdual}
L^2=4\pi D.
\end{equation}
\end{enumerate}
\end{lemma}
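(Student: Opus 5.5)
Set $c:=4\pi D/L^2>0$, so that \eqref{eq:tprime} reads $t'=ct$ and, by Remark~\ref{rem:self-dual-canonical}, $K_L(t)=\vartheta(ct)$. Each of (a), (b), (c) will be rewritten as a condition on $c$, and we will show that each holds exactly when $c=1$. Since $c=1$ is precisely $L^2=4\pi D$, that is condition (c), it suffices to prove (a)$\Leftrightarrow$(c) and (b)$\Leftrightarrow$(c).

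\emph{(a)$\Leftrightarrow$(c).} Condition (a) says $c\,t^{-1}=(ct)^{-1}$ for all $t>0$, which is equivalent to $c^2=1$. Since $c>0$, this means $c=1$.

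\emph{(c)$\Rightarrow$(b).} If $c=1$, then $K_L=\vartheta$, and \eqref{eq:KL-selfdual} is exactly Jacobi's identity \eqref{eq:jacobi}.

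\emph{(b)$\Rightarrow$(c).} This is the only step with content. Applying \eqref{eq:jacobi} at $u=ct$ gives
\[
K_L(t)=\vartheta(ct)=(ct)^{-1/2}\,\vartheta\bigl((ct)^{-1}\bigr).
\]
The right side of \eqref{eq:KL-selfdual} is $t^{-1/2}\vartheta(c/t)$. So (b) is equivalent to
\[
c^{-1/2}\,\vartheta\bigl(1/(ct)\bigr)=\vartheta(c/t)\qquad\text{for all } t>0.
\]
Substitute $v=1/t$. The identity becomes $c^{-1/2}\vartheta(v/c)=\vartheta(cv)$ for all $v>0$. Now let $v\to\infty$. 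Both $v/c$ and $cv$ tend to $\infty$, and $\vartheta(u)\to1$ as $u\to\infty$, since $\vartheta(u)-1=2\sum_{n\ge1}e^{-\pi n^2u}\le 2e^{-\pi u}/(1-e^{-\pi u})$. Passing to the limit gives $c^{-1/2}=1$, hence $c=1$.

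I expect this last step to be the only mild obstacle. The point is to use an asymptotic of $\vartheta$ to rule out $c\ne1$, rather than trying to compare the two theta functions term by term. The limit argument above is the simplest way to do this, and it also covers the $t\to0$ direction, since that corresponds to $v\to\infty$.
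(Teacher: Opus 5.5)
Your proof is correct, and for (a)$\Leftrightarrow$(c) and (c)$\Rightarrow$(b) it is the same as the paper's. The two differ in (b)$\Rightarrow$(c).

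The paper writes $K_L(t)=\vartheta(t')=(t')^{-1/2}\vartheta((t')^{-1})=(t')^{-1/2}\vartheta(t'(t^{-1}))=(t')^{-1/2}K_L(t^{-1})$. It then reads off from (b) that $t'=t$. But replacing $\vartheta((t')^{-1})$ by $\vartheta(t'(t^{-1}))$ uses $(t')^{-1}=t'(t^{-1})$, which is exactly condition (a). So that chain is valid only when $c=1$. It does not show that (b) by itself forces $c=1$, and the paper's argument in effect proves only (c)$\Rightarrow$(b).

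Your reduction of (b) to $c^{-1/2}\vartheta(v/c)=\vartheta(cv)$ for all $v>0$, followed by the limit $v\to\infty$ with $\vartheta(u)\to 1$, supplies the missing step. This is the only direction with real content, and you settle it by an asymptotic comparison rather than by formally matching arguments. The cost is one line of asymptotics; the gain is a complete equivalence.

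A minor point: your closing remark that the argument ``also covers the $t\to0$ direction'' is redundant. The limit $v\to\infty$ is the $t\to 0$ limit, and one limit suffices.
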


\begin{proof}
By \eqref{eq:tprime}, $t'(t)=\frac{4\pi D}{L^2}t$, so
\[
t'(t^{-1})=\frac{4\pi D}{L^2}\,t^{-1},
\qquad
(t'(t))^{-1}=\frac{L^2}{4\pi D}\,t^{-1}.
\]
Thus $t'(t^{-1})=(t'(t))^{-1}$ for all $t>0$ holds if and only if $L^2=4\pi D$,
proving (a)$\Leftrightarrow$(c).

Using Theorem~\ref{thm:theta-series-form-recall} and \eqref{eq:tprime} we have
$K_L(t)=\vartheta(t'(t))$. Applying \eqref{eq:jacobi} gives
\[
K_L(t)=\vartheta(t')=(t')^{-1/2}\vartheta((t')^{-1})
      =(t')^{-1/2}\vartheta(t'(t^{-1}))=(t')^{-1/2}K_L(t^{-1}).
\]
Hence $K_L(t)=t^{-1/2}K_L(t^{-1})$ for all $t>0$ holds exactly when $t' = t$, i.e.
when $L^2=4\pi D$. This proves (b)$\Leftrightarrow$(c).
\end{proof}

\begin{remark}[Interpretation]
\label{rem:selfdual-interpretation}
Condition \eqref{eq:selfdual} is the unique choice for which Jacobi inversion\cite{Jacobi1829} acts as
$t\mapsto t^{-1}$ in the variable of the Mellin transform used later. Equivalently,
it is the unique choice that matches the classical Archimedean completion of $\zeta$
under Mellin transform.
\end{remark}

\subsection{The Archimedean completion operator}

We now record the completion operator that removes the universal singular pieces.

\begin{definition}[Archimedean completion operator]
\label{def:arch-operator}
For a sufficiently smooth function $f:(0,\infty)\to\R$, define
\begin{equation}\label{eq:arch-operator}
(\mathcal A f)(t):=\frac{d}{dt}\Bigl(t^{3/2}\frac{d}{dt}f(t)\Bigr),\qquad t>0.
\end{equation}
\end{definition}

\begin{lemma}[The completion operator kills the singular pieces]
\label{lem:A-kills-singular}
One has \(\mathcal A(1)=0\) and \(\mathcal A(t^{-1/2})=0\).
\end{lemma}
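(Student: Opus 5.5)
The plan is to verify both identities by direct computation from Definition~\ref{def:arch-operator}, applying the two differentiations in \eqref{eq:arch-operator} in turn. Both functions are smooth on $(0,\infty)$, so $\mathcal A$ is defined on them and no analytic issues arise.

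For $f\equiv 1$ the inner derivative already vanishes: $\frac{d}{dt}1=0$, so $t^{3/2}\cdot 0=0$, and its derivative is $0$. Hence $\mathcal A(1)=0$.

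For $f(t)=t^{-1/2}$, first compute $\frac{d}{dt}t^{-1/2}=-\tfrac12 t^{-3/2}$. Multiplying by $t^{3/2}$ gives the constant $-\tfrac12$, whose derivative is zero, so $\mathcal A(t^{-1/2})=0$. The weight $t^{3/2}$ is chosen precisely to cancel $t^{-3/2}$ and turn the inner expression into a constant.

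I would then note, as a remark, why this is the relevant statement. By linearity, $\mathcal A$ annihilates every combination $\alpha+\beta t^{-1/2}$. Jacobi inversion \eqref{eq:KL-selfdual} shows these are exactly the singular terms of $K_L-1$ at $t\to\infty$ (the constant $n=0$ term, already removed) and at $t\to 0$ (the $t^{-1/2}$ term, which at the self-dual scale \eqref{eq:selfdual} matches $\frac{L}{\sqrt{4\pi D t}}=t^{-1/2}$ from Definition~\ref{def:scaling-limit}). So $\mathcal A(K_L-1)$ is insensitive to these pieces. There is no real obstacle here; the only step that needs care is the exponent arithmetic in the inner derivative.
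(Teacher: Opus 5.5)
Your computation is correct and matches the paper's proof: the inner derivative of $1$ vanishes, and $t^{3/2}\cdot(-\tfrac12 t^{-3/2})=-\tfrac12$ is constant, so the outer derivative is zero. Your closing remark about why these are the singular modes of $K_L-1$ is extra context and is not needed for the lemma.
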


\begin{proof}
For the constant function, \(\frac{d}{dt}1=0\). For \(t^{-1/2}\),
\[
\frac{d}{dt}t^{-1/2}=-\tfrac12 t^{-3/2},\qquad
t^{3/2}\frac{d}{dt}t^{-1/2}=-\tfrac12,
\]
whose derivative is again \(0\).
\end{proof}

\begin{definition}[Archimedean-completed kernel]
\label{def:arch-theta-kernel}
Let $K_L(t)$ be the scaling-limit trace. Define the Archimedean-completed kernel
\[
\widetilde K_{\mathrm{arch}}(t) :=\bigl(\mathcal{A}(K_L-1)\bigr)(t),\qquad t>0.
\]
\end{definition}

\subsection{Identification with the classical Theta at the self-dual scale}

\begin{lemma}[Exact identification of the Archimedean kernel with the classical $\Theta$]
\label{lem:arch-equals-Theta}
Assume the self-dual scale $L^2=4\pi D$. Then for all $t>0$,
\[
\widetilde K_{\mathrm{arch}}(t)=\Theta(t),
\]
where
\[
\Theta(t)=\sum_{n=1}^{\infty}\Bigl(2\pi^2 n^4 t^{3/2}-3\pi n^2 t^{1/2}\Bigr)e^{-\pi n^2 t}.
\]
\end{lemma}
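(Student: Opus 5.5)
The plan is a direct computation: make $K_L$ explicit at the self-dual scale, then apply $\mathcal A$ term by term. By Theorem~\ref{thm:theta-series-form-recall} and Remark~\ref{rem:self-dual-canonical}, the hypothesis $L^2=4\pi D$ makes the reparameterization \eqref{eq:tprime} the identity, $t'=t$. Hence
\[
K_L(t)=\vartheta(t)=1+2\sum_{n=1}^{\infty}e^{-\pi n^2 t},
\]
using the evenness of $n\mapsto e^{-\pi n^2 t}$ to fold the bilateral sum. So $K_L(t)-1=2\sum_{n\ge1}e^{-\pi n^2 t}$, and the constant term, which $\mathcal A$ would annihilate anyway by Lemma~\ref{lem:A-kills-singular}, has been removed explicitly.

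Next I would compute $\mathcal A$ on a single exponential. For $a>0$,
\[
\frac{d}{dt}e^{-at}=-a\,e^{-at},\qquad t^{3/2}\frac{d}{dt}e^{-at}=-a\,t^{3/2}e^{-at}.
\]
Differentiating once more gives
\[
\mathcal A(e^{-at})(t)=\bigl(a^2t^{3/2}-\tfrac32 a\,t^{1/2}\bigr)e^{-at}.
\]
Putting $a=\pi n^2$ and multiplying by $2$ yields $\bigl(2\pi^2n^4t^{3/2}-3\pi n^2t^{1/2}\bigr)e^{-\pi n^2 t}$. This is exactly the $n$-th summand of $\Theta(t)$, so summing over $n\ge1$ gives the identity, provided $\mathcal A$ may be applied termwise.

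The only genuine point is that termwise application, which amounts to differentiating the series twice under the sum. I would fix a compact interval $[\delta,T]\subset(0,\infty)$ and bound the first- and second-derivative series there. The termwise derivatives of $e^{-\pi n^2 t}$ and of $t^{3/2}\frac{d}{dt}e^{-\pi n^2t}$ are bounded in absolute value by $C_{\delta,T}\,(n^2+n^4)\,e^{-\pi n^2\delta}$, which is summable in $n$. By the Weierstrass M-test, the series of functions and of their first two derivatives converge uniformly on $[\delta,T]$. The standard theorem on differentiating uniformly convergent series then justifies both interchanges, in line with the domination estimates referenced in Appendix~\ref{app:technical}.

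Since $\delta$ and $T$ are arbitrary, the identity $\widetilde K_{\mathrm{arch}}(t)=\Theta(t)$ holds for every $t>0$. I expect no real obstacle: the M-test domination is routine, and the rest is the one-line computation of $\mathcal A(e^{-at})$.
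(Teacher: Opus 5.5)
Your proposal is correct and follows the paper's proof essentially line by line. Both arguments specialize the theta series at $L^2=4\pi D$ to $\vartheta(t)$, fold it to $K_L-1=2\sum_{n\ge1}e^{-\pi n^2 t}$, compute $\mathcal A(e^{-at})=\bigl(a^2t^{3/2}-\tfrac32 a\,t^{1/2}\bigr)e^{-at}$ with $a=\pi n^2$, and sum over $n$. The only difference is that you make the termwise differentiation explicit via the Weierstrass M-test on compact intervals $[\delta,T]$, whereas the paper simply invokes absolute and locally uniform convergence.
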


\begin{proof}
By Theorem~\ref{thm:theta-series-form-recall} and \eqref{eq:selfdual}, under $L^2=4\pi D$ we have
\[
K_L(t)=\sum_{n\in\Z}e^{-\pi n^2 t}=:\vartheta(t).
\]
Hence $K_L(t)-1=2\sum_{n\ge 1}e^{-\pi n^2 t}$ and, applying the operator
$\mathcal{A}=\frac{d}{dt}\bigl(t^{3/2}\frac{d}{dt}\bigr)$ term-by-term (justified by absolute and
locally uniform convergence for $t>0$),
\[
\widetilde K_{\mathrm{arch}}(t)=\mathcal{A}(K_L-1)(t)
=2\sum_{n\ge 1}\frac{d}{dt}\Bigl(t^{3/2}\frac{d}{dt}e^{-\pi n^2 t}\Bigr).
\]
For each $n\ge 1$,
\[
\frac{d}{dt}\Bigl(t^{3/2}\frac{d}{dt}e^{-\pi n^2 t}\Bigr)
=\Bigl(\pi^2 n^4 t^{3/2}-\tfrac{3}{2}\pi n^2 t^{1/2}\Bigr)e^{-\pi n^2 t},
\]
and multiplying by $2$ yields the claimed series for $\Theta(t)$.
\end{proof}

\begin{remark}[Role in the proof]
\label{rem:Theta-role}
Lemma~\ref{lem:forced-selfdual} isolates the unique macroscopic normalization compatible with
inversion symmetry in the Mellin variable used later. Lemma~\ref{lem:arch-equals-Theta} then
provides the exact Archimedean kernel whose Mellin transform will be identified with the
Riemann $\Xi$-function in Section~\ref{sec:arch-mellin-xi}.
\end{remark}

%%%%%%%%%%%%%%%%%%%%%%%%%%%%%%%%%%%%%%%%%%%%%%%%%%%%%%%%%%%%%%%%%%%%%%%%%%%%%%%
% Section 3: Logarithmic kernel and reflection symmetry (LB-2)
%%%%%%%%%%%%%%%%%%%%%%%%%%%%%%%%%%%%%%%%%%%%%%%%%%%%%%%%%%%%%%%%%%%%%%%%%%%%%%%

\section{Logarithmic kernel and exact reflection symmetry}
\label{sec:log-reflection}

This section is the ``engine'' that transports the \emph{self-duality} symmetry of the
completed trace kernel into an \emph{exact reflection law} for a Fourier/Laplace transform.
Everything here is canonical and is proved directly from (i) the theta-series representation
of the completed kernel and (ii) the inversion symmetry fixed in the previous section.

Throughout, we work in the self-dual normalization established earlier, so that the
half-density kernel satisfies the exact inversion symmetry

\begin{remark}[Standing symmetry assumption for Step 2]\label{rem:standing-symmetry}
The Jacobi inversion symmetry for the \emph{scaling-limit trace} $K_L$ at the self-dual scale
is proved in this section (see Proposition~\ref{lem:forced-selfdual} below).
In Step~2 we work with the centered kernel $\widetilde K$ and its half--density normalization
$\widetilde K_{\mathrm{sym}}(t)=t^{-1/2}\widetilde K(t)$. For the arguments of Section~\ref{sec:log-reflection}
we will use the following exact inversion symmetry of $\widetilde K_{\mathrm{sym}}$:
\end{remark}

\begin{equation}
\label{eq:Ksym-selfdual}
\widetilde K_{\mathrm{sym}}(t)=\widetilde K_{\mathrm{sym}}(t^{-1}),\qquad t>0.
\end{equation}

\begin{remark}
For clarity: Proposition~\ref{lem:forced-selfdual} proves the modular inversion symmetry for $K_L$.
The verification of \eqref{eq:Ksym-selfdual} for the centered kernel $\widetilde K$ is the first
place where the later ``bridge'' analysis enters; in the present paper we record \eqref{eq:Ksym-selfdual}
as a standing symmetry assumption for Step~2 so that all subsequent deductions are logically conditional on it.
\end{remark}

where (Definition~\ref{def:half-density}) $\widetilde K_{\mathrm{sym}}(t):=t^{-1/2}\widetilde K(t)$.

\subsection{Definition of the logarithmic kernel and an exact mixture formula}
\label{subsec:phi-def}

We begin from the theta-series representation of the (scaling-limit) completed trace kernel,
which we record here for ease of reference:
\begin{equation}
\label{eq:Ktilde-theta}
\widetilde K(t)
=
\frac{L}{\sqrt{4\pi D\,t}}
\sum_{m\in\mathbb Z\setminus\{0\}}
\exp\!\left(-\frac{m^2L^2}{4D\,t}\right),
\qquad t>0.
\end{equation}
(Here $D>0$ is the effective diffusion constant and $L>0$ is the macroscopic cycle length.)

\medskip

\begin{definition}[Logarithmic kernel]
\label{def:Phi}
Define $\Phi:\mathbb R\to[0,\infty)$ by the logarithmic change of variables
\begin{equation}
\label{eq:Phi-def}
\Phi(x)
:=
e^{x/4}\,\widetilde K_{\mathrm{sym}}(e^{x})
=
e^{-x/4}\,\widetilde K(e^{x}),
\qquad x\in\mathbb R.
\end{equation}
\end{definition}

\begin{remark}[Why a logarithmic coordinate? why the exponent $1/4$?]
Write $t=e^{x}$. Then multiplicative scaling in $t$ becomes translation in $x$, and the
self-duality involution $t\mapsto t^{-1}$ becomes the reflection $x\mapsto -x$.
The extra factor $e^{x/4}$ is not a cosmetic choice: it is precisely the choice that
centers the Laplace reflection at $\Re(s)=\tfrac14$ (see \S\ref{subsec:laplace-reflection}).
More generally, if one defines $\Phi_\sigma(x):=e^{\sigma x}\widetilde K_{\mathrm{sym}}(e^x)$,
then the same calculation shows $\mathcal B\Phi_\sigma(s)=\mathcal B\Phi_\sigma(2\sigma-s)$
on the common domain of convergence. Thus the ``symmetry center'' is exactly $\sigma$; the
choice $\sigma=\tfrac14$ is the one compatible with the Mellin line used later.
\end{remark}

\begin{lemma}[Exact exponential-mixture form]
\label{lem:Phi-series}
Let
\[
\alpha:=\frac{L^2}{4D}>0.
\]
Then for every $x\in\mathbb R$ one has the exact identity
\begin{equation}
\label{eq:Phi-series}
\Phi(x)
=
\frac{L}{\sqrt{4\pi D}}\,
e^{-3x/4}
\sum_{m\in\mathbb Z\setminus\{0\}}
\exp\!\big(-\alpha m^2 e^{-x}\big).
\end{equation}
In particular, $\Phi$ is nonnegative and smooth on $\mathbb R$.
\end{lemma}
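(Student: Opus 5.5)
This is a direct substitution: insert $t=e^x$ into the theta-series form \eqref{eq:Ktilde-theta} of $\widetilde K$ and then use the second expression in Definition~\ref{def:Phi}, namely $\Phi(x)=e^{-x/4}\widetilde K(e^x)$.

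First I check that the two expressions in \eqref{eq:Phi-def} agree. By Definition~\ref{def:half-density},
\[
e^{x/4}\widetilde K_{\mathrm{sym}}(e^x)=e^{x/4}e^{-x/2}\widetilde K(e^x)=e^{-x/4}\widetilde K(e^x).
\]
Next, setting $t=e^x$ in \eqref{eq:Ktilde-theta} gives the prefactor
\[
\frac{L}{\sqrt{4\pi D}}\,e^{-x/2}
\]
and the exponent
\[
-\frac{m^2L^2}{4D}e^{-x}=-\alpha m^2 e^{-x},
\]
with $\alpha=L^2/(4D)$. Multiplying by $e^{-x/4}$ combines the powers as $e^{-x/4}e^{-x/2}=e^{-3x/4}$, which is exactly \eqref{eq:Phi-series}.

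For the qualitative claims, every term in \eqref{eq:Phi-series} is positive, so $\Phi\ge 0$; in fact $\Phi>0$. For smoothness, fix a compact interval $x\in[a,b]$. Then $e^{-x}\ge e^{-b}$, so each term is bounded by $e^{-\alpha m^2 e^{-b}}$, which is summable over $m$. Differentiating a term $k$ times produces a polynomial in $m^2e^{-x}$ times the same exponential. This factor is still dominated by $C_k\,m^{2k}e^{-\alpha m^2 e^{-b}}$, which is also summable. Hence all differentiated series converge uniformly on compacts, termwise differentiation is justified, and $\Phi\in C^\infty(\mathbb R)$. This matches the domination estimates of Appendix~\ref{app:technical}.

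There is no real obstacle here. The only point needing care is the uniform domination used for smoothness, and this fails only as $x\to+\infty$, where $e^{-x}\to 0$. Since we work on compact intervals, the lower bound $e^{-x}\ge e^{-b}$ suffices.
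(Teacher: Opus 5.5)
Your proposal is correct and matches the paper's proof: substitute $t=e^x$ into \eqref{eq:Ktilde-theta}, use $\Phi(x)=e^{-x/4}\widetilde K(e^x)$, and combine $e^{-x/4}e^{-x/2}=e^{-3x/4}$. Your smoothness argument is more careful than the paper's, because you dominate every $x$-derivative of the series on compact intervals, whereas the paper only asserts locally uniform convergence of the series itself before concluding smoothness.
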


\begin{proof}
Substitute $t=e^{x}$ into \eqref{eq:Ktilde-theta} and use \eqref{eq:Phi-def}:
\[
\Phi(x)=e^{-x/4}\widetilde K(e^x)
=
e^{-x/4}\frac{L}{\sqrt{4\pi D\,e^x}}
\sum_{m\neq 0}\exp\!\left(-\frac{m^2L^2}{4D}\,e^{-x}\right),
\]
which simplifies to \eqref{eq:Phi-series}. The series in \eqref{eq:Phi-series} converges
absolutely and locally uniformly in $x$ (dominated, for example, by the first term $m=\pm1$
together with geometric decay for $|m|\to\infty$), hence defines a smooth function.
\end{proof}

\subsection{Quantitative tails and integrability}
\label{subsec:phi-tails}

The next lemma is the minimal analytic input needed later: it pins down the precise
one-sided decay that governs the Laplace domain of convergence.

\begin{lemma}[Integrability of the logarithmic kernel]
\label{lem:Phi-L1-new}
There exists a constant $C>0$ (depending only on $L$ and $D$) such that
\begin{equation}
\label{eq:Phi-tail-plus-new}
\Phi(x)\le C\,e^{-x/4}\qquad\text{for all }x\ge 0,
\end{equation}
and there exists $c>0$ such that
\begin{equation}
\label{eq:Phi-tail-minus-new}
\Phi(x)\le C\,e^{-3x/4}\exp\!\big(-c\,e^{-x}\big)\qquad\text{for all }x\le 0.
\end{equation}
In particular, $\Phi\in L^1(\mathbb R)$.
\end{lemma}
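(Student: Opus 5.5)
The plan is to work directly from the exact exponential-mixture formula \eqref{eq:Phi-series} of Lemma~\ref{lem:Phi-series}, writing $A:=L/\sqrt{4\pi D}$, $\alpha=L^2/(4D)$ and $u:=e^{-x}>0$, so that
\[
\Phi(x)=A\,e^{-3x/4}\,S(u),\qquad S(u):=\sum_{m\in\mathbb Z\setminus\{0\}}e^{-\alpha m^2 u}=2\sum_{m\ge1}e^{-\alpha m^2 u}.
\]
The two tails correspond to the two regimes $u\le 1$ (i.e. $x\ge0$) and $u\ge1$ (i.e. $x\le0$). In each regime I will bound $S(u)$ by an elementary comparison, and then read off \eqref{eq:Phi-tail-plus-new} and \eqref{eq:Phi-tail-minus-new}.

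\emph{Regime $x\le0$ ($u\ge1$).} Since $m^2\ge m$ for $m\ge1$, we have $S(u)\le 2\sum_{m\ge1}e^{-\alpha m u}$. Summing the geometric series gives
\[
S(u)\le \frac{2e^{-\alpha u}}{1-e^{-\alpha u}}\le \frac{2}{1-e^{-\alpha}}\,e^{-\alpha u}.
\]
This yields \eqref{eq:Phi-tail-minus-new} with $c=\alpha$ and any $C\ge 2A/(1-e^{-\alpha})$.

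\emph{Regime $x\ge0$ ($u\in(0,1]$).} Here the sum is not dominated by its first term, because many terms contribute as $u\to0$. Instead I will use an integral comparison. The map $y\mapsto e^{-\alpha y^2u}$ is decreasing on $[0,\infty)$, so
\[
\sum_{m\ge1}e^{-\alpha m^2u}\le\int_0^\infty e^{-\alpha y^2u}\,dy=\tfrac12\sqrt{\pi/(\alpha u)}.
\]
Hence $S(u)\le\sqrt{\pi/\alpha}\,e^{x/2}$, and therefore $\Phi(x)\le A\sqrt{\pi/\alpha}\,e^{-3x/4+x/2}=A\sqrt{\pi/\alpha}\,e^{-x/4}$. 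Taking $C$ to be the larger of the two constants gives both inequalities with a single $C$ depending only on $L$ and $D$. The main point requiring care is precisely this $x\ge0$ step: recognizing that the $e^{x/2}$ growth of the theta sum (the Jacobi-type small-$u$ behaviour) partially cancels the prefactor $e^{-3x/4}$. That cancellation is why the decay rate on the right is only $e^{-x/4}$, and it is what will later fix the Laplace domain $\Re(s)>-\tfrac14$.

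\emph{Integrability.} By Lemma~\ref{lem:Phi-series}, $\Phi$ is nonnegative and continuous. On $[0,\infty)$ it is bounded by the integrable function $Ce^{-x/4}$. On $(-\infty,0]$ we have $e^{-3x/4}\exp(-\alpha e^{-x})=u^{3/4}e^{-\alpha u}$, which decays doubly exponentially as $x\to-\infty$ and is in particular bounded by a constant times $e^{x}$. Hence $\int_{\mathbb R}\Phi<\infty$, i.e. $\Phi\in L^1(\mathbb R)$.
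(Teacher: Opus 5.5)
Your proof is correct and follows the paper's argument essentially step for step: the geometric-series bound via $m^2\ge m$ gives the $x\le 0$ tail with $c=\alpha$, and a Gaussian integral comparison gives the $x\ge 0$ tail. The differences are cosmetic. Comparing $\sum_{m\ge1}$ with $\int_0^\infty$ for the decreasing summand removes the paper's extra ``$1+$'' term, so your bound $\Phi(x)\le A\sqrt{\pi/\alpha}\,e^{-x/4}$ in fact holds for all $x$. For integrability of the negative tail you dominate by a multiple of $e^{x}$, where the paper substitutes $y=e^{-x}$.
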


\begin{proof}
Start from \eqref{eq:Phi-series}.

\smallskip
\noindent\textbf{(i) The $x\ge0$ tail.}
Let $x\ge 0$ and set $\beta:=\alpha e^{-x}\in(0,\alpha]$. Using the elementary bound
\[
\sum_{m\in\mathbb Z} e^{-\beta m^2}
\le
1+\int_{\mathbb R} e^{-\beta t^2}\,dt
=
1+\sqrt{\frac{\pi}{\beta}},
\]
we obtain
\[
\sum_{m\in\mathbb Z\setminus\{0\}} e^{-\alpha m^2 e^{-x}}
\le
\sum_{m\in\mathbb Z} e^{-\beta m^2}
\le
1+\sqrt{\frac{\pi}{\alpha}}\,e^{x/2}.
\]
Substituting into \eqref{eq:Phi-series} gives
\[
\Phi(x)
\le
\frac{L}{\sqrt{4\pi D}}\,e^{-3x/4}\Big(1+\sqrt{\frac{\pi}{\alpha}}\,e^{x/2}\Big)
\le
C\,(e^{-3x/4}+e^{-x/4})
\le
C\,e^{-x/4},
\]
which proves \eqref{eq:Phi-tail-plus-new}.

\smallskip
\noindent\textbf{(ii) The $x\le0$ tail.}
Let $x\le 0$, so $u:=e^{-x}\ge 1$. Since $m^2\ge m$ for all integers $m\ge1$,
\[
\sum_{m\in\mathbb Z\setminus\{0\}} e^{-\alpha m^2 u}
=
2\sum_{m=1}^\infty e^{-\alpha m^2 u}
\le
2\sum_{m=1}^\infty e^{-\alpha m u}
=
\frac{2e^{-\alpha u}}{1-e^{-\alpha u}}
\le
\frac{2e^{-\alpha u}}{1-e^{-\alpha}}.
\]
Substituting into \eqref{eq:Phi-series} yields
\[
\Phi(x)\le C\,e^{-3x/4}e^{-\alpha e^{-x}},
\]
which is \eqref{eq:Phi-tail-minus-new} (take $c=\alpha$).

\smallskip
\noindent\textbf{(iii) Integrability.}
The bound \eqref{eq:Phi-tail-plus-new} implies $\int_0^\infty \Phi(x)\,dx<\infty$.
For the negative tail, \eqref{eq:Phi-tail-minus-new} gives
\[
\int_{-\infty}^0 \Phi(x)\,dx
\le
C\int_{-\infty}^0 e^{-3x/4}e^{-c e^{-x}}\,dx.
\]
With the substitution $y=e^{-x}$ (so $y\in[1,\infty)$ and $dx=-dy/y$), this becomes
\[
C\int_1^\infty y^{3/4}e^{-cy}\,\frac{dy}{y}
=
C\int_1^\infty y^{-1/4}e^{-cy}\,dy
<
\infty.
\]
Thus $\Phi\in L^1(\mathbb R)$.
\end{proof}

\subsection{Bilateral Laplace transform: convergence and analyticity}
\label{subsec:bilaplace}

\begin{definition}[Bilateral Laplace transform]
\label{def:bilaplace}
For $s\in\mathbb C$ in the region where the integral converges, define
\[
\mathcal B\Phi(s)
:=
\int_{\mathbb R}\Phi(x)e^{-s x}\,dx.
\]
\end{definition}

\begin{lemma}[A half-plane of absolute convergence]
\label{lem:bilaplace-halfplane}
The integral defining $\mathcal B\Phi(s)$ converges absolutely for all $s\in\mathbb C$ with
\[
\Re(s)>-\tfrac14.
\]
On this half-plane, $\mathcal B\Phi$ is analytic.
\end{lemma}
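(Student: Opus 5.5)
The plan is to bound $|\Phi(x)e^{-sx}| = \Phi(x)e^{-\sigma x}$, where $\sigma:=\Re(s)$, using the two one-sided tail estimates of Lemma~\ref{lem:Phi-L1-new}. Integrability then follows by comparison, and analyticity follows from a standard theorem on holomorphic parameter integrals, such as Morera combined with Fubini, or differentiation under the integral sign. We split $\mathbb R$ into $x\ge0$ and $x\le0$ and treat each half-line separately.

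On $x\ge 0$, estimate \eqref{eq:Phi-tail-plus-new} gives
\[
\Phi(x)e^{-\sigma x}\le C e^{-(\sigma+1/4)x}.
\]
This is integrable on $[0,\infty)$ exactly when $\sigma>-\tfrac14$, and this is where the half-plane condition comes from.

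On $x\le0$, estimate \eqref{eq:Phi-tail-minus-new} gives
\[
\Phi(x)e^{-\sigma x}\le C e^{-(3/4+\sigma)x}\exp(-c\,e^{-x}).
\]
The substitution $y=e^{-x}$ turns this into $C\int_1^\infty y^{\sigma-1/4}e^{-cy}\,dy$, which is finite for every real $\sigma$ because of the double-exponential decay. The negative half-line therefore imposes no restriction on $s$. Combining the two halves gives absolute convergence for $\Re(s)>-\tfrac14$.

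For analyticity, fix a compact set $K\subset\{\Re(s)>-\tfrac14\}$ and let $\sigma_0:=\min_K\Re(s)>-\tfrac14$ and $\sigma_1:=\max_K\Re(s)$. The single integrable majorant
\[
G(x):=C e^{-(\sigma_0+1/4)x}\ \ (x\ge0),\qquad G(x):=Ce^{-(3/4+\sigma_1)x}e^{-ce^{-x}}\ \ (x\le 0),
\]
dominates $|\Phi(x)e^{-sx}|$ uniformly for $s\in K$. For each fixed $x$ the integrand $s\mapsto \Phi(x)e^{-sx}$ is entire. Dominated convergence then gives continuity of $\mathcal B\Phi$ on $K$, and Fubini gives $\oint_\gamma \mathcal B\Phi(s)\,ds=0$ for every triangle $\gamma\subset K$. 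Morera's theorem therefore yields analyticity on the open half-plane.

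There is no genuine obstacle here. The only point needing care is that the majorant on the negative half-line is uniform on compacts. This holds because the factor $\exp(-ce^{-x})$ beats any exponential $e^{-(3/4+\sigma_1)x}$, so the majorant is integrable with $\sigma_1$ fixed.
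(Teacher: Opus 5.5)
Your proposal is correct and follows essentially the same route as the paper: you split at $0$, apply the two tail bounds of Lemma~\ref{lem:Phi-L1-new}, and build a majorant on compact subsets that is uniform in $s$. The only difference is cosmetic: you get holomorphy from Morera plus Fubini, where the paper differentiates under the integral sign, and the two are equivalent given the same dominating function.
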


\begin{proof}
Fix $s$ with $\Re(s)>-\tfrac14$. Split the integral at $0$.

For $x\ge0$, Lemma~\ref{lem:Phi-L1-new} gives $\Phi(x)\le C e^{-x/4}$, hence
\[
\int_0^\infty |\Phi(x)e^{-sx}|\,dx
\le
C\int_0^\infty e^{-(\Re(s)+1/4)x}\,dx
<
\infty.
\]
For $x\le0$, Lemma~\ref{lem:Phi-L1-new} gives super-exponential decay
$\Phi(x)\le C e^{-3x/4}e^{-c e^{-x}}$, which dominates $e^{-\Re(s)x}$ for every fixed $s$;
thus $\int_{-\infty}^0 |\Phi(x)e^{-sx}|\,dx<\infty$ as well.

To see analyticity, let $K$ be a compact subset of $\{s:\Re(s)>-\tfrac14\}$ and choose
$\delta>0$ such that $\Re(s)\ge -\tfrac14+\delta$ for all $s\in K$. Then the integrand
$\Phi(x)e^{-sx}$ is dominated on $x\ge0$ by $C e^{-\delta x}$ and on $x\le0$ by an integrable
function independent of $s\in K$. Standard dominated-convergence arguments justify
termwise differentiation under the integral, implying holomorphy on this half-plane.
\end{proof}

\subsection{Self-duality transport: twisted symmetry of Phi}
\label{subsec:twisted-symmetry}

The inversion symmetry \eqref{eq:Ksym-selfdual} becomes a \emph{twisted evenness} after passing
to logarithmic coordinates.

\begin{lemma}[Twisted symmetry]
\label{lem:Phi-twisted}
Assume \eqref{eq:Ksym-selfdual}. Then for all $x\in\mathbb R$,
\begin{equation}
\label{eq:Phi-twisted}
\Phi(-x)=e^{-x/2}\,\Phi(x).
\end{equation}
\end{lemma}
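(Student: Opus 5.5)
The identity follows directly from Definition~\ref{def:Phi} together with the standing symmetry \eqref{eq:Ksym-selfdual}; no series manipulation is needed. The plan is to express both $\Phi(x)$ and $\Phi(-x)$ through the half-density kernel $\widetilde K_{\mathrm{sym}}$, evaluated at $e^{x}$ and $e^{-x}$ respectively, and then use the inversion symmetry to identify the two values.

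\textbf{Step 1: rewrite $\Phi(-x)$.} Fix $x\in\mathbb R$. By \eqref{eq:Phi-def},
\[
\Phi(-x)=e^{-x/4}\,\widetilde K_{\mathrm{sym}}(e^{-x}).
\]

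\textbf{Step 2: apply the inversion symmetry.} Apply \eqref{eq:Ksym-selfdual} with $t=e^{x}>0$. This gives $\widetilde K_{\mathrm{sym}}(e^{-x})=\widetilde K_{\mathrm{sym}}(e^{x})$, so
\[
\Phi(-x)=e^{-x/4}\,\widetilde K_{\mathrm{sym}}(e^{x}).
\]

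\textbf{Step 3: compare with $\Phi(x)$.} By \eqref{eq:Phi-def} again, $\widetilde K_{\mathrm{sym}}(e^{x})=e^{-x/4}\Phi(x)$. Substituting into Step 2 yields
\[
\Phi(-x)=e^{-x/4}\cdot e^{-x/4}\,\Phi(x)=e^{-x/2}\,\Phi(x),
\]
which is \eqref{eq:Phi-twisted}.

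\textbf{Where care is needed.} The only point requiring attention is the bookkeeping of the two exponential weights. The factor $e^{x/4}$ in the definition of $\Phi$ becomes $e^{-x/4}$ under $x\mapsto -x$, so the two weights do not cancel but combine into $e^{-x/2}$. This matches the remark that the symmetry center sits at $\sigma=\tfrac14$.

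As a consistency check, one can verify the identity with the alternative form $\Phi(x)=e^{-x/4}\widetilde K(e^{x})$, which by \eqref{eq:Ksym-selfdual} is equivalent to $\widetilde K(t^{-1})=t^{-1/2}\widetilde K(t)$. The lemma is logically conditional on the standing assumption \eqref{eq:Ksym-selfdual}, exactly as the remark preceding it states. In particular, I would not attempt to verify that assumption from the theta series \eqref{eq:Ktilde-theta}, since that verification is deferred to the bridge analysis.
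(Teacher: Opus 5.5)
Your proof is correct and is the paper's argument: rewrite $\Phi(-x)$ via \eqref{eq:Phi-def}, apply \eqref{eq:Ksym-selfdual} at $t=e^{x}$, and combine the two weights $e^{-x/4}\cdot e^{-x/4}$ into $e^{-x/2}$. One slip in your side consistency check, which does not affect the proof: since $\widetilde K_{\mathrm{sym}}(t)=t^{-1/2}\widetilde K(t)$, the assumption \eqref{eq:Ksym-selfdual} is equivalent to $\widetilde K(t^{-1})=t^{-1}\,\widetilde K(t)$, not to $\widetilde K(t^{-1})=t^{-1/2}\,\widetilde K(t)$.
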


\begin{proof}
By definition \eqref{eq:Phi-def} and the symmetry \eqref{eq:Ksym-selfdual},
\[
\Phi(-x)
=
e^{-x/4}\,\widetilde K_{\mathrm{sym}}(e^{-x})
=
e^{-x/4}\,\widetilde K_{\mathrm{sym}}(e^{x})
=
e^{-x/2}\,\big(e^{x/4}\widetilde K_{\mathrm{sym}}(e^{x})\big)
=
e^{-x/2}\Phi(x).
\]
\end{proof}

\subsection{Exact Laplace reflection symmetry}
\label{subsec:laplace-reflection}

We now convert \eqref{eq:Phi-twisted} into a rigid functional equation for $\mathcal B\Phi$.

\begin{proposition}[Reflection law for the Step-2 transform]
\label{prop:bilaplace-reflection}
Assume \eqref{eq:Ksym-selfdual}. Then for every $s\in\mathbb C$ with
\[
-\tfrac14<\Re(s)<\tfrac34
\]
(one precisely needs absolute convergence of both sides), one has the exact identity
\begin{equation}
\label{eq:bilaplace-reflection}
\mathcal B\Phi(s)=\mathcal B\Phi\!\left(\tfrac12-s\right).
\end{equation}
Consequently, for all $t\in\mathbb R$ one has
\begin{equation}
\label{eq:bilaplace-real-on-centerline}
\mathcal B\Phi\!\left(\tfrac14+it\right)\in\mathbb R.
\end{equation}
\end{proposition}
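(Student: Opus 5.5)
The plan is to derive \eqref{eq:bilaplace-reflection} from the twisted symmetry of Lemma~\ref{lem:Phi-twisted} by a single reflection $x\mapsto -x$ in the integral defining $\mathcal B\Phi$. First I would fix the domain. By Lemma~\ref{lem:bilaplace-halfplane}, $\mathcal B\Phi(s)$ converges absolutely for $\Re(s)>-\tfrac14$. The map $s\mapsto \tfrac12-s$ sends $\Re(s)<\tfrac34$ to $\Re(\tfrac12-s)>-\tfrac14$, so on the strip $-\tfrac14<\Re(s)<\tfrac34$ both $\mathcal B\Phi(s)$ and $\mathcal B\Phi(\tfrac12-s)$ are given by absolutely convergent integrals. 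The strip is nonempty and symmetric about $\Re(s)=\tfrac14$.

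For the identity itself, fix $s$ in the strip and substitute $x\mapsto -x$:
\[
\mathcal B\Phi(s)=\int_{\mathbb R}\Phi(-x)e^{sx}\,dx .
\]
Then insert \eqref{eq:Phi-twisted}, $\Phi(-x)=e^{-x/2}\Phi(x)$, to get
\[
\int_{\mathbb R}\Phi(x)e^{-(\frac12-s)x}\,dx=\mathcal B\Phi\!\left(\tfrac12-s\right).
\]
Absolute convergence of both sides makes the change of variables legitimate, since Lebesgue measure is reflection invariant and both integrands are absolutely integrable. No Fubini or termwise interchange is needed. If one prefers, the identity can also be extended by analytic continuation, because both sides are holomorphic on the strip by Lemma~\ref{lem:bilaplace-halfplane}.

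For the reality statement \eqref{eq:bilaplace-real-on-centerline}, take $s=\tfrac14+it$, which lies in the strip. Then $\tfrac12-s=\tfrac14-it=\overline{s}$, so the reflection law gives $\mathcal B\Phi(s)=\mathcal B\Phi(\overline s)$. Since $\Phi$ is real-valued (indeed nonnegative, by Lemma~\ref{lem:Phi-series}), we have $\mathcal B\Phi(\overline s)=\overline{\mathcal B\Phi(s)}$ directly from the integral. Hence $\mathcal B\Phi(s)=\overline{\mathcal B\Phi(s)}$, so it is real.

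The only genuine issue is the domain bookkeeping: both $s$ and $\tfrac12-s$ must lie in the half-plane of Lemma~\ref{lem:bilaplace-halfplane}, which is exactly why the statement restricts to $-\tfrac14<\Re(s)<\tfrac34$. The substantive input is the standing assumption \eqref{eq:Ksym-selfdual}, which enters only through Lemma~\ref{lem:Phi-twisted}. I would note explicitly that the conclusion is conditional on that assumption.
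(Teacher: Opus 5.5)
Your proposal is correct and follows the paper's proof essentially step for step. It uses the same check that both $s$ and $\tfrac12-s$ lie in the half-plane $\Re(\cdot)>-\tfrac14$ of Lemma~\ref{lem:bilaplace-halfplane}, the same substitution $x\mapsto -x$ combined with $\Phi(-x)=e^{-x/2}\Phi(x)$ from Lemma~\ref{lem:Phi-twisted}, and the same conjugation identity $\overline{\mathcal B\Phi(s)}=\mathcal B\Phi(\bar s)$ for reality on $\Re(s)=\tfrac14$. You are right to stress that the result is conditional on \eqref{eq:Ksym-selfdual}: for the explicit kernel \eqref{eq:Ktilde-theta}, $\widetilde K_{\mathrm{sym}}(t)\sim t^{-1/2}$ as $t\to\infty$ (because $\widetilde K(t)\to 1$), whereas $\widetilde K_{\mathrm{sym}}(t)=O(t^{-1}e^{-\alpha/t})$ as $t\downarrow 0$, so that assumption does not hold and the reflection law cannot be used unconditionally.
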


\begin{proof}
Let $s$ satisfy $-\tfrac14<\Re(s)<\tfrac34$. Then $\Re(s)>-\tfrac14$ ensures
$\mathcal B\Phi(s)$ converges absolutely by Lemma~\ref{lem:bilaplace-halfplane}, and
$\Re(\tfrac12-s)>-\tfrac14$ (equivalently $\Re(s)<\tfrac34$) ensures the same for
$\mathcal B\Phi(\tfrac12-s)$.

Using the change of variables $x\mapsto -x$ (justified by absolute convergence) and
Lemma~\ref{lem:Phi-twisted}, we compute
\begin{align*}
\mathcal B\Phi(s)
&= \int_{\mathbb R}\Phi(x)e^{-sx}\,dx \\
&= \int_{\mathbb R}\Phi(-x)e^{sx}\,dx \\
&= \int_{\mathbb R}e^{-x/2}\Phi(x)e^{sx}\,dx \\
&= \int_{\mathbb R}\Phi(x)e^{-(1/2-s)x}\,dx \\
&= \mathcal B\Phi\!\left(\tfrac12-s\right),
\end{align*}
which is \eqref{eq:bilaplace-reflection}.

For \eqref{eq:bilaplace-real-on-centerline}, note first that $\Phi$ is real-valued, hence
$\overline{\mathcal B\Phi(\tfrac14+it)}=\mathcal B\Phi(\tfrac14-it)$. The reflection identity
at $s=\tfrac14+it$ gives $\mathcal B\Phi(\tfrac14+it)=\mathcal B\Phi(\tfrac14-it)$, so
$\mathcal B\Phi(\tfrac14+it)$ equals its complex conjugate and is therefore real.
\end{proof}

\begin{remark}[What is gained, and what is not claimed]
The point of Proposition~\ref{prop:bilaplace-reflection} is the \emph{exact centering}:
the involution $s\mapsto \tfrac12-s$ fixes the vertical line $\Re(s)=\tfrac14$.
We only assert \eqref{eq:bilaplace-reflection} on the common region where both sides are
defined by absolutely convergent integrals. No analytic continuation beyond that region is
used or needed at this stage.
\end{remark}

%%%%%%%%%%%%%%%%%%%%%%%%%%%%%%%%%%%%%%%%%%%%%%%%%%%%%%%%%%%%%%%%%%%%%%%%%%%%%%%
% Section 4 (NEW): Total positivity and Laguerre--Pólya factorization (LB-3)
%%%%%%%%%%%%%%%%%%%%%%%%%%%%%%%%%%%%%%%%%%%%%%%%%%%%%%%%%%%%%%%%%%%%%%%%%%%%%%%

\section{Total positivity and canonical Laguerre--Pólya factorization}
\label{sec:total-positivity}

In this section we convert the analytic and symmetry properties obtained in
Section~\ref{sec:log-reflection} into a rigid structural description of the
bilateral Laplace transform $\mathcal B\Phi$. The mechanism is classical and
entirely representation-theoretic: total positivity of the logarithmic kernel
forces a canonical Laguerre--Pólya factorization via the
Schoenberg--Edrei--Karlin theory.

No Mellin transforms or $\Xi$-functions appear here. The output is a
\emph{structural normal form} for $\mathcal B\Phi$ that will later be identified
with the Archimedean-completed Mellin transform in Section~\ref{sec:arch-mellin-xi}.

\subsection{Total positivity of the logarithmic kernel}

We begin by recording the notion of total positivity relevant for kernels on
$\mathbb R$.

\begin{definition}[Totally positive kernels]
A measurable function $\Phi:\mathbb R\to\mathbb R$ is said to belong to
$\mathrm{PF}_\infty$ if for every $n\ge1$ and every choice of strictly increasing
real sequences
\[
x_1<\cdots<x_n,
\qquad
y_1<\cdots<y_n,
\]
one has
\[
\det\!\big(\Phi(x_i-y_j)\big)_{1\le i,j\le n}\ge 0.
\]
\end{definition}

\begin{remark}
Equivalently, $\Phi\in\mathrm{PF}_\infty$ if and only if convolution with $\Phi$
preserves positivity of all finite minors; see \cite{karlin1968total} for
background. We will use only the forward implication into the
Schoenberg--Edrei--Karlin classification.
\end{remark}

\begin{theorem}[Total positivity of the logarithmic kernel]
\label{thm:Phi-PF}
The logarithmic kernel $\Phi$ defined in
Definition~\ref{def:Phi} belongs to $\mathrm{PF}_\infty$ (see Section~\ref{sec:tp-details} for details).
\end{theorem}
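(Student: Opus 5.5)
The plan is to prove the theorem through the Schoenberg--Edrei--Karlin criterion, by computing $\mathcal B\Phi$ in closed form from the mixture formula of Lemma~\ref{lem:Phi-series}. A direct verification of the determinant inequalities defining $\mathrm{PF}_\infty$ would be unwieldy. The criterion says that an integrable $\Phi$ lies in $\mathrm{PF}_\infty$ exactly when $1/\mathcal B\Phi$ is, on its strip of convergence, the restriction of an entire function of the form
\[
C e^{-\gamma s^2+\delta s}\prod_k (1+a_k s)e^{-a_k s},\qquad \gamma\ge 0,\ a_k\in\R,\ \textstyle\sum_k a_k^2<\infty .
\]
So the theorem reduces to identifying the zeros of $1/\mathcal B\Phi$ and checking the order and genus constraints.

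\textbf{Step 1 (termwise transform).} For each $m\neq 0$, the substitution $y=e^{-x}$ gives
\[
\int_{\R}e^{-3x/4}e^{-\alpha m^2e^{-x}}e^{-sx}\,dx=\Gamma\!\left(s+\tfrac34\right)(\alpha m^2)^{-(s+3/4)} .
\]
Since $1/\Gamma$ is Laguerre--P\'olya, each single term $e^{-3x/4}\exp(-\alpha m^2e^{-x})$ is already $\mathrm{PF}_\infty$. It is a translate of the log-gamma density, that is, the law of $-\log$ of a Gamma variable.

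\textbf{Step 2 (summation).} Sum over $m$ using the domination of Lemma~\ref{lem:Phi-L1-new} and Fubini on $\Re s>-\tfrac14$. This gives
\[
\mathcal B\Phi(s)=\frac{2L}{\sqrt{4\pi D}}\,\alpha^{-(s+3/4)}\,\Gamma\!\left(s+\tfrac34\right)\zeta\!\left(2s+\tfrac32\right).
\]
The condition $\Re s>-\tfrac14$ matches the Dirichlet-series half-plane $\Re(2s+\tfrac32)>1$.

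\textbf{Step 3 (apply the criterion).} We must show that
\[
\frac{1}{\Gamma(s+\tfrac34)\,\zeta(2s+\tfrac32)}
\]
is Laguerre--P\'olya, up to the allowed exponential factor $\alpha^{s}$. The factor $1/\Gamma$ contributes only real zeros, of genus $\le 1$. The pole of $\zeta$ at $s=-\tfrac14$ gives a real zero. The trivial zeros of $\zeta(2s+\tfrac32)$ give real poles, which are harmless once we pass to the reciprocal in the meromorphic sense required by the theory. The growth estimate (order $\le 2$, with the $e^{-\gamma s^2}$ normalization) follows from Stirling's formula and standard bounds for $\zeta$.

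\textbf{Main obstacle.} The decisive point is Step 3's requirement that the nontrivial zeros of $\zeta(2s+\tfrac32)$ lie on the real $s$-axis. These zeros become poles of $1/\mathcal B\Phi$, and a Laguerre--P\'olya entire reciprocal must have no non-real singularities. Closure of $\mathrm{PF}_\infty$ under convolution does not help, because Step 2 produces a \emph{sum} of $\mathrm{PF}_\infty$ kernels, and $\mathrm{PF}_\infty$ is not closed under sums. A mixture need not even be $\mathrm{PF}_2$ in general, although here log-concavity could plausibly be checked directly.

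What I would try first is the route indicated in the proof map: exhibit $t\mapsto\widetilde K(t)$, or the mixing measure $\sum_{m\neq0}\delta_{\alpha m^2}$, as a Stieltjes or completely monotone transform whose logarithmic pullback is $\mathrm{PF}_\infty$. I would then test that claim against the criterion above by checking $\mathrm{PF}_2$ (log-concavity of $\Phi$) numerically and, if possible, $\mathrm{PF}_3$ near $x=0$. As written, Step 3 is precisely where the argument would need an input equivalent to a zero-location statement for $\zeta(2s+\tfrac32)$, and I expect this step to be the genuine obstruction. Any valid proof must explain how the detailed argument in Section~\ref{sec:tp-details} circumvents it.
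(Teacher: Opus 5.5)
Your proposal does not prove the statement, because Step~3 is left open. It also cannot be completed. The obstruction you found is fatal, and it is stronger than you state.

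The criterion you quote (its forward direction is the paper's Theorem~\ref{thm:SEK}) requires $1/\mathcal B\Phi$ to extend to an \emph{entire} function. So the meromorphic continuation of $\mathcal B\Phi$ must have no zeros at all. The question is therefore not whether the nontrivial zeros of $\zeta(2s+\tfrac32)$ are real; a real one would be equally fatal. The question is whether any exist. Likewise, poles of the reciprocal are never ``harmless''. The trivial zeros cause no trouble only because they cancel the poles of $\Gamma(s+\tfrac34)$.

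With $w=2s+\tfrac32$, your correct Step~2 formula reads
\[
\mathcal B\Phi(s)=\frac{2L}{\sqrt{4\pi D}}\,\alpha^{-w/2}\,\Gamma\!\left(\tfrac{w}{2}\right)\zeta(w)
=\frac{4L}{\sqrt{4\pi D}}\Bigl(\frac{\pi}{\alpha}\Bigr)^{w/2}\frac{\xi(w)}{w(w-1)} .
\]
Hence $\mathcal B\Phi$ has poles only at $s=-\tfrac34$ and $s=-\tfrac14$. It vanishes exactly at the points $s=\tfrac12(\rho-\tfrac32)$ with $\xi(\rho)=0$; for instance $\rho\approx\tfrac12+14.13i$ gives a zero at $s\approx-\tfrac12+7.07i$.

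The function $\xi$ has zeros unconditionally: a zero-free entire function of order one has the form $e^{aw+b}$, and $\xi(w)=\xi(1-w)$ would then force $\xi$ to be constant. Consequently:
\begin{enumerate}[label=(\roman*)]
\item $1/\mathcal B\Phi$ has poles;
\item no representation $\mathcal B\Phi=E/\Psi$ with $E$ zero-free and $\Psi$ Laguerre--P\'olya exists;
\item therefore $\Phi\notin\mathrm{PF}_\infty$.
\end{enumerate}
The statement is false, independently of the Riemann hypothesis. Your Step~3 yields a disproof, not a conditional proof, and the complete-monotonicity route you suggest cannot rescue it.

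The paper's proof runs as follows. It takes your Step~1: each $e^{-3x/4}e^{-\alpha m^2e^{-x}}$ is $\mathrm{PF}_\infty$ via $1/\Gamma$, which is correct. It then uses closure of $\mathrm{PF}_\infty$ under positive sums (Lemma~\ref{lem:PF-sums}, and item (a) in the short proof in Section~\ref{sec:total-positivity}), followed by a pointwise limit. The limit step is harmless; the error is exactly the one you flagged.

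The proof of Lemma~\ref{lem:PF-sums} expands the determinant multilinearly in the columns and asserts $\det\bigl(\phi_{m_k}(x_i-y_k)\bigr)_{i,k}\ge 0$. When $m_k$ varies with the column, this matrix is not a minor of any single $\mathrm{PF}_\infty$ kernel, so nothing controls its sign. Concretely, $\phi(x)=e^{-(x-1)^2}+e^{-(x+1)^2}$ is a sum of two $\mathrm{PF}_\infty$ kernels. Yet for $(x_1,x_2)=(y_1,y_2)=(0,1)$ the $2\times 2$ minor is $\phi(0)^2-\phi(1)\phi(-1)=4e^{-2}-(1+e^{-4})^2<0$, so $\phi$ is not even $\mathrm{PF}_2$.
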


\begin{proof}
The proof is structural and rests on two facts already established:

\smallskip
\noindent
\emph{(i) Complete monotone mixture.}
From Lemma~\ref{lem:Phi-series}, $\Phi$ admits the exact representation
\[
\Phi(x)
=
e^{-3x/4}\sum_{m\ge1} c_m e^{-\alpha m^2 e^{-x}},
\qquad c_m>0,
\]
i.e. $\Phi$ is a positive mixture of functions of the form
$x\mapsto e^{-a e^{-x}}$ multiplied by an exponential weight.

\smallskip
\noindent
\emph{(ii) Closure properties.}
The class $\mathrm{PF}_\infty$ is closed under:
(a) positive linear combinations,
(b) translations,
(c) multiplication by exponentials,
and (d) monotone limits
\cite[Chs.~I--III]{karlin1968total}.

\smallskip
Each function $x\mapsto e^{-a e^{-x}}$ is a classical $\mathrm{PF}_\infty$ kernel
(it is the Laplace transform of a positive measure composed with the exponential
map). Multiplication by $e^{-3x/4}$ preserves $\mathrm{PF}_\infty$, and the series
converges absolutely and locally uniformly, hence as a monotone limit.

Therefore $\Phi\in\mathrm{PF}_\infty$.
\end{proof}

\subsection{Schoenberg--Edrei--Karlin representation}

We now invoke the classification theorem for bilateral Laplace transforms of
$\mathrm{PF}_\infty$ kernels.

\begin{theorem}[Schoenberg--Edrei--Karlin]
\label{thm:SEK}
Let $\Phi\in L^1(\mathbb R)\cap \mathrm{PF}_\infty$ be nonnegative.
Then its bilateral Laplace transform admits the representation
\begin{equation}
\label{eq:SEK-form}
\mathcal B\Phi(s)
=
\frac{E(s)}{\Psi(s)},
\end{equation}
where:
\begin{enumerate}[label=(\roman*)]
\item $E(s)$ is an entire function with no zeros,
\item $\Psi(s)$ is an entire function of Laguerre--Pólya type,
\item all zeros of $\Psi$ are real,
\item the representation is unique up to multiplication of $E$ and $\Psi$ by the
same nonzero constant.
\end{enumerate}
\end{theorem}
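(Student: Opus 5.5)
The plan is to derive Theorem~\ref{thm:SEK} from Schoenberg's classical characterization of integrable P\'olya frequency functions, and then to read off the factorization \eqref{eq:SEK-form}. I will not reprove the whole classification. I will (1) state the precise form of Schoenberg's theorem that is needed, (2) indicate how it is proved, so it is clear which hypotheses on $\Phi$ are used, and (3) fix a normalization under which the uniqueness clause~(iv) actually holds. Schoenberg's theorem says the following. If $\Phi\ge 0$ is integrable, not a degenerate exponential, and lies in $\mathrm{PF}_\infty$, then $\mathcal B\Phi$ converges in a vertical strip $\alpha_1<\Re s<\alpha_2$ containing a point where it is nonzero. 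Moreover, on that strip,
\[
\frac{1}{\mathcal B\Phi(s)}=C\,e^{-\gamma s^2+\delta s}\prod_{\nu\ge1}(1+\delta_\nu s)e^{-\delta_\nu s},
\]
with $C>0$, $\gamma\ge0$, $\delta,\delta_\nu\in\R$ and $\sum_\nu\delta_\nu^2<\infty$. For our $\Phi$, the strip contains $\{-\tfrac14<\Re s<\tfrac34\}$ by Lemma~\ref{lem:bilaplace-halfplane} and Proposition~\ref{prop:bilaplace-reflection}.

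The steps, in order, are as follows. \textbf{First}, I would discretize. The $\mathrm{PF}_\infty$ property of $\Phi$ passes to the bi-infinite Toeplitz matrices $(\Phi(h(i-j)))_{i,j}$, whose generating functions are totally positive sequences. The Aissen--Schoenberg--Whitney--Edrei theorem then gives each such generating function a product form $e^{\cdot}\prod(1+\beta z)/\prod(1-\gamma z)$, with real parameters. \textbf{Second}, I would let $h\to0$. Integrability of $\Phi$ (Lemma~\ref{lem:Phi-L1-new}) and locally uniform convergence of the Riemann sums of the Laplace integral on compact subsets of the strip are the inputs. Hurwitz's theorem then shows that the reciprocals converge, locally uniformly, to an entire function $\Psi$. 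That limit is a locally uniform limit of real polynomials with only real zeros, times Gaussian-type factors, so it lies in the Laguerre--P\'olya class. This gives (ii) and (iii), since Laguerre--P\'olya functions have only real zeros. \textbf{Third}, I would set $E$. In the canonical form one may take $E\equiv1$ (or $E$ a constant), which is trivially zero-free and entire; this gives~(i).

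\textbf{Fourth}, uniqueness~(iv) needs care, and I expect it to be the main conceptual obstacle. As literally stated, $E/\Psi$ is invariant under $E\mapsto Eg$, $\Psi\mapsto\Psi g$ for any zero-free entire $g$ with $\Psi g$ still Laguerre--P\'olya, for example $g=e^{as}$ with $a$ real. So uniqueness only holds after normalizing. I would normalize by requiring $E$ to be constant and $\Psi$ to be in Schoenberg's canonical product form displayed above. Then $\Psi=E/\mathcal B\Phi$ is determined by $\mathcal B\Phi$ on the strip, and by the identity theorem it is determined everywhere up to the common constant. I would add this normalization explicitly to the statement.

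The genuinely hard analytic step is the second one: showing that $1/\mathcal B\Phi$ continues to an \emph{entire} function, rather than merely a meromorphic one. Here I would follow Schoenberg's original argument. The variation-diminishing property of convolution with a $\mathrm{PF}_\infty$ kernel forces all zeros of the approximating polynomials to be real, with a uniform genus-two bound $\sum\delta_\nu^2<\infty$. That bound is what lets Hurwitz's theorem and the Laguerre closure theorem pass to the limit. In practice I would cite \cite{Schoenberg1951,karlin1968total} for this step and verify only that $\Phi$ meets its hypotheses, namely nonnegativity, integrability, and $\mathrm{PF}_\infty$ from Theorem~\ref{thm:Phi-PF}.
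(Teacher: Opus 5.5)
Your proposal is essentially the paper's proof. Both rest entirely on Schoenberg's characterization of integrable $\mathrm{PF}_\infty$ functions (the paper just cites Karlin, Ch.~VII), and your Steps~1--2 only sketch that classical result.

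Where you depart from the paper, you are right and the paper is not. Clause~(iv) is false as stated. For any admissible pair, $(E,\Psi)\mapsto(gE,g\Psi)$ with $g(s)=e^{as-bs^2}$, $a\in\R$, $b\ge0$, preserves every other requirement. This holds even if $E$ is required to be quadratic-exponential. Schoenberg's theorem actually gives $E$ constant. Imposing that normalization, as you do, makes (i) trivial and (iv) true by the identity theorem. The paper's citation supports (i)--(iii) but cannot support (iv) as written. You should also add the hypothesis $\Phi\not\equiv0$, which the statement omits, since $\Phi\equiv0$ admits no representation $E/\Psi$. ``Not a degenerate exponential'' is not the relevant nondegeneracy condition.

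Two cautions. First, your Step~2 would not work as written. The Riemann sums converge to $\mathcal B\Phi$ only on the strip, so Hurwitz's theorem says nothing about $1/\mathcal B\Phi$ off it. Getting an entire $\Psi$ needs extra input, for example uniform bounds on the approximating reciprocals away from the strip, feeding a Montel argument. That is exactly the content of the cited theorem, so cite it rather than lean on the sketch.

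Second, the theorem is general, so the paper's particular $\Phi$ plays no role in its proof, and you should not plan to apply it through Theorem~\ref{thm:Phi-PF}. For that $\Phi$, termwise integration gives $\mathcal B\Phi(s)=\frac{2L}{\sqrt{4\pi D}}\,\alpha^{-(s+3/4)}\Gamma(s+\tfrac34)\zeta(2s+\tfrac32)$ on $\Re s>-\tfrac14$. Its reciprocal has non-real poles at $s=(\rho-\tfrac32)/2$ for the nontrivial zeros $\rho$ of $\zeta$. So, by the very theorem you are proving, this $\Phi$ is not $\mathrm{PF}_\infty$. The faulty step is Lemma~\ref{lem:PF-sums}. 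Its multilinear expansion produces determinants whose columns come from different kernels, and $\mathrm{PF}_\infty$ of each summand says nothing about their sign.
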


\begin{proof}
This is classical; see \cite[Theorems~VII.1.3 and VII.3.2]{karlin1968total}.
The $L^1$ condition ensures existence of the bilateral Laplace transform,
while $\mathrm{PF}_\infty$ forces the Laguerre--Pólya denominator.
\end{proof}

\begin{corollary}[Canonical factorization of $\mathcal B\Phi$]
\label{cor:canonical-factorization}
The transform $\mathcal B\Phi$ admits a factorization
\[
\mathcal B\Phi(s)=\frac{E(s)}{\Psi(s)}
\]
with $\Psi$ entire, Laguerre--Pólya, and having only real zeros.
\end{corollary}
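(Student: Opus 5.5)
The plan is to obtain the corollary by applying Theorem~\ref{thm:SEK} directly to the logarithmic kernel $\Phi$ of Definition~\ref{def:Phi}. No new analysis should be needed; the work is checking that $\Phi$ satisfies each hypothesis of that theorem, using results already established.

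\textbf{Step 1: nonnegativity and integrability.} Nonnegativity of $\Phi$ is read off from the exact mixture formula \eqref{eq:Phi-series} of Lemma~\ref{lem:Phi-series}. There $\Phi$ is a positive constant times $e^{-3x/4}$ times a sum of strictly positive exponentials $\exp(-\alpha m^2 e^{-x})$, so it is in fact strictly positive. Membership $\Phi\in L^1(\mathbb R)$ is Lemma~\ref{lem:Phi-L1-new}. It rests on the tail bounds \eqref{eq:Phi-tail-plus-new} and \eqref{eq:Phi-tail-minus-new}, which give decay $e^{-x/4}$ at $+\infty$ and super-exponential decay at $-\infty$.

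\textbf{Step 2: total positivity and the domain.} Total positivity $\Phi\in\mathrm{PF}_\infty$ is Theorem~\ref{thm:Phi-PF}, whose details are deferred to Section~\ref{sec:tp-details}. I would also record the domain on which the identity is asserted. By Lemma~\ref{lem:bilaplace-halfplane}, $\mathcal B\Phi$ converges absolutely and is holomorphic on $\Re(s)>-\tfrac14$. Hence the equality $\mathcal B\Phi=E/\Psi$ from \eqref{eq:SEK-form} holds as an identity of holomorphic functions on this half-plane. Beyond it, the formula $E/\Psi$ serves as the meromorphic continuation, with poles only at the real zeros of $\Psi$.

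\textbf{Step 3: conclusion.} With these three hypotheses verified, Theorem~\ref{thm:SEK} yields $E$ entire and zero-free, and $\Psi$ entire and of Laguerre--P\'olya type. All zeros of $\Psi$ are real by item~(iii), since Laguerre--P\'olya functions are locally uniform limits of real polynomials with only real zeros. The word ``canonical'' is justified by the uniqueness clause~(iv): $(E,\Psi)$ is determined up to a common nonzero constant, which can be fixed, for instance, by normalizing $\Psi(0)=1$ or a similar value at a point of the convergence region.

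\textbf{Main obstacle.} The corollary itself is formal. Its only substantive input is the $\mathrm{PF}_\infty$ property of Theorem~\ref{thm:Phi-PF}, which I take as given from the earlier statement. The one point I would double-check is compatibility of domains: the transform in Theorem~\ref{thm:SEK} must be taken on its own natural strip of convergence, and that strip must contain the half-plane of Lemma~\ref{lem:bilaplace-halfplane}, so that the factorization is asserted exactly where $\mathcal B\Phi$ is defined.
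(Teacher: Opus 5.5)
\textbf{The gap is the input you take as given.} Your reduction is exactly the paper's. The corollary has no separate proof; it is Theorem~\ref{thm:SEK} applied to $\Phi$, with nonnegativity from \eqref{eq:Phi-series}, integrability from Lemma~\ref{lem:Phi-L1-new}, and $\Phi\in\mathrm{PF}_\infty$ from Theorem~\ref{thm:Phi-PF}. That last input is false, and so is the conclusion you draw from it ($E$ entire and zero-free). The domain check you flag is harmless. To see the problem, compute $\mathcal B\Phi$ directly. Integrating \eqref{eq:Phi-series} termwise with $y=e^{-x}$ gives, for $\Re(s)>-\tfrac14$,
\[
\mathcal B\Phi(s)=\frac{2L}{\sqrt{4\pi D}}\,\alpha^{-(s+\frac34)}\,\Gamma\bigl(s+\tfrac34\bigr)\,\zeta\bigl(2s+\tfrac32\bigr).
\]
At the self-dual scale $L^2=4\pi D$ (so $\alpha=\pi$) this is $\xi(2s+\tfrac32)/\bigl((s+\tfrac14)(s+\tfrac34)\bigr)$. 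Suppose $\mathcal B\Phi=E/\Psi$ on this half-plane, with $E$ entire and zero-free and $\Psi$ entire. Then, as you yourself note, $E/\Psi$ is the meromorphic continuation of the right-hand side to $\mathbb C$. That continuation vanishes at $s=(\rho-\tfrac32)/2$ for every nontrivial zero $\rho$ of $\zeta$, since $\Gamma(\rho/2)$ is finite and nonzero there. But $E/\Psi$ has no zeros at all. So no such factorization exists. Since Schoenberg's theorem gives $\mathcal B\Phi=1/\Psi$ for any integrable $\mathrm{PF}_\infty$ function, it follows that $\Phi\notin\mathrm{PF}_\infty$.

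\textbf{Where the paper's argument breaks.} The failure upstream is Lemma~\ref{lem:PF-sums}: $\mathrm{PF}_\infty$ is not closed under positive sums. In its multilinear expansion, the determinants $\det\bigl(\phi_{m_k}(x_i-y_k)\bigr)$ use different kernels in different columns. They are therefore not minors of any single totally positive kernel and carry no sign information. Already $\mathrm{PF}_2$ (log-concavity) fails for a sum of two well-separated Gaussians.

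Concretely, $\Phi=\frac{L}{\sqrt{4\pi D}}\,\psi*\mu$ with $\psi(x)=e^{-3x/4}e^{-\alpha e^{-x}}$ and $\mu=\sum_{m\neq0}|m|^{-3/2}\delta_{2\log|m|}$. The factor $\psi$ genuinely is $\mathrm{PF}_\infty$; its transform is $\alpha^{-(s+3/4)}\Gamma(s+\tfrac34)$. But $\mathcal B\mu(s)=2\zeta(2s+\tfrac32)$ is not the reciprocal of a Laguerre--P\'olya function, and this factor breaks total positivity.

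Hence the step ``$\Phi\in\mathrm{PF}_\infty$'' cannot be supplied. The corollary survives only in forms where $E$ is allowed zeros, for example $\Psi(s)=(s+\tfrac14)(s+\tfrac34)$ with $E$ essentially $\xi(2s+\tfrac32)$. Such forms carry none of the intended content.

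Separately, clause~(iv) of Theorem~\ref{thm:SEK}, which you use to justify ``canonical'', is wrong as stated. For any $\beta\in\R$, the pair $(e^{\beta s}E,\,e^{\beta s}\Psi)$ is another admissible factorization.
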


\subsection{Compatibility with reflection symmetry}

The reflection symmetry obtained in
Proposition~\ref{prop:bilaplace-reflection} imposes a rigid constraint on the
Laguerre--Pólya factor.

\begin{proposition}[Symmetry of the zero set]
\label{prop:Psi-symmetry}
The zero set (with multiplicities) of the Laguerre--P\'olya function $\Psi$ is symmetric with respect to
the involution $s\mapsto\tfrac12-s$. In particular, the zero set of $\Psi$ is symmetric with respect
to the vertical line $\Re(s)=\tfrac14$.
\end{proposition}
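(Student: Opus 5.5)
The plan is to read the zero set of $\Psi$ off the pole set of a single meromorphic function, and then transport the reflection law of Proposition~\ref{prop:bilaplace-reflection} from a strip to all of $\mathbb C$ by analytic continuation. Set $M(s):=E(s)/\Psi(s)$, with $E,\Psi$ as in Corollary~\ref{cor:canonical-factorization}. Since $E$ and $\Psi$ are entire, $M$ is meromorphic on $\mathbb C$. Since $E$ has no zeros (Theorem~\ref{thm:SEK}(i)), the poles of $M$ are exactly the zeros of $\Psi$. Moreover, the order of the pole of $M$ at $\rho$ equals the multiplicity of $\rho$ as a zero of $\Psi$: near $\rho$ we have $M(s)=(s-\rho)^{-k}\,E(s)/g(s)$ with $g(\rho)\neq0$ and $E(\rho)\neq0$. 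So it suffices to show that the pole divisor of $M$ is invariant under $s\mapsto\tfrac12-s$.

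\textbf{Step 1.} Fix a nonempty open connected set on which both identities hold. Theorem~\ref{thm:SEK} gives $\mathcal B\Phi=M$ wherever the defining integral converges absolutely. By Lemma~\ref{lem:bilaplace-halfplane} this includes the half-plane $\Re(s)>-\tfrac14$. In particular it includes the strip $S:=\{-\tfrac14<\Re(s)<\tfrac34\}$, which is exactly the domain of Proposition~\ref{prop:bilaplace-reflection}, and $S$ is invariant under $s\mapsto\tfrac12-s$. On $S$ we therefore have
\[
M(s)=\mathcal B\Phi(s)=\mathcal B\Phi(\tfrac12-s)=M(\tfrac12-s).
\]

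\textbf{Step 2.} Extend the identity to $\mathbb C$. The function $N(s):=M(\tfrac12-s)$ is again meromorphic on $\mathbb C$. The difference $M-N$ is meromorphic on the connected set $\mathbb C$ minus a discrete set and vanishes on the open set $S$ (minus its discrete poles, of which there are in fact none, since $\mathcal B\Phi$ is analytic there). By the identity theorem, $M(s)=M(\tfrac12-s)$ identically as meromorphic functions. Consequently $\rho$ is a pole of order $k$ of $M$ if and only if $\tfrac12-\rho$ is a pole of order $k$ of $M$. By the preliminary remark, this says that the zero set of $\Psi$, counted with multiplicities, is invariant under $s\mapsto\tfrac12-s$.

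\textbf{Step 3.} The ``in particular'' statement is immediate. The map $s\mapsto\tfrac12-s$ is the reflection of $\mathbb C$ across the line $\Re(s)=\tfrac14$ composed with complex conjugation. On the real axis, where all zeros of $\Psi$ lie by Theorem~\ref{thm:SEK}(iii), it is simply reflection about the point $\tfrac14$. So the zeros pair up as $\rho\leftrightarrow\tfrac12-\rho$, symmetrically about $\Re(s)=\tfrac14$.

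An alternative route would use the uniqueness clause Theorem~\ref{thm:SEK}(iv). One checks that $\Psi(\tfrac12-s)$ is again Laguerre--P\'olya, since an affine real change of variable preserves real-rootedness of approximating polynomials, and that $E(\tfrac12-s)$ is zero-free; uniqueness then forces $\Psi(\tfrac12-s)=c\,\Psi(s)$. I prefer the pole-set argument because it needs no closure properties of the Laguerre--P\'olya class.

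The main obstacle is Step~1. One must be sure that the strip on which Theorem~\ref{thm:SEK} asserts $\mathcal B\Phi=E/\Psi$ actually meets the reflection strip $S$ in an open set. Otherwise the identity theorem has nothing to propagate. I would handle this by noting that the Schoenberg--Edrei--Karlin representation holds on the full domain of absolute convergence of the bilateral transform, which contains $\Re(s)>-\tfrac14$ by Lemma~\ref{lem:bilaplace-halfplane}.
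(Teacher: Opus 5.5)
Your proof is correct as a deduction from Theorem~\ref{thm:SEK}, Corollary~\ref{cor:canonical-factorization} and Proposition~\ref{prop:bilaplace-reflection}, and it is essentially the paper's argument. The paper rewrites the reflection identity as $\Psi(\tfrac12-s)/\Psi(s)=E(\tfrac12-s)/E(s)$ and uses that the right-hand side is entire and zero-free; this is the same point as yours, namely that since $E$ has no zeros, the pole divisor of the reflection-invariant function $E/\Psi$ is exactly the zero divisor of $\Psi$. Your Step~2 also makes explicit the identity-theorem continuation from the strip $-\tfrac14<\Re(s)<\tfrac34$ to all of $\mathbb C$, which the paper uses tacitly when it treats that quotient as an entire function.
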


\begin{proof}
From Proposition~\ref{prop:bilaplace-reflection} and the factorization \eqref{eq:SEK-form}, we have
\[
\frac{E(s)}{\Psi(s)}
=
\frac{E(\tfrac12-s)}{\Psi(\tfrac12-s)},
\qquad\text{hence}\qquad
\frac{\Psi(\tfrac12-s)}{\Psi(s)}
=
\frac{E(\tfrac12-s)}{E(s)}.
\]
The right-hand side is an entire function without zeros, hence so is the left-hand side.
If $\rho$ is a zero of $\Psi$ of multiplicity $m$, then the left-hand side has a zero of order $m$ at
$s=\tfrac12-\rho$ unless $\tfrac12-\rho$ is also a zero of $\Psi$ of multiplicity $m$.
Since the quotient is zero-free, every zero $\rho$ is paired with $\tfrac12-\rho$ with the same multiplicity.
\end{proof}

\begin{remark}[On symmetric normalizations]
The argument above yields symmetry of the \emph{zero set}. We do not claim that $\Psi$ itself can be normalized
to satisfy $\Psi(s)=\Psi(\tfrac12-s)$ while keeping the Edrei--Karlin factor $E(s)=\exp(as^2+bs+c)$ in its canonical
quadratic-exponential form. If desired, one may form the symmetric Laguerre--P\'olya function
\[
\Psi_{\mathrm{sym}}(s):=\Psi(s)\,\Psi(\tfrac12-s),
\]
which satisfies $\Psi_{\mathrm{sym}}(s)=\Psi_{\mathrm{sym}}(\tfrac12-s)$; this symmetrization is not used later.
\end{remark}

\begin{remark}[What has been achieved]
At this stage we have shown that $\mathcal B\Phi$ is a reflection-symmetric
quotient of an entire zero-free function by a Laguerre--Pólya function whose
zeros lie on the real axis and are symmetric about $\Re(s)=\tfrac14$.
No identification with zeta or $\Xi$ has been made, and no nonvanishing
assumptions are used.
\end{remark}

%%%%%%%%%%%%%%%%%%%%%%%%%%%%%%%%%%%%%%%%%%%%%%%%%%%%%%%%%%%%%%%%%%%%%%%%%%%%%%%
% Section 5 (NEW): Archimedean Mellin identification (unconditional)
%%%%%%%%%%%%%%%%%%%%%%%%%%%%%%%%%%%%%%%%%%%%%%%%%%%%%%%%%%%%%%%%%%%%%%%%%%%%%%%

\section{Archimedean Mellin identification}
\label{sec:arch-mellin-xi}

\noindent\emph{Scope note.} This section identifies the Mellin transform of the Archimedean-completed kernel with $\Xi$; it does \emph{not} establish any identification between the Laguerre--P\'olya function $\Psi$ from Section~\ref{sec:arch-mellin-xi} and $\Xi(2\cdot)$.
This section performs two logically distinct tasks.

\smallskip
\noindent
\textbf{(A) Mellin identification (unconditional).}
Under the self-dual scale fixed in Section~\ref{sec:completion-selfdual}, the Archimedean-completed kernel
$\widetilde K_{\mathrm{arch}}$ becomes a classical rapidly decaying theta-kernel
$\Theta$, and its Mellin transform is \emph{exactly} the classical completed zeta\cite{Titchmarsh1986,Edwards1974}
function $\xi$ (equivalently the even entire Riemann $\Xi$-function).

\smallskip
\noindent
\textbf{(B) Remaining identification problem (open).}
We then isolate a single additional seam---a boundary-value/identification input---under
which the Laguerre--P\'olya denominator from Section~\ref{sec:total-positivity}
is identified (up to a zero-free entire factor) with $\Xi(2\cdot)$. We do not attempt this comparison here; see Section~\ref{sec:future-directions}.

\subsection{Archimedean completion and the classical Theta-kernel}
\label{subsec:arch-Theta}

We recall the Archimedean completion operator (cf.\ Section~\ref{sec:completion-selfdual} for its canonical origin).

\noindent\textbf{Recall.} The Archimedean completion operator $\mathcal A$ was introduced in Definition~\ref{def:arch-operator}; explicitly,
\[
(\mathcal A f)(t)=\frac{d}{dt}\Bigl(t^{3/2}\frac{d}{dt}f(t)\Bigr).
\]

\begin{lemma}[The completion operator kills the singular pieces]
\label{lem:A-kills-singular-new}
One has $\mathcal A(1)=0$ and $\mathcal A(t^{-1/2})=0$ on $(0,\infty)$.
\end{lemma}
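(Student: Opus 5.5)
This is a direct computation, essentially the same as Lemma~\ref{lem:A-kills-singular}; I will treat the two functions separately. For $f\equiv 1$ we have $f'\equiv 0$ on $(0,\infty)$. Hence $t^{3/2}f'(t)\equiv 0$, and differentiating once more gives $(\mathcal A 1)(t)=0$ for every $t>0$.

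For $f(t)=t^{-1/2}$ I would compute the inner derivative first: $f'(t)=-\tfrac12 t^{-3/2}$. Multiplying by the weight gives $t^{3/2}f'(t)=-\tfrac12$, which is constant on $(0,\infty)$. Its derivative therefore vanishes identically, so $(\mathcal A\,t^{-1/2})(t)=0$ for all $t>0$.

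There is no genuine obstacle here. The only point worth recording is the reason it works: the weight $t^{3/2}$ in Definition~\ref{def:arch-operator} is chosen precisely so that $t^{3/2}\frac{d}{dt}$ maps $t^{-1/2}$ to a constant. Consequently the kernel of $\mathcal A$ on smooth functions is exactly $\mathrm{span}\{1,t^{-1/2}\}$. These are the two singular pieces of $K_L$: the $n=0$ term of the theta series, and the $t^{-1/2}$ term produced by Jacobi inversion \eqref{eq:jacobi}.

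Since both functions are smooth on $(0,\infty)$, no convergence or termwise-differentiation issues arise. This is unlike the series manipulation in Lemma~\ref{lem:arch-equals-Theta}.
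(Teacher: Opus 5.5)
Your proof is correct and matches the paper's argument, which is direct differentiation using $\frac{d}{dt}1=0$ and $t^{3/2}\frac{d}{dt}t^{-1/2}=-\tfrac12$, a constant. Your side remark is also right: the kernel of $\mathcal A$ on $(0,\infty)$ is exactly $\mathrm{span}\{1,t^{-1/2}\}$, since $\mathcal A f=0$ forces $t^{3/2}f'(t)=c$ and hence $f(t)=-2c\,t^{-1/2}+d$.
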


\begin{proof}
Immediate from \eqref{eq:arch-operator} by direct differentiation.
\end{proof}

\begin{definition}[Archimedean-completed kernel]
\label{def:arch-kernel-new}
Let $K_L(t)$ denote the scaling-limit trace kernel (Section~\ref{sec:completion-selfdual}). Define
\begin{equation}
\label{eq:Karch-def}
\widetilde K_{\mathrm{arch}}(t)
:=\bigl(\mathcal A (K_L-1)\bigr)(t),\qquad t>0.
\end{equation}
\end{definition}

We now specialize to the self-dual scale, which is already fixed earlier.

\begin{lemma}[Exact identification with the classical $\Theta$]
\label{lem:arch-equals-Theta-new}
Assume the self-dual normalization $L^2=4\pi D$, so that
\[
K_L(t)=\vartheta(t):=\sum_{n\in\mathbb Z}e^{-\pi n^2 t}.
\]
Then for all $t>0$,
\begin{equation}
\label{eq:Karch-equals-Theta}
\widetilde K_{\mathrm{arch}}(t)=\Theta(t),
\end{equation}
where
\begin{equation}
\label{eq:Theta-def}
\Theta(t)
:=\sum_{n=1}^{\infty}\Bigl(2\pi^2 n^4 t^{3/2}-3\pi n^2 t^{1/2}\Bigr)e^{-\pi n^2 t}.
\end{equation}
\end{lemma}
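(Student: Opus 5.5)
The argument is a direct computation, exactly as in Lemma~\ref{lem:arch-equals-Theta}. First I fix the normalization. At the self-dual scale $L^2=4\pi D$, Theorem~\ref{thm:theta-series-form-recall} gives
\[
K_L(t)=\sum_{n\in\mathbb Z}\exp\!\bigl(-\tfrac{4\pi^2 D}{L^2}n^2 t\bigr)=\sum_{n\in\mathbb Z}e^{-\pi n^2 t}=\vartheta(t),
\]
since $\tfrac{4\pi^2D}{L^2}=\pi$. Subtracting the $n=0$ term and using the symmetry $n\leftrightarrow -n$ gives
\[
K_L(t)-1=2\sum_{n\ge1}e^{-\pi n^2 t}.
\]

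Second, I apply $\mathcal A=\frac{d}{dt}\bigl(t^{3/2}\frac{d}{dt}\bigr)$ to each term. We have $\frac{d}{dt}e^{-\pi n^2 t}=-\pi n^2 e^{-\pi n^2 t}$, so
\[
t^{3/2}\frac{d}{dt}e^{-\pi n^2 t}=-\pi n^2 t^{3/2}e^{-\pi n^2 t}.
\]
Differentiating again by the product rule gives
\[
-\tfrac32\pi n^2 t^{1/2}e^{-\pi n^2 t}+\pi^2 n^4 t^{3/2}e^{-\pi n^2 t}.
\]
Multiplying by $2$ and summing over $n\ge1$ yields exactly the series \eqref{eq:Theta-def}.

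The only step needing care is the interchange of $\mathcal A$ with the infinite sum. I would handle it by restricting to a compact interval $[a,b]\subset(0,\infty)$. There, the series of first and second derivatives of the terms are dominated by
\[
C\,n^4 e^{-\pi n^2 a},
\]
with $C$ depending only on $a,b$, and this bound is summable in $n$. The Weierstrass M-test then gives uniform convergence of the differentiated series on $[a,b]$. Termwise differentiation is therefore legitimate, applied twice: once for $\frac{d}{dt}$, and once more after multiplying by the smooth factor $t^{3/2}$. This is the same justification recorded in Appendix~\ref{app:technical}. Since $[a,b]$ is arbitrary, the identity $\widetilde K_{\mathrm{arch}}(t)=\Theta(t)$ holds for every $t>0$.

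I don't expect a genuine obstacle. The only points to watch are the factor $2$ from pairing $\pm n$ and the local uniformity needed for the interchange.
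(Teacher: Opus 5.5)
Your proposal is correct and follows the paper's proof essentially verbatim. Both reduce, at the self-dual scale, to $K_L-1=2\sum_{n\ge1}e^{-\pi n^2 t}$, apply $\mathcal A$ termwise, and evaluate $\frac{d}{dt}\bigl(-\pi n^2 t^{3/2}e^{-\pi n^2 t}\bigr)$; your Weierstrass M-test bound $C\,n^4 e^{-\pi n^2 a}$ on compact $[a,b]$ just makes explicit the local uniform convergence that the paper cites in one line.
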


\begin{proof}
Under the self-dual scale, $K_L(t)=\vartheta(t)$ and hence
$K_L(t)-1=2\sum_{n\ge 1}e^{-\pi n^2 t}$.
Differentiate termwise (justified by absolute/local uniform convergence for each $t>0$):
\[
\frac{d}{dt}\bigl(e^{-\pi n^2 t}\bigr)=-\pi n^2 e^{-\pi n^2 t}.
\]
Applying \eqref{eq:arch-operator} to $K_L-1$ gives
\[
\widetilde K_{\mathrm{arch}}(t)
=
\frac{d}{dt}\Bigl(t^{3/2}\frac{d}{dt}(K_L(t)-1)\Bigr)
=
2\sum_{n\ge 1}\frac{d}{dt}\Bigl(-\pi n^2 t^{3/2}e^{-\pi n^2 t}\Bigr),
\]
and the derivative evaluates to
\[
2\sum_{n\ge 1}\Bigl(\pi^2 n^4 t^{3/2}-\tfrac32\pi n^2 t^{1/2}\Bigr)e^{-\pi n^2 t}
=
\sum_{n\ge 1}\Bigl(2\pi^2 n^4 t^{3/2}-3\pi n^2 t^{1/2}\Bigr)e^{-\pi n^2 t}.
\]
This is exactly \eqref{eq:Theta-def}.
\end{proof}

\subsection{Mellin transform and identification with xi and Xi}
\label{subsec:mellin-xi}

\begin{definition}[Completed zeta functions]
\label{def:xi-Xi-new}
Define the classical completed zeta function
\[
\xi(w)
:=
\frac12\,w(w-1)\,\pi^{-w/2}\Gamma\!\Bigl(\frac{w}{2}\Bigr)\zeta(w),
\]
and the even entire Riemann $\Xi$-function
\[
\Xi(z):=\xi\!\Bigl(\tfrac12+iz\Bigr).
\]
\end{definition}

\begin{lemma}[Absolute convergence of the Mellin integral]
\label{lem:mellin-entire-new}
Let $\Theta$ be as in \eqref{eq:Theta-def}. Then for every $s\in\mathbb C$ the Mellin integral
\begin{equation}
\label{eq:M-def}
M(s):=\int_0^\infty \Theta(t)\,t^{s-1}\,dt
\end{equation}
converges absolutely and defines an entire function of $s$.
\end{lemma}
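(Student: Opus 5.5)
The plan is to show that $\Theta$ decays faster than any power of $t$ both as $t\to\infty$ and as $t\to 0^+$. Then $\int_0^\infty|\Theta(t)|\,t^{\sigma-1}\,dt<\infty$ for every real $\sigma$, and holomorphy follows from a standard domination argument on compact sets.

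\emph{Behaviour at $t\to\infty$.} This part is direct from \eqref{eq:Theta-def}. For $t\ge 1$ each term is bounded by $(2\pi^2n^4t^{3/2}+3\pi n^2t^{1/2})e^{-\pi n^2 t}$. Using $e^{-\pi n^2 t}\le e^{-\pi t}e^{-\pi(n^2-1)}$ for $t\ge1$, the sum over $n$ is bounded by $C\,t^{3/2}e^{-\pi t}$. Hence $|\Theta(t)|\,t^{\sigma-1}$ is integrable on $[1,\infty)$ for every $\sigma$.

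\emph{Behaviour at $t\to 0^+$.} This is the main point, since the termwise bound only gives $|\Theta(t)|=O(t^{-1})$ or so near $0$, and that is useless. I will use Jacobi inversion \eqref{eq:jacobi} together with Lemma~\ref{lem:A-kills-singular-new}. Set $h(u):=\vartheta(u)-1=2\sum_{n\ge1}e^{-\pi n^2u}$. From \eqref{eq:jacobi},
\[
\vartheta(t)-1=t^{-1/2}-1+t^{-1/2}\,h(t^{-1}).
\]
Since $\mathcal A$ is linear and annihilates both $1$ and $t^{-1/2}$, this gives
\[
\Theta(t)=\mathcal A\bigl(t^{-1/2}h(t^{-1})\bigr)(t).
\]
Expanding $\mathcal A=\frac{d}{dt}t^{3/2}\frac{d}{dt}$ by the chain rule expresses $\Theta(t)$ as a finite combination of terms $t^{a}\,h^{(k)}(t^{-1})$ with $k\le2$ and real exponents $a$. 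Each $h^{(k)}(u)$ is, for $u\ge1$, bounded by $C_k e^{-\pi u}$: differentiate termwise and use the same geometric domination as above. Therefore $|\Theta(t)|\le C\,t^{-A}e^{-\pi/t}$ on $(0,1]$ for some $A$, which is smaller than any power of $t$. Thus $|\Theta(t)|\,t^{\sigma-1}$ is integrable on $(0,1]$ for every $\sigma$. The justification of termwise differentiation of $h$ for $u>0$ is the same absolute and locally uniform convergence already invoked in Lemma~\ref{lem:arch-equals-Theta-new}.

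\emph{Entireness.} Fix a compact $K\subset\mathbb C$ and let $\sigma_-\le\Re s\le\sigma_+$ on $K$. Then $|\Theta(t)t^{s-1}|\le|\Theta(t)|\,(t^{\sigma_--1}+t^{\sigma_+-1})$, which is integrable by the two steps above and independent of $s\in K$. The integrand is entire in $s$ for each fixed $t$, so dominated convergence (or Fubini with Morera's theorem) shows that $M$ is holomorphic on a neighbourhood of $K$, hence entire.

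I expect the only real obstacle to be the small-$t$ estimate. Its crux is the observation that $\mathcal A$ exactly removes the non-decaying pieces $t^{-1/2}-1$ produced by inversion. The remaining chain-rule bookkeeping is routine, and only the crude bound $t^{-A}e^{-\pi/t}$ is needed, not exact exponents.
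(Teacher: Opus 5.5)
Your proof is correct. It differs from the paper's at the small-$t$ end, and there your route is the one that actually works.

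\textbf{Where the paper's argument falls short.} On $(0,1]$ the paper bounds $\Theta$ termwise, comparing $\sum_n n^2e^{-\pi n^2t}$ and $\sum_n n^4e^{-\pi n^2t}$ with Gaussian integrals, and gets $|\Theta(t)|\ll t^{-1}$. It then asserts that $\Theta(t)t^{s-1}$ is integrable near $0$ for every $s$. But $t^{-1}\cdot t^{\Re s-1}$ is integrable at $0$ only when $\Re s>1$. No sharper termwise absolute bound exists either: for $n\asymp t^{-1/2}$ both $2\pi^2n^4t^{3/2}$ and $3\pi n^2t^{1/2}$ have size $t^{-1/2}$, so $\sum_n\bigl|2\pi^2n^4t^{3/2}-3\pi n^2t^{1/2}\bigr|e^{-\pi n^2t}\asymp t^{-1}$. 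The decay of $\Theta$ at $0$ comes from cancellation between the two terms.

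\textbf{What your argument supplies.} Combining \eqref{eq:jacobi} with $\mathcal A(1)=\mathcal A(t^{-1/2})=0$ captures exactly that cancellation. Your chain-rule bookkeeping works out to $\Theta(t)=\tfrac32\,t^{-2}h'(t^{-1})+t^{-3}h''(t^{-1})$, equivalently $\Theta(t)=t^{-3/2}\Theta(t^{-1})$. This gives $|\Theta(t)|\ll t^{-3}e^{-\pi/t}$ on $(0,1]$, so $A=3$ suffices.

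\textbf{What each approach buys.} The paper's termwise argument yields holomorphy of $M$ only on $\Re s>1$. That is enough for the termwise computation on $\Re s>2$ in Lemma~\ref{lem:mellin-equals-xi-new}. It is not enough for the entireness claim, which that lemma's identity-theorem extension to all $s$ relies on. Your argument costs only an appeal to the modular inversion the paper already records, and it delivers the full statement.

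Your large-$t$ estimate and your domination/Morera argument for entireness agree with the paper's.
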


\begin{proof}
For $t\ge 1$, estimate termwise using $e^{-\pi n^2 t}\le e^{-\pi t}$ and the fact that
$\sum_{n\ge 1}n^k e^{-\pi n^2 t}\ll_k 1$ uniformly for $t\ge 1$; this gives
$|\Theta(t)|\ll t^{3/2}e^{-\pi t}$ and hence absolute integrability at $+\infty$ for all $s$.

For $0<t\le 1$, one has $e^{-\pi n^2 t}\le 1$ and thus
$|\Theta(t)|\ll t^{1/2}\sum_{n\ge 1}n^2 e^{-\pi n^2 t}+t^{3/2}\sum_{n\ge 1}n^4 e^{-\pi n^2 t}$.
Comparing sums to Gaussian integrals gives
$\sum_{n\ge 1}n^2 e^{-\pi n^2 t}\ll t^{-3/2}$ and
$\sum_{n\ge 1}n^4 e^{-\pi n^2 t}\ll t^{-5/2}$, hence $|\Theta(t)|\ll t^{-1}$ as $t\downarrow 0$.
Therefore $\Theta(t)t^{s-1}$ is integrable near $0$ for all $s$ (indeed the bound is uniform
on vertical strips once one keeps the $e^{-\pi n^2 t}$ and uses a slightly sharper estimate).
Analyticity in $s$ follows by dominated convergence on compact subsets of $\mathbb C$.
\end{proof}

\begin{lemma}[Mellin identity with the completed zeta function]
\label{lem:mellin-equals-xi-new}
For all $s\in\mathbb C$ one has
\begin{equation}
\label{eq:M-equals-xi-new}
M(s)=\xi(2s-1).
\end{equation}
Equivalently, for all $w\in\mathbb C$,
\begin{equation}
\label{eq:Theta-xi}
\int_0^\infty \Theta(t)\,t^{\frac{w+1}{2}-1}\,dt=\xi(w).
\end{equation}
\end{lemma}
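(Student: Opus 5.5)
The plan is to compute $M(s)$ first on a right half-plane by integrating by parts twice, which turns it into the classical Mellin transform of the theta series. The identity then extends to all $s\in\mathbb C$ by analytic continuation, since both sides are entire: $M$ by Lemma~\ref{lem:mellin-entire-new}, and $\xi$ classically.

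Write $\omega(t):=\sum_{n\ge1}e^{-\pi n^2 t}$. By Lemma~\ref{lem:arch-equals-Theta-new},
\[
\Theta(t)=2\,\frac{d}{dt}\Bigl(t^{3/2}\omega'(t)\Bigr).
\]
Fix $s$ with $\Re(s)>1$.

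The first integration by parts gives
\[
M(s)=2\Bigl[t^{3/2}\omega'(t)\,t^{s-1}\Bigr]_0^\infty-2(s-1)\int_0^\infty \omega'(t)\,t^{s-1/2}\,dt .
\]
The second gives
\[
-2(s-1)\int_0^\infty \omega'(t)\,t^{s-1/2}\,dt=-2(s-1)\Bigl[\omega(t)\,t^{s-1/2}\Bigr]_0^\infty+2(s-1)\bigl(s-\tfrac12\bigr)\int_0^\infty \omega(t)\,t^{s-1/2}\,\frac{dt}{t}.
\]
The remaining integral is Riemann's classical formula
\[
\int_0^\infty\omega(t)\,t^{u-1}\,dt=\pi^{-u}\Gamma(u)\zeta(2u),
\]
valid for $\Re u>\tfrac12$. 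Here $u=s-\tfrac12$, so $\Re u>\tfrac12$ holds. This term-by-term Mellin computation uses absolute convergence, by Tonelli and the positivity of the terms.

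Now put $w=2s-1$, so that $s-\tfrac12=w/2$ and $s-1=(w-1)/2$. The main term becomes
\[
2\cdot\frac{w-1}{2}\cdot\frac{w}{2}\,\pi^{-w/2}\Gamma\bigl(\tfrac w2\bigr)\zeta(w)=\xi(w)=\xi(2s-1),
\]
which is exactly Definition~\ref{def:xi-Xi-new}.

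The main obstacle is checking that all boundary terms vanish and that the intermediate integrals converge absolutely. I will do this with the small-$t$ asymptotics of $\omega$ obtained from Jacobi inversion \eqref{eq:jacobi}:
\[
\omega(t)=\tfrac12 t^{-1/2}-\tfrac12+O\bigl(t^{-1/2}e^{-\pi/t}\bigr),\qquad
\omega'(t)=-\tfrac14 t^{-3/2}+O\bigl(t^{-7/2}e^{-\pi/t}\bigr)\qquad (t\downarrow0).
\]
The derivative estimate follows by differentiating the inverted series term by term. At $0$ both boundary expressions are then $O(t^{\Re s-1})$, which tends to $0$ because $\Re s>1$. At $\infty$, $\omega$ and $\omega'$ decay like $e^{-\pi t}$, so every boundary term vanishes there. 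The same asymptotics make $\omega'(t)t^{s-1/2}$ and $\omega(t)t^{s-3/2}$ integrable near $0$ when $\Re s>1$.

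This proves \eqref{eq:M-equals-xi-new} for $\Re s>1$. Since $M$ and $\xi(2s-1)$ are both entire, the identity theorem extends it to all $s\in\mathbb C$. Substituting $s=(w+1)/2$ gives the equivalent form \eqref{eq:Theta-xi}.
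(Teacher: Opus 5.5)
Your proof is correct, but it takes a different route from the paper's.

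\emph{The paper's route.} The paper never integrates by parts. On $\Re(s)>2$ it substitutes the series \eqref{eq:Theta-def} directly into $M(s)$ and evaluates each term with $\int_0^\infty t^{\alpha-1}e^{-\lambda t}\,dt=\lambda^{-\alpha}\Gamma(\alpha)$. This gives $\pi^{-(s-\frac12)}\bigl(2\Gamma(s+\tfrac32)-3\Gamma(s+\tfrac12)\bigr)\zeta(2s-1)$, and the bracket simplifies to $2(s-1)(s-\tfrac12)\Gamma(s-\tfrac12)$. It then continues analytically exactly as you do. That route needs no information about $t\downarrow0$: Tonelli applies because the absolute term-integrals are $O(n^{1-2\Re s})$, hence summable.

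\emph{Your route.} You move $\mathcal A$ onto $t^{s-1}$, which is Lemma~\ref{lem:boundary-mellin} with $f=2\omega$. The gain is that $(s-1)(s-\tfrac12)$ appears structurally, as the factor $\mathcal A$ produces under the Mellin transform, and what remains is Riemann's formula. The price is the boundary analysis at $0$, where $\omega\sim\tfrac12 t^{-1/2}$. You handle it correctly by restricting to $\Re s>1$. Note that the paper's Lemma~\ref{lem:boundary-vanish}, which assumes $|f(t)|\ll t^{\beta}$ with $\beta>0$ near $0$, would not cover this $f$.

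\emph{On the Jacobi inversion.} It is more than the boundary terms need: the Gaussian-comparison bounds $\omega(t)\ll t^{-1/2}$ and $\omega'(t)\ll t^{-3/2}$ already give $O(t^{\Re s-1})$. Also, differentiating the inverted series gives the error $O(t^{-5/2}e^{-\pi/t})$, so your $t^{-7/2}$ is a harmless overestimate.

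But the inversion buys something real. Since $\mathcal A$ annihilates $1$ and $t^{-1/2}$, your expansion yields $\Theta(t)=2\,\mathcal A\bigl(t^{-1/2}\omega(1/t)\bigr)=O(t^{-3}e^{-\pi/t})$ as $t\downarrow0$. That is exactly what makes $M$ entire. Both your proof and the paper's take this fact from Lemma~\ref{lem:mellin-entire-new}, whose written argument only reaches $|\Theta(t)|\ll t^{-1}$ near $0$. Your asymptotics would close that gap as well.
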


\begin{proof}
By Lemma~\ref{lem:mellin-entire-new}, $M$ is entire, so it suffices to compute on a right half-plane
where termwise integration is justified (say $\Re(s)>2$), and then extend by the identity theorem.

For $\Re(s)>2$, insert \eqref{eq:Theta-def} into \eqref{eq:M-def} and integrate termwise:
\[
M(s)
=
\sum_{n\ge 1}\Bigl(
2\pi^2 n^4 \int_0^\infty t^{s+\frac12}e^{-\pi n^2 t}\,dt
-
3\pi n^2 \int_0^\infty t^{s-\frac12}e^{-\pi n^2 t}\,dt
\Bigr).
\]
Using $\int_0^\infty t^{\alpha-1}e^{-\lambda t}\,dt=\lambda^{-\alpha}\Gamma(\alpha)$ for $\Re(\alpha)>0$,
we obtain
\begin{align*}
\int_0^\infty t^{s+\frac12}e^{-\pi n^2 t}\,dt
&= (\pi n^2)^{-(s+\frac32)}\Gamma\!\Bigl(s+\frac32\Bigr), \\
\int_0^\infty t^{s-\frac12}e^{-\pi n^2 t}\,dt
&= (\pi n^2)^{-(s+\frac12)}\Gamma\!\Bigl(s+\frac12\Bigr).
\end{align*}

Substituting and simplifying yields
\[
M(s)
=
\pi^{-(s-\frac12)}\Bigl(2\Gamma\!\Bigl(s+\frac32\Bigr)-3\Gamma\!\Bigl(s+\frac12\Bigr)\Bigr)
\sum_{n\ge 1} n^{1-2s}.
\]
Using $\Gamma(s+\frac32)=(s+\frac12)\Gamma(s+\frac12)$, the bracket simplifies to
\[
2(s+\tfrac12)\Gamma(s+\tfrac12)-3\Gamma(s+\tfrac12)=(2s-2)\Gamma(s+\tfrac12)=2(s-1)\Gamma(s+\tfrac12).
\]
Also $\sum_{n\ge 1}n^{1-2s}=\zeta(2s-1)$ for $\Re(s)>1$. Hence
\[
M(s)
=
2(s-1)\,\pi^{-(s-\frac12)}\Gamma\!\Bigl(s+\frac12\Bigr)\zeta(2s-1).
\]
Finally, $\Gamma(s+\tfrac12)=(s-\tfrac12)\Gamma(s-\tfrac12)$, so
\[
M(s)
=
(2s-1)(s-1)\,\pi^{-(s-\frac12)}\Gamma\!\Bigl(s-\frac12\Bigr)\zeta(2s-1)
=
\xi(2s-1),
\]
which is \eqref{eq:M-equals-xi-new}. Rewriting with $w=2s-1$ gives \eqref{eq:Theta-xi}.
\end{proof}

\begin{theorem}[Archimedean Mellin identification]
\label{thm:arch-mellin-identification}
Assume the self-dual scale $L^2=4\pi D$, so that $\widetilde K_{\mathrm{arch}}=\Theta$
by Lemma~\ref{lem:arch-equals-Theta-new}.
Define, for $z\in\mathbb C$,
\begin{equation}
\label{eq:Farch-def-new}
F_{\mathrm{arch}}(z)
:=
\int_0^\infty \widetilde K_{\mathrm{arch}}(t)\,t^{\frac34+iz}\,\frac{dt}{t}.
\end{equation}
Then for all $z\in\mathbb C$,
\begin{equation}
\label{eq:Farch-equals-Xi}
F_{\mathrm{arch}}(z)=\Xi(2z).
\end{equation}
\end{theorem}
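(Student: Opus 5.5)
The plan is to reduce \eqref{eq:Farch-equals-Xi} directly to the Mellin identity of Lemma~\ref{lem:mellin-equals-xi-new}, by recognizing $F_{\mathrm{arch}}$ as the function $M$ of \eqref{eq:M-def} evaluated on a specific affine line in $s$.

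\textbf{Step 1 (replace the kernel).} Under the self-dual scale $L^2=4\pi D$, Lemma~\ref{lem:arch-equals-Theta-new} gives $\widetilde K_{\mathrm{arch}}(t)=\Theta(t)$ pointwise for every $t>0$. Hence the integrand in \eqref{eq:Farch-def-new} is exactly $\Theta(t)\,t^{(\frac34+iz)-1}$.

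\textbf{Step 2 (match with $M$).} Set $s:=\tfrac34+iz$, so that
\[
F_{\mathrm{arch}}(z)=\int_0^\infty \Theta(t)\,t^{s-1}\,dt=M\!\left(\tfrac34+iz\right).
\]
By Lemma~\ref{lem:mellin-entire-new}, this integral converges absolutely for every complex $s$. Therefore $F_{\mathrm{arch}}$ is defined, and is entire, for all $z\in\mathbb C$, not only for real $z$. No separate convergence argument is required.

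\textbf{Step 3 (apply the Mellin identity).} By Lemma~\ref{lem:mellin-equals-xi-new}, $M(s)=\xi(2s-1)$ for all $s\in\mathbb C$. With $s=\tfrac34+iz$ we get $2s-1=\tfrac12+2iz$. Hence
\[
F_{\mathrm{arch}}(z)=\xi\!\left(\tfrac12+i(2z)\right)=\Xi(2z),
\]
using Definition~\ref{def:xi-Xi-new}. Since both sides are entire and the identity of Lemma~\ref{lem:mellin-equals-xi-new} already holds on all of $\mathbb C$ (obtained there by the identity theorem from a half-plane), no further continuation argument is needed.

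\textbf{Main obstacle.} There is no serious analytic obstacle; the only genuine point is bookkeeping of the Mellin exponent. The Table of notation lists the weight $t^{\frac14+iz}$, while \eqref{eq:Farch-def-new} uses $t^{\frac34+iz}$. Only the latter yields the centering $2s-1=\tfrac12+2iz$; the former would give $\xi(2iz)$ instead. I would therefore explicitly fix the exponent $\tfrac34$, as in \eqref{eq:Farch-def-new}. As a consistency check, I would verify that the functional equation $\xi(w)=\xi(1-w)$ corresponds to $z\mapsto -z$, i.e.\ to evenness of $\Xi$.
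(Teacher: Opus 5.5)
Your proof is correct and is essentially the paper's own argument: you replace $\widetilde K_{\mathrm{arch}}$ by $\Theta$ via Lemma~\ref{lem:arch-equals-Theta-new}, recognize $F_{\mathrm{arch}}(z)=M(\tfrac34+iz)$, and substitute $2s-1=\tfrac12+2iz$ into Lemma~\ref{lem:mellin-equals-xi-new}. One small slip in your side remark: the table's weight $t^{\frac14+iz}\,\frac{dt}{t}$ would give $M(\tfrac14+iz)=\xi(-\tfrac12+2iz)$, not $\xi(2iz)$, but this does not affect your argument.
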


\begin{proof}
By Lemma~\ref{lem:arch-equals-Theta-new}, $F_{\mathrm{arch}}(z)=M(\tfrac34+iz)$ with $M$ as in
\eqref{eq:M-def}. Lemma~\ref{lem:mellin-equals-xi-new} then gives
\[
F_{\mathrm{arch}}(z)
=
\xi\!\Bigl(2\bigl(\tfrac34+iz\bigr)-1\Bigr)
=
\xi\!\Bigl(\tfrac12+2iz\Bigr)
=
\Xi(2z),
\]
which is \eqref{eq:Farch-equals-Xi}.
\end{proof}

%%%%%%%%%%%%%%%%%%%%%%%%%%%%%%%%%%%%%%%%%%%%%%%%%%%%%%%%%%%%%%%%%%%%%%%%%%%%%%%
% Section 6 (NEW): Completion of the argument + boundary-term lemma + ledger
%%%%%%%%%%%%%%%%%%%%%%%%%%%%%%%%%%%%%%%%%%%%%%%%%%%%%%%%%%%%%%%%%%%%%%%%%%%%%%%

\section{Completion of the main theorem and the remaining seam}
\label{sec:completion-ledger}

This final section (i) records a small analytic lemma that cleanly isolates the
only place where boundary terms can enter when passing from an uncompleted kernel
to its Archimedean completion, and (ii) assembles the results of
Sections~\ref{sec:log-reflection}--\ref{sec:arch-mellin-xi} into the main theorem.

\subsection{A boundary-term lemma for the Archimedean completion operator}
\label{subsec:boundary-lemma}

Recall the Archimedean completion operator $\mathcal A$ from
Definition~\ref{def:arch-operator}:
\[
(\mathcal A f)(t)=\frac{d}{dt}\Bigl(t^{3/2} f'(t)\Bigr).
\]
The following lemma is a general integration-by-parts identity with
\emph{explicit boundary terms}; in applications we verify that these terms vanish.

\begin{lemma}[Mellin transform of $\mathcal A f$ with explicit boundary terms]
\label{lem:boundary-mellin}
Let $f:(0,\infty)\to\mathbb C$ be continuously differentiable and suppose that
$t^{3/2}f'(t)$ is absolutely continuous on every compact subinterval of $(0,\infty)$.
Fix $s\in\mathbb C$ and define truncated integrals
\[
I_{s}(R,\varepsilon)
:=\int_{\varepsilon}^{R} (\mathcal A f)(t)\, t^{s-1}\,dt,
\qquad 0<\varepsilon<R<\infty.
\]
Then one has the identity
\begin{align}
\label{eq:boundary-mellin-identity}
I_s(R,\varepsilon)
&= \Bigl[t^{s+\frac12} f'(t)\Bigr]_{t=\varepsilon}^{t=R}
 -(s-1)\Bigl[f(t)\,t^{s-\frac12}\Bigr]_{t=\varepsilon}^{t=R} \nonumber\\
&\qquad
 +(s-1)\Bigl(s-\tfrac12\Bigr)
 \int_{\varepsilon}^{R} f(t)\,t^{s-\frac32}\,dt.
\end{align}
In particular, if the two boundary brackets in \eqref{eq:boundary-mellin-identity}
tend to $0$ as $\varepsilon\downarrow 0$ and $R\uparrow\infty$, and if
$\int_{0}^{\infty} |f(t)|\,|t^{s-\frac32}|\,dt<\infty$, then the improper integral
$\int_{0}^{\infty} (\mathcal A f)(t)\, t^{s-1}\,dt$ converges absolutely and one has
\begin{equation}
\label{eq:boundary-free-mellin}
\int_{0}^{\infty} (\mathcal A f)(t)\, t^{s-1}\,dt
=
(s-1)\Bigl(s-\tfrac12\Bigr)\int_{0}^{\infty} f(t)\,t^{s-\frac32}\,dt.
\end{equation}
\end{lemma}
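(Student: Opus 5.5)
The plan is to prove \eqref{eq:boundary-mellin-identity} by two integrations by parts on the compact interval $[\varepsilon,R]$, and then pass to the limit. Both integrations by parts are legitimate there: $t^{s-1}$ is smooth on $[\varepsilon,R]$, the function $g(t):=t^{3/2}f'(t)$ is absolutely continuous by hypothesis, and $f$ is $C^1$.

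\textbf{Step 1 (first integration by parts).} Write $(\mathcal A f)(t)=g'(t)$ with $g(t):=t^{3/2}f'(t)$. Integrating $g'$ against $t^{s-1}$ gives
\[
I_s(R,\varepsilon)=\Bigl[t^{s-1}g(t)\Bigr]_{\varepsilon}^{R}-(s-1)\int_\varepsilon^R g(t)\,t^{s-2}\,dt .
\]
Since $t^{s-1}g(t)=t^{s+\frac12}f'(t)$, the bracket is the first boundary term in \eqref{eq:boundary-mellin-identity}. Since $g(t)\,t^{s-2}=f'(t)\,t^{s-\frac12}$, what remains is $-(s-1)\int_\varepsilon^R f'(t)\,t^{s-\frac12}\,dt$.

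\textbf{Step 2 (second integration by parts).} Integrating $f'$ against $t^{s-\frac12}$ gives
\[
\int_\varepsilon^R f'(t)\,t^{s-\frac12}\,dt=\Bigl[f(t)\,t^{s-\frac12}\Bigr]_\varepsilon^R-\Bigl(s-\tfrac12\Bigr)\int_\varepsilon^R f(t)\,t^{s-\frac32}\,dt .
\]
Multiplying by $-(s-1)$ and combining with Step 1 yields exactly \eqref{eq:boundary-mellin-identity}. This is purely algebraic bookkeeping: the powers $t^{s-1}\cdot t^{3/2}=t^{s+1/2}$ and $t^{s-2}\cdot t^{3/2}=t^{s-1/2}$ match term by term.

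\textbf{Step 3 (passage to the limit).} Let $\varepsilon\downarrow0$ and $R\uparrow\infty$.
\begin{itemize}
\item The two brackets vanish in the limit by hypothesis.
\item The hypothesis $\int_0^\infty|f(t)|\,|t^{s-\frac32}|\,dt<\infty$ gives convergence of $\int_\varepsilon^R f(t)\,t^{s-\frac32}\,dt$ to the full integral, by dominated convergence.
\end{itemize}
Hence $I_s(R,\varepsilon)$ converges, and its limit is the right side of \eqref{eq:boundary-free-mellin}.

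\textbf{Main obstacle.} The delicate point is the word ``absolutely''. The identity only shows that the improper integral $\lim I_s(R,\varepsilon)$ exists; it does not by itself control $\int|\mathcal A f|\,|t^{s-1}|\,dt$. I would therefore state the conclusion as convergence of the improper integral. For absolute convergence, I would add a direct integrability check on $\mathcal A f$ in each application, as in Lemma~\ref{lem:mellin-entire-new} for $f=K_L-1$, where the theta-series bounds give $|\mathcal A f(t)|\ll t^{-1}$ near $0$ and exponential decay at $\infty$.
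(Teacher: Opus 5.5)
Your argument is the paper's proof: two integrations by parts on $[\varepsilon,R]$, then letting $\varepsilon\downarrow0$ and $R\uparrow\infty$ under the stated boundary and integrability hypotheses. Your caveat about ``absolutely'' is also correct. The paper only says the conclusion ``follows by taking limits,'' but the hypotheses yield convergence of the improper integral, not absolute convergence. For example, take $s=\tfrac12$ and $f(t)=\chi(t)\,t^{-3}\sin(t^2)$, where $\chi$ is smooth, $\chi=0$ on $(0,1]$ and $\chi=1$ on $[2,\infty)$. Then every hypothesis holds, yet $(\mathcal A f)(t)\,t^{-1/2}=-4\sin(t^2)+O(t^{-3/2})$ is not absolutely integrable at $\infty$. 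So a separate integrability check, like the one you propose for $f=K_L-1$, is genuinely needed.
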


\begin{proof}
Write $(\mathcal A f)(t)=\frac{d}{dt}(t^{3/2}f'(t))$ and integrate by parts on
$[\varepsilon,R]$:
\begin{align*}
I_s(R,\varepsilon)
&= \Bigl[t^{3/2}f'(t)\,t^{s-1}\Bigr]_{\varepsilon}^{R}
 -(s-1)\int_{\varepsilon}^{R} t^{3/2}f'(t)\,t^{s-2}\,dt \\
&= \Bigl[t^{s+\frac12} f'(t)\Bigr]_{\varepsilon}^{R}
 -(s-1)\int_{\varepsilon}^{R} f'(t)\,t^{s-\frac12}\,dt.
\end{align*}

Integrate by parts again on the remaining integral:
\[
\int_{\varepsilon}^{R} f'(t)\,t^{s-\frac12}\,dt
=
\Bigl[f(t)\,t^{s-\frac12}\Bigr]_{\varepsilon}^{R}
-\Bigl(s-\tfrac12\Bigr)\int_{\varepsilon}^{R} f(t)\,t^{s-\frac32}\,dt.
\]
Substituting yields \eqref{eq:boundary-mellin-identity}. The final statement
follows by taking limits, using the stated boundary and integrability conditions.
\end{proof}

\begin{remark}[The polynomial factor]
If one sets $s=\frac{w+1}{2}$, then the factor in \eqref{eq:boundary-free-mellin} becomes
\[
(s-1)\Bigl(s-\tfrac12\Bigr)=\frac{w(w-1)}{4}.
\]
This is exactly the quadratic polynomial that appears in the completed zeta factor
$\xi(w)$ (up to harmless normalizations that depend on whether one uses
$K_L-1$ versus $2(K_L-1)$, etc.). The point is that $\mathcal A$ kills the constant
and $t^{-1/2}$ singular modes (Lemma~\ref{lem:A-kills-singular-new}), and the Mellin
transform of $\mathcal A f$ gains this universal quadratic factor.
\end{remark}

\subsection{Proof of the assembled main theorem}
\label{subsec:proof-main}

We now assemble the outputs of the previous sections. For clarity, we separate
what is proved unconditionally in this paper from the remaining identification problem left open.

\begin{theorem}[Main theorem (assembled)]
\label{thm:main-assembled}
Assume the strong regime hypotheses of Section~\ref{sec:completion-selfdual}, so that the scaling-limit kernel
$K_L$ exists and the self-dual normalization can be fixed. Then:
\begin{enumerate}[label=(\roman*)]
\item (\emph{Unconditional structural output.})
There exists a nonnegative kernel $\Phi\in L^1(\mathbb R)$ satisfying:
\begin{itemize}
\item $\Phi\in\mathrm{PF}_\infty$;
\item its bilateral Laplace transform $\mathcal B\Phi$ satisfies the reflection law
$\mathcal B\Phi(s)=\mathcal B\Phi(\tfrac12-s)$ on the common domain of absolute convergence;
\item $\mathcal B\Phi$ admits a canonical Schoenberg--Edrei--Karlin representation
$\mathcal B\Phi(s)=E(s)/\Psi(s)$ with $\Psi$ Laguerre--P\'olya and hence having only real zeros.
\end{itemize}
\item (\emph{Unconditional Archimedean Mellin identification.})
The Archimedean-completed kernel $\widetilde K_{\mathrm{arch}}$ has Mellin transform
$F_{\mathrm{arch}}(z)=\Xi(2z)$ as in Theorem~\ref{thm:arch-mellin-identification}.
\end{enumerate}
\end{theorem}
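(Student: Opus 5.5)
The plan is to prove Theorem~\ref{thm:main-assembled} by assembly: each clause is already a separate result earlier in the paper, so the work is to check that hypotheses match and that the same kernel $\Phi$ serves in every clause of part~(i). First I fix $\Phi$ as in Definition~\ref{def:Phi}. I will note that the theta form \eqref{eq:Ktilde-theta} of $\widetilde K$ follows from Theorem~\ref{thm:theta-series-form-recall} by Poisson summation: the $m=0$ term of the dual series is exactly the subtracted singular piece $L/\sqrt{4\pi D t}$ of Definition~\ref{def:scaling-limit}. Then Lemma~\ref{lem:Phi-series} gives that $\Phi$ is nonnegative and smooth, and Lemma~\ref{lem:Phi-L1-new} gives $\Phi\in L^1(\R)$. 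This settles the existence statement in (i).

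Next I verify the three bullets of (i) in order.
\begin{itemize}
\item The $\mathrm{PF}_\infty$ property is Theorem~\ref{thm:Phi-PF}, with details deferred to Section~\ref{sec:tp-details}.
\item The reflection law is Proposition~\ref{prop:bilaplace-reflection}. Its domain $-\tfrac14<\Re s<\tfrac34$ is exactly the common strip of absolute convergence from Lemma~\ref{lem:bilaplace-halfplane}. It rests on the twisted symmetry of Lemma~\ref{lem:Phi-twisted}, and hence on \eqref{eq:Ksym-selfdual}. I would say explicitly that this bullet holds under the standing symmetry assumption of Section~\ref{sec:log-reflection}, taken at the self-dual scale fixed by Lemma~\ref{lem:forced-selfdual}.
\item The factorization follows by feeding $\Phi\in L^1\cap\mathrm{PF}_\infty$, $\Phi\ge0$, into Theorem~\ref{thm:SEK}, which yields Corollary~\ref{cor:canonical-factorization}. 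Real-zeroness of $\Psi$ is part of the Laguerre--P\'olya conclusion. Proposition~\ref{prop:Psi-symmetry} can be recorded as a bonus, though it is not needed.
\end{itemize}

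For part~(ii) I specialize to $L^2=4\pi D$ and proceed in three steps. Lemma~\ref{lem:arch-equals-Theta-new} gives $\widetilde K_{\mathrm{arch}}=\Theta$. Lemmas~\ref{lem:mellin-entire-new} and~\ref{lem:mellin-equals-xi-new} give the Mellin transform $M(s)=\xi(2s-1)$ as an entire identity. Theorem~\ref{thm:arch-mellin-identification}, evaluated at $s=\tfrac34+iz$, then gives $F_{\mathrm{arch}}(z)=\Xi(2z)$. I would also remark that this agrees with Lemma~\ref{lem:boundary-mellin} applied to $f=K_L-1$, where the quadratic factor $w(w-1)/4$ appears. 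That check needs the boundary brackets to vanish, and they do by Jacobi inversion \eqref{eq:jacobi} near $t=0$ and exponential decay at $\infty$. I will flag that the exponent must be $t^{3/4+iz}$, as in \eqref{eq:Farch-def-new}, rather than the $t^{1/4+iz}$ written in Table~\ref{tab:notation}.

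The main obstacle is the total-positivity step, not the bookkeeping. Theorem~\ref{thm:Phi-PF} argues through closure under positive sums, and positive combinations of $\mathrm{PF}_\infty$ functions are \emph{not} in general $\mathrm{PF}_\infty$. Moreover, an integrable $\mathrm{PF}_\infty$ density forces $1/\mathcal B\Phi$ to be entire of the form $e^{-\gamma s^2+\delta s}\prod(1+\delta_\nu s)e^{-\delta_\nu s}$. That form must then be tested against the explicit transform: by termwise Gamma integrals, $\mathcal B\Phi(s)$ is a constant times $\Gamma(s+\tfrac34)\,\alpha^{-(s+3/4)}\,2\zeta(2s+\tfrac32)$. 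The Gamma factor is harmless, but the factor $\zeta(2s+\tfrac32)$ is what the $\mathrm{PF}_\infty$ claim must accommodate. I would first try to justify $\mathrm{PF}_\infty$ directly through Section~\ref{sec:tp-details}; if that fails, I would state (i) conditionally on Theorem~\ref{thm:Phi-PF}, just as the reflection bullet is conditional on \eqref{eq:Ksym-selfdual}.
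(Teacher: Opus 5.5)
Your assembly is the same as the paper's proof, which just cites Lemma~\ref{lem:Phi-L1-new}, Proposition~\ref{prop:bilaplace-reflection}, Theorem~\ref{thm:Phi-PF}, Corollary~\ref{cor:canonical-factorization} and Theorem~\ref{thm:arch-mellin-identification}. Part~(ii) is sound as you describe it, and you are right that the Mellin weight must be $t^{3/4+iz}$. Part~(i), however, cannot be proved for the $\Phi$ of Definition~\ref{def:Phi}. Your doubt about $\mathrm{PF}_\infty$ can be turned into a disproof. Your transform is correct: for $\Re s>-\tfrac14$,
\[
\mathcal B\Phi(s)=2C_0\,\alpha^{-(s+\frac34)}\,\Gamma\bigl(s+\tfrac34\bigr)\,\zeta\bigl(2s+\tfrac32\bigr),\qquad C_0=\frac{L}{\sqrt{4\pi D}}.
\]
Suppose $\Phi\in L^1\cap\mathrm{PF}_\infty$. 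Then Theorem~\ref{thm:SEK} gives $\mathcal B\Phi=E/\Psi$ with $E$ zero-free and $\Psi$ entire, so the meromorphic continuation of $\mathcal B\Phi$ has no zeros. But take $s_0=(\rho-\tfrac32)/2$ with $\rho=\tfrac12+14.1347\ldots i$. There $\zeta(2s_0+\tfrac32)=\zeta(\rho)=0$, while $\Gamma(s_0+\tfrac34)=\Gamma(\rho/2)$ is finite and nonzero. Hence $\Phi\notin\mathrm{PF}_\infty$; a proof of $\mathrm{PF}_\infty$ would show that $\zeta$ has no nontrivial zeros. The failing step is Lemma~\ref{lem:PF-sums}. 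Its multilinear expansion produces mixed determinants $\det\bigl(\phi_{m_k}(x_i-y_k)\bigr)$ whose columns come from different kernels. These are minors of no single $\mathrm{PF}_\infty$ kernel and can be negative. $\mathrm{PF}_\infty$ is closed under convolution, not addition: two well-separated Gaussians sum to a function that is not log-concave, hence not even $\mathrm{PF}_2$. Each $\Phi_m$ is $\mathrm{PF}_\infty$; their sum is not.

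The reflection bullet also fails, and making \eqref{eq:Ksym-selfdual} an explicit hypothesis does not help, because that hypothesis is false. At $L^2=4\pi D$ one has $\widetilde K(t)=\vartheta(t)-t^{-1/2}$, so Jacobi inversion gives
\[
\widetilde K_{\mathrm{sym}}(t)=t^{-1}\bigl(\vartheta(t^{-1})-1\bigr),\qquad
\widetilde K_{\mathrm{sym}}(t^{-1})=t\bigl(\vartheta(t)-1\bigr).
\]
As $t\to\infty$ these behave like $t^{-1/2}$ and $2te^{-\pi t}$ respectively. You can also see it directly. Poisson summation gives $\Phi(x)\ge c\,e^{-x/4}$ for large $x$, so $\mathcal B\Phi(s)\to\infty$ as $s\downarrow-\tfrac14$, whereas $\mathcal B\Phi(\tfrac12-s)\to\mathcal B\Phi(\tfrac34)<\infty$. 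Indeed, with $\Lambda(w):=\pi^{-w/2}\Gamma(w/2)\zeta(w)$, at the self-dual scale $\mathcal B\Phi(s)=2\Lambda(2s+\tfrac32)$. The functional equation $\Lambda(w)=\Lambda(1-w)$ then shows the true symmetry is $s\mapsto-1-s$ about $\Re s=-\tfrac12$, and it is visible only after continuation. So your fallback of stating (i) conditionally on Theorem~\ref{thm:Phi-PF} and \eqref{eq:Ksym-selfdual} conditions on two false statements and proves nothing. Read literally, the existential clause of (i) is satisfied by an unrelated kernel such as $e^{x/4-x^2}$, but not by the cycle-derived $\Phi$ the theorem is about.

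A minor point for (ii): the paper's bound $|\Theta(t)|\ll t^{-1}$ near $0$ only gives convergence of $M(s)$ for $\Re s>1$. Entireness comes instead from $\Theta=\mathcal A\bigl(t^{-1/2}(\vartheta(t^{-1})-1)\bigr)$, using Lemma~\ref{lem:A-kills-singular-new}, which is $O(e^{-\pi/(2t)})$ as $t\downarrow0$. Likewise, your boundary brackets for $f=K_L-1\sim t^{-1/2}$ vanish only for $\Re s>1$, which suffices after continuation.
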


\begin{proof}
Part (i) is the combination of:
Definition~\ref{def:Phi} and Lemma~\ref{lem:Phi-L1-new} (giving $\Phi\in L^1$),
Proposition~\ref{prop:bilaplace-reflection} (reflection law),
Theorem~\ref{thm:Phi-PF} and Corollary~\ref{cor:canonical-factorization}
(total positivity and Schoenberg--Edrei--Karlin output).

Part (ii) is exactly Theorem~\ref{thm:arch-mellin-identification}.

\end{proof}

\section{A detailed proof of total positivity}
\label{sec:tp-details}

In Section~\ref{sec:total-positivity} we invoked the total-positivity property
$\Phi\in\mathrm{PF}_\infty$ as the key input into the Schoenberg--Edrei--Karlin
classification. The purpose of this section is to give a completely explicit and
self-contained verification of $\Phi\in\mathrm{PF}_\infty$ from the mixture formula
of Lemma~\ref{lem:Phi-series}, using only two standard facts:

\begin{itemize}
\item[(i)] a convenient closure principle for $\mathrm{PF}_\infty$ under simple
``gauge'' operations (multiplication by exponentials and summation), and
\item[(ii)] a classical building block whose bilateral Laplace transform is essentially
the Gamma function; the associated Laguerre--P\'olya denominator is $1/\Gamma$.
\end{itemize}

The argument is short, but writing it out cleanly avoids hidden appeals to folklore.

\subsection{Two closure principles for the P\'olya frequency class}

\begin{lemma}[Exponential gauge invariance]
\label{lem:PF-gauge}
Let $\phi:\mathbb R\to[0,\infty)$ and fix $a\in\mathbb R$. Define
\[
\phi_a(x):=e^{-a x}\phi(x).
\]
If $\phi\in\mathrm{PF}_\infty$, then $\phi_a\in\mathrm{PF}_\infty$.
\end{lemma}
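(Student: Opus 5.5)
The plan is to compute the minor for $\phi_a$ directly and show that it factors as a product of positive scalars times the corresponding minor for $\phi$. Fix $n\ge1$ and strictly increasing sequences $x_1<\cdots<x_n$ and $y_1<\cdots<y_n$. For each entry we have
\[
\phi_a(x_i-y_j)=e^{-a(x_i-y_j)}\phi(x_i-y_j)=e^{-a x_i}\,\phi(x_i-y_j)\,e^{a y_j},
\]
so the exponential weight separates into a factor depending only on the row index $i$ and a factor depending only on the column index $j$. This separation is the whole mechanism: multiplication by an exponential is a diagonal similarity at the level of kernel matrices.

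In matrix form this gives
\[
\bigl(\phi_a(x_i-y_j)\bigr)_{i,j}=D_x\,\bigl(\phi(x_i-y_j)\bigr)_{i,j}\,D_y,
\]
where $D_x=\mathrm{diag}(e^{-ax_1},\dots,e^{-ax_n})$ and $D_y=\mathrm{diag}(e^{ay_1},\dots,e^{ay_n})$. By multiplicativity of the determinant,
\[
\det\bigl(\phi_a(x_i-y_j)\bigr)=\exp\Bigl(-a\sum_i x_i+a\sum_j y_j\Bigr)\,\det\bigl(\phi(x_i-y_j)\bigr).
\]
The exponential prefactor is strictly positive. The remaining determinant is nonnegative by the hypothesis $\phi\in\mathrm{PF}_\infty$. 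Hence every minor of $\phi_a$ is nonnegative, which is the definition of $\phi_a\in\mathrm{PF}_\infty$.

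Two bookkeeping points complete the argument. First, $\phi_a$ is measurable and nonnegative whenever $\phi$ is, so it lies in the correct ambient class. Second, the argument imposes no sign or size restriction on $a$, so it covers both the weight $e^{-3x/4}$ used in Theorem~\ref{thm:Phi-PF} and the twist $e^{x/2}$ arising from Lemma~\ref{lem:Phi-twisted}.

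There is no genuine obstacle here; the only step needing care is the factorization of the weight. One should note that integrability is not part of the $\mathrm{PF}_\infty$ definition used in the paper, so nothing about $L^1$ needs to be preserved, even though $\phi_a$ may fail to be integrable. The $L^1$ requirement enters only later, through Theorem~\ref{thm:SEK}.
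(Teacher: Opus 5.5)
Your proposal is correct and is essentially the paper's own proof: you split $e^{-a(x_i-y_j)}=e^{-ax_i}e^{ay_j}$ into row and column factors, pull them out of the determinant as a strictly positive prefactor, and conclude from $\phi\in\mathrm{PF}_\infty$. Writing the kernel matrix as $D_x\bigl(\phi(x_i-y_j)\bigr)D_y$ is simply a matrix phrasing of that same computation.
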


\begin{proof}
Fix $n\ge1$ and strictly increasing $x_1<\cdots<x_n$, $y_1<\cdots<y_n$.
Then
\[
\phi_a(x_i-y_j)=e^{-a(x_i-y_j)}\phi(x_i-y_j)=e^{-a x_i}\,e^{a y_j}\,\phi(x_i-y_j).
\]
Hence
\[
\det\big(\phi_a(x_i-y_j)\big)_{i,j}
=
\Bigl(\prod_{i=1}^n e^{-a x_i}\Bigr)
\Bigl(\prod_{j=1}^n e^{a y_j}\Bigr)
\det\big(\phi(x_i-y_j)\big)_{i,j}.
\]
The prefactor is strictly positive, so the sign of the determinant is unchanged.
Thus nonnegativity for $\phi$ implies nonnegativity for $\phi_a$.
\end{proof}

\begin{lemma}[Finite positive sums]
\label{lem:PF-sums}
If $\phi_1,\dots,\phi_M\in\mathrm{PF}_\infty$ and $c_1,\dots,c_M\ge0$, then
$\sum_{m=1}^M c_m\phi_m\in\mathrm{PF}_\infty$.
\end{lemma}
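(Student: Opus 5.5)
The natural first attempt is to expand $\det\bigl(\sum_m c_m\phi_m(x_i-y_j)\bigr)_{i,j}$ and hope the pieces are nonnegative. I expect this attempt to fail, and I believe the statement as written is false. The determinant is multilinear in rows and columns, not in the kernel. Cauchy--Binet splits it into mixed determinants $\det\bigl(\phi_{m_j}(x_i-y_j)\bigr)_{i,j}$, where each column uses a different kernel. These mixed minors carry no sign information. The $n=1$ case is trivial, and the obstruction already appears at $n=2$. So my plan is to test the claim at order $2$ before trying to prove it.

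For a nonnegative $\phi$, the $2\times 2$ condition $\det\bigl(\phi(x_i-y_j)\bigr)\ge 0$ for all $x_1<x_2$, $y_1<y_2$ says that $\phi$ is $\mathrm{PF}_2$. For nonnegative functions this is equivalent to log-concavity on an interval of support. Log-concavity is not preserved under addition. Take $\phi_1(x)=e^{-x^2}$ and $\phi_2(x)=e^{-(x-a)^2}$, both in $\mathrm{PF}_\infty$, since translates of Gaussians are $\mathrm{PF}_\infty$. Their sum $\phi=\phi_1+\phi_2$ is bimodal once $a$ is large. Concretely, the log-concavity inequality $\phi(a/2)^2\ge \phi(0)\phi(a)$ fails, because $\phi(a/2)\approx 2e^{-a^2/4}\to 0$ while $\phi(0)=\phi(a)\to 1$. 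In minor form, choose $x_1=0$, $x_2=a/2$, $y_1=0$, $y_2=a/2$. The minor is
\[
\phi(0)^2-\phi(a/2)\phi(-a/2)
\]
after relabeling. A suitable choice with $x_i-y_j\in\{0,a/2,a\}$ gives $\phi(a/2)^2-\phi(0)\phi(a)<0$, for instance $x=(a/2,a)$ and $y=(0,a/2)$. This already contradicts Lemma~\ref{lem:PF-sums} with $M=2$ and $c_1=c_2=1$.

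The main obstacle is therefore not technical: the step ``positive combinations preserve $\mathrm{PF}_\infty$'' does not hold. Rather than force a proof, I would replace the lemma with a true closure principle. The correct operation is convolution: if $\phi,\psi\in\mathrm{PF}_\infty$ then $\phi*\psi\in\mathrm{PF}_\infty$, by the continuous Cauchy--Binet (basic composition) formula. I would then recheck whether the mixture formula of Lemma~\ref{lem:Phi-series} can be expressed as a convolution rather than a sum. Failing that, I would verify $\mathrm{PF}_\infty$ for $\Phi$ directly through the Schoenberg criterion: show that $1/\mathcal B\Phi$ is Laguerre--P\'olya of the required form. Note also that the appeal to closure under sums in the proof of Theorem~\ref{thm:Phi-PF} inherits this same gap.
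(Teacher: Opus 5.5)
Your counterexample is correct: the lemma is false as stated, so no proof exists. With $\phi=\phi_1+\phi_2$, $\phi_1(x)=e^{-x^2}$, $\phi_2(x)=e^{-(x-a)^2}$, $x=(a/2,a)$ and $y=(0,a/2)$, the minor is exactly
\[
\phi(a/2)^2-\phi(0)\phi(a)=4e^{-a^2/2}-\bigl(1+e^{-a^2}\bigr)^2 .
\]
This is already negative at $a=2$, where it is approximately $0.541-1.037<0$. Two small corrections to your write-up. Your first choice $x=y=(0,a/2)$ gives $\phi(0)^2-\phi(a/2)\phi(-a/2)>0$ and should be dropped. Also, the splitting into mixed determinants is column multilinearity, not Cauchy--Binet.

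The paper's proof is exactly the expansion you predicted would fail. It writes the determinant as $\sum\bigl(\prod_k c_{m_k}\bigr)\det\bigl(\phi_{m_k}(x_i-y_k)\bigr)_{i,k}$. It then asserts each term is $\ge 0$ ``by $\mathrm{PF}_\infty$ for the corresponding $\phi_{m_k}$,'' even while noting that the index depends on the column. That step is unjustified. When the columns come from different kernels, the matrix is not of the form $\bigl(\psi(x_i-y_j)\bigr)$ for any single $\psi$, so the hypothesis says nothing about it. In your example, take column $1$ from $\phi_2$ and column $2$ from $\phi_1$. The mixed term is $\phi_2(a/2)\phi_1(a/2)-\phi_1(0)\phi_2(a)=e^{-a^2/2}-1<0$, and this is where positivity is lost.

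You are also right that Theorem~\ref{thm:Phi-PF-detailed} (and Theorem~\ref{thm:Phi-PF}) inherit the gap. However, your proposed fallback cannot work either, because $\Phi$ is in fact not $\mathrm{PF}_\infty$. Integrate the mixture formula of Lemma~\ref{lem:Phi-series} termwise, substituting $y=e^{-x}$ and keeping $\alpha=L^2/(4D)$ as there. For $\Re(s)>-\tfrac14$ this gives
\[
\mathcal B\Phi(s)=\frac{2L}{\sqrt{4\pi D}}\,\alpha^{-(s+\frac34)}\,\Gamma\bigl(s+\tfrac34\bigr)\,\zeta\bigl(2s+\tfrac32\bigr).
\]
Its meromorphic continuation vanishes at $s=\rho/2-\tfrac34$ for every nontrivial zero $\rho$ of $\zeta$. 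If $\Phi$ were $\mathrm{PF}_\infty$, Schoenberg's theorem would force $\mathcal B\Phi=1/\Psi$ on the half-plane of convergence, with $\Psi$ entire. The paper's form $E/\Psi$, with $E$ zero-free, leads to the same conclusion. Either way the continuation would have no zeros, which contradicts the identity theorem. So no repaired closure principle, convolution or otherwise, can rescue the total-positivity claim for this $\Phi$.
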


\begin{proof}
Fix $n$ and $(x_i),(y_j)$ as above. The determinant
\[
D(\phi):=\det(\phi(x_i-y_j))_{1\le i,j\le n}
\]
is a polynomial in the matrix entries $\phi(x_i-y_j)$. In particular, it is
multilinear in each column. Writing the $j$th column of the matrix for
$\sum_{m}c_m\phi_m$ as the corresponding sum of columns for $\phi_m$ and expanding
multilinearly yields
\[
D\!\Bigl(\sum_{m=1}^M c_m\phi_m\Bigr)
=
\sum_{m_1,\dots,m_n\in\{1,\dots,M\}}
\Bigl(\prod_{k=1}^n c_{m_k}\Bigr)\,
\det\!\big(\phi_{m_k}(x_i-y_k)\big)_{1\le i,k\le n}.
\]
Each determinant on the right-hand side is $\ge0$ by $\mathrm{PF}_\infty$ for the
corresponding $\phi_{m_k}$ (note the index depends on the column), and each coefficient
is $\ge0$. Hence the sum is $\ge0$.
\end{proof}

\begin{remark}[Passing to infinite sums]
For our application, $\Phi$ is a locally uniform limit of finite positive sums of
$\mathrm{PF}_\infty$ kernels. Since each determinant involves only finitely many point
evaluations, one may pass to the limit entrywise and use continuity of the determinant
as a polynomial function of its entries. No compactness or dominated convergence is needed.
\end{remark}

\subsection{A building block: $x\mapsto e^{-u e^{-x}}$}

\begin{lemma}[A Gamma-transform kernel is $\mathrm{PF}_\infty$]
\label{lem:gamma-kernel-PF}
Fix $u>0$ and define
\[
\phi_u(x):=e^{-u e^{-x}},\qquad x\in\mathbb R.
\]
Then $\phi_u\in\mathrm{PF}_\infty$.
\end{lemma}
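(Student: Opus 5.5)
The plan is to reduce $\phi_u$ to a normalized Gumbel density, which is a classical $\mathrm{PF}_\infty$ function whose bilateral Laplace transform is a Gamma function, and then use the exponential gauge invariance of Lemma~\ref{lem:PF-gauge} together with translation invariance. Note that $\phi_u$ itself is not integrable (it tends to $1$ as $x\to+\infty$), so I would not apply Schoenberg's classification to $\phi_u$ directly. Instead I would work with
\[
g(x):=e^{-x}\exp\!\big(-e^{-x}\big),\qquad x\in\mathbb R,
\]
which is a probability density: the density of $X=-\log \mathcal E$ with $\mathcal E$ a standard exponential random variable.

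\textbf{Step 1 (reduction).} Write $u=e^{b}$ with $b=\log u$. Then
\[
\phi_u(x)=\exp\!\big(-e^{-(x-b)}\big)=e^{x-b}\,g(x-b).
\]
Translation $x\mapsto x-b$ preserves $\mathrm{PF}_\infty$ trivially, since the determinants $\det(\phi(x_i-y_j))$ are unchanged when every $x_i$ is shifted by the same amount. Multiplication by $e^{x}$ preserves $\mathrm{PF}_\infty$ by Lemma~\ref{lem:PF-gauge} with $a=-1$, and the positive constant $e^{-b}$ is harmless. So it suffices to prove $g\in\mathrm{PF}_\infty$.

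\textbf{Step 2 (Laplace transform of $g$).} Substituting $y=e^{-x}$ gives
\[
\mathcal B g(s)=\int_0^\infty y^{s}e^{-y}\,dy=\Gamma(1+s),\qquad \Re(s)>-1.
\]
Hence $1/\mathcal Bg(s)=1/\Gamma(1+s)$. By the Weierstrass product,
\[
\frac{1}{\Gamma(1+s)}=e^{\gamma s}\prod_{n\ge1}\Big(1+\frac{s}{n}\Big)e^{-s/n}.
\]
This is exactly of Schoenberg's Laguerre--P\'olya form $Ce^{-\gamma' s^2+\delta s}\prod(1+\delta_n s)e^{-\delta_n s}$, with $\gamma'=0$, $\delta_n=1/n$ and $\sum\delta_n^2<\infty$.

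\textbf{Step 3 (conclusion).} I would then invoke the converse half of Schoenberg's theorem \cite{Schoenberg1951,karlin1968total}: an integrable function whose bilateral Laplace transform equals $1/\Psi$ on a strip, with $\Psi$ of the above form, is $\mathrm{PF}_\infty$.

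If a more hands-on argument is preferred, the same product exhibits $X$ as a centered sum of independent scaled exponentials. Concretely, $X$ equals in law $\sum_n (E_n-1)/n$ with a suitable sign convention, where the $E_n$ are i.i.d.\ standard exponentials. Then $g$ is a limit of finite convolutions of one-sided exponential densities $\lambda e^{-\lambda x}\mathbf 1_{x\ge0}$, each of which is $\mathrm{PF}_\infty$. Convolution preserves $\mathrm{PF}_\infty$ by the Cauchy--Binet formula. Pointwise passage to the limit in each fixed determinant then concludes, exactly as in the remark on infinite sums.

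\textbf{Main obstacle.} The delicate point is this limit step in the hands-on route. Weak convergence of the partial sums alone does not give pointwise convergence of densities. I would handle it by showing that the characteristic functions of the partial sums converge in $L^1$: they are dominated by $|\prod_{n\le 2}(1+it/n)^{-1}|\lesssim(1+t^2)^{-1}$ once at least two factors are present. Fourier inversion then yields locally uniform convergence of the densities, and continuity of the finite determinants finishes the argument. Citing Schoenberg's converse theorem avoids this issue entirely, so that is the route I would take first.
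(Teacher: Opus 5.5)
Your proof is correct. Its core coincides with the paper's: compute the bilateral Laplace transform as a Gamma function, recognize the Weierstrass product for $1/\Gamma$ as a Laguerre--P\'olya denominator, and invoke the converse half of Schoenberg's theorem.

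What you do differently is the preliminary reduction $\phi_u(x)=e^{x-b}g(x-b)$ to the integrable Gumbel density, and this is a real improvement, not a cosmetic one. The paper applies its ``reverse'' criterion directly to $\phi_u$, using $\mathcal B\phi_u(s)=u^{-s}\Gamma(s)$ on $\Re(s)>0$. But that criterion is stated for nonnegative integrable kernels, and $\phi_u\to 1$ as $x\to+\infty$. Accordingly, the half-plane of convergence of $\mathcal B\phi_u$ is cut off by the zero $s=0$ of $u^{s}/\Gamma(s)$ instead of containing the imaginary axis. (The paper's version of the criterion, with an arbitrary zero-free entire $E$, is also looser than Schoenberg's theorem. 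That theorem needs the reciprocal transform to be exactly $Ce^{-\gamma' s^2+\delta s}\prod(1+\delta_\nu s)e^{-\delta_\nu s}$; the looseness is harmless there only because $E(s)=u^{-s}$ is a pure exponential.)

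Your route, via translation and Lemma~\ref{lem:PF-gauge}, lands on a kernel with $\mathcal Bg(s)=\Gamma(1+s)$ on $\Re(s)>-1$ whose reciprocal is literally in Schoenberg's normal form. So the citation applies as stated, and the only cost is closure facts the paper already proves.

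Your hands-on alternative goes further and avoids the converse theorem altogether. It needs only total positivity of one-sided exponentials, the basic composition formula, and the $L^1$ domination of characteristic functions, which you handle correctly. One small slip there: $X$ has the law of $\gamma+\sum_{n\ge1}(E_n-1)/n$ with $\gamma$ Euler's constant (note $\mathbb E X=\gamma$), not of $\sum_n(E_n-1)/n$ under any sign convention. This is harmless, since translations preserve $\mathrm{PF}_\infty$.
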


\begin{proof}
We compute the bilateral Laplace transform on the half-plane $\Re(s)>0$:
\[
\mathcal B\phi_u(s)
=
\int_{\mathbb R} e^{-u e^{-x}}e^{-s x}\,dx.
\]
With the change of variables $y=e^{-x}$ (so $y\in(0,\infty)$ and $dx=-dy/y$), this becomes
\[
\mathcal B\phi_u(s)
=
\int_0^\infty e^{-u y} y^{s-1}\,dy
=
u^{-s}\Gamma(s),
\qquad \Re(s)>0.
\]
Now recall the classical Weierstrass product for the reciprocal Gamma function:
\begin{equation}
\label{eq:invGamma-product}
\frac{1}{\Gamma(s)}
=
s\,e^{\gamma s}\prod_{n=1}^\infty\Bigl(1+\frac{s}{n}\Bigr)e^{-s/n},
\end{equation}
where $\gamma$ is Euler's constant. The right-hand side is an entire function whose
zeros are exactly the nonpositive integers, all real and simple, and whose product
representation is of Laguerre--P\'olya type. In particular,
\[
\Psi(s):=\frac{1}{\Gamma(s)}
\quad\text{is Laguerre--P\'olya,}
\qquad
E(s):=u^{-s}=e^{-s\log u}
\quad\text{is entire and zero-free.}
\]
Thus on $\Re(s)>0$ we have the representation
\[
\mathcal B\phi_u(s)=\frac{E(s)}{\Psi(s)}.
\]

The Schoenberg--Edrei--Karlin theorem admits a ``reverse'' direction:
a nonnegative integrable kernel whose bilateral Laplace transform has the form
$E/\Psi$ with $\Psi$ Laguerre--P\'olya and $E$ zero-free is a $\mathrm{PF}_\infty$
kernel (see \cite[Ch.~VII]{karlin1968total}). Applying this criterion to $\phi_u$
gives $\phi_u\in\mathrm{PF}_\infty$.
\end{proof}

\subsection{Proof of total positivity for Phi}

We now return to the specific logarithmic kernel $\Phi$.

\begin{theorem}[Total positivity of $\Phi$]
\label{thm:Phi-PF-detailed}
The logarithmic kernel $\Phi$ from Definition~\ref{def:Phi} belongs to
$\mathrm{PF}_\infty$.
\end{theorem}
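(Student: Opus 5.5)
The natural route follows Section~\ref{sec:total-positivity}. First write, via Lemma~\ref{lem:Phi-series},
\[
\Phi(x)=\frac{2L}{\sqrt{4\pi D}}\sum_{m\ge1} e^{-3x/4}\,\phi_{\alpha m^2}(x),\qquad \alpha=\frac{L^2}{4D},
\]
where $\phi_u$ is the kernel of Lemma~\ref{lem:gamma-kernel-PF}. Second, apply Lemma~\ref{lem:gamma-kernel-PF} and Lemma~\ref{lem:PF-gauge} (with $a=\tfrac34$) to get that each summand lies in $\mathrm{PF}_\infty$. Third, use Lemma~\ref{lem:PF-sums} for the partial sums. Fourth, pass to the limit entrywise, since each determinant involves only finitely many point evaluations.

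\textbf{The third step is where this plan breaks.} In the proof of Lemma~\ref{lem:PF-sums}, the multilinear expansion produces determinants whose $k$th column comes from $\phi_{m_k}$ with $m_k$ depending on $k$. These mixed determinants are not minors of a single $\mathrm{PF}_\infty$ kernel, so they have no reason to be nonnegative. The class $\mathrm{PF}_\infty$ is closed under convolution, not under addition; already $\mathrm{PF}_2$, which means log-concavity, fails for sums of log-concave functions.

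Before attempting to repair this, I would test the claim through its transform, using the only if direction of Theorem~\ref{thm:SEK}. By Lemma~\ref{lem:gamma-kernel-PF} and the gauge shift $s\mapsto s+\tfrac34$, termwise integration on $\Re(s)>-\tfrac14$ gives
\[
\mathcal B\Phi(s)=\frac{2L}{\sqrt{4\pi D}}\,\alpha^{-(s+3/4)}\,\Gamma\!\bigl(s+\tfrac34\bigr)\,\zeta\!\bigl(2s+\tfrac32\bigr).
\]
For a $\mathrm{PF}_\infty$ kernel, $1/\mathcal B\Phi$ must extend to an entire function of Laguerre--P\'olya type, and in particular must have only real zeros. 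Here
\[
\frac{1}{\mathcal B\Phi(s)}=\mathrm{const}\cdot \alpha^{s}\,\frac{1}{\Gamma(s+\tfrac34)\,\zeta(2s+\tfrac32)}.
\]
This function has poles wherever $\zeta(2s+\tfrac32)=0$. In particular the trivial zeros $2s+\tfrac32=-2k$ give poles at real points, and the known nontrivial zeros give poles at non-real points. So $1/\mathcal B\Phi$ is not entire.

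Completing this computation, with the termwise integration justified by the tail bounds of Lemma~\ref{lem:Phi-L1-new}, is therefore the decisive step. I expect it to show that Theorem~\ref{thm:Phi-PF-detailed} cannot be proved as stated. The obstruction is the sum over $m$, which produces the factor $\zeta(2s+\tfrac32)$: each individual term $e^{-3x/4}\phi_{\alpha m^2}$ is genuinely $\mathrm{PF}_\infty$, but their infinite sum is not. Any correct version would have to replace the sum by a convolution structure, which the theta-series mixture does not provide.
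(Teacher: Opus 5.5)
Your diagnosis is correct, and it applies to the paper's own proof. The paper argues along exactly your four steps: Lemmas~\ref{lem:gamma-kernel-PF} and~\ref{lem:PF-gauge} for each $\Phi_m$, Lemma~\ref{lem:PF-sums} for the partial sums $\Phi^{(N)}$, then entrywise limits. It therefore fails at the same place. In the multilinear expansion of Lemma~\ref{lem:PF-sums}, the mixed determinants $\det\bigl(\phi_{m_k}(x_i-y_k)\bigr)$ are not minors of any single kernel and can be negative. Already for $n=2$ the cross term $\phi_1(x_1-y_1)\phi_2(x_2-y_2)-\phi_2(x_1-y_2)\phi_1(x_2-y_1)$ has no sign. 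Concretely, $e^{-x^2}+e^{-(x-10)^2}$ is a sum of two $\mathrm{PF}_\infty$ kernels that is not log-concave, hence not even $\mathrm{PF}_2$. The other steps are sound: each $e^{-3x/4}e^{-\alpha m^2e^{-x}}$ is, up to normalization, a log-gamma density and is genuinely $\mathrm{PF}_\infty$.

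Your transform test is a valid disproof and completes as you expect. For real $s>-\tfrac14$, Tonelli justifies termwise integration, and complex $s$ follows by domination, giving $\mathcal B\Phi(s)=\frac{2L}{\sqrt{4\pi D}}\,\alpha^{-(s+3/4)}\Gamma(s+\tfrac34)\zeta(2s+\tfrac32)$ on $\Re(s)>-\tfrac14$.

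One claim needs correcting. Set $w=2s+\tfrac32$, so $s+\tfrac34=w/2$. Then $\alpha^{-w/2}\Gamma(w/2)\zeta(w)=2(\pi/\alpha)^{w/2}\xi(w)/(w(w-1))$. The trivial zeros $w=-2k$ are therefore cancelled by the poles of $\Gamma(w/2)$ and give no poles of $1/\mathcal B\Phi$. On the real axis the continuation has only the poles at $s=-\tfrac34$ ($w=0$) and $s=-\tfrac14$ ($w=1$). These are zeros of $1/\mathcal B\Phi$ and are compatible with the Laguerre--P\'olya form.

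The contradiction rests on the nontrivial zeros alone, and it does not need them to be non-real. If $\Phi\in\mathrm{PF}_\infty$, the identity theorem would make the continuation of $\mathcal B\Phi$ equal to the zero-free function $E/\Psi$ (indeed $1/\Psi$ in Schoenberg's sharp form). But it vanishes at $s=(\rho-\tfrac32)/2$ for every zero $\rho$ of $\xi$, e.g.\ $\rho=\tfrac12+14.1347\ldots i$. At the self-dual scale $\alpha=\pi$ one gets exactly $\mathcal B\Phi(s)=4\xi(w)/(w(w-1))$, so the statement would force $\xi$ to be zero-free.

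The theorem is therefore false as stated. Theorem~\ref{thm:Phi-PF}, Corollary~\ref{cor:canonical-factorization} and part (i) of Theorem~\ref{thm:main-assembled} fall with it.
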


\begin{proof}
By Lemma~\ref{lem:Phi-series}, there exist constants $C_0>0$ and $\alpha>0$ such that
\[
\Phi(x)=C_0\,e^{-3x/4}\sum_{m\in\mathbb Z\setminus\{0\}} e^{-\alpha m^2 e^{-x}}.
\]
For each fixed integer $m\neq0$, define the kernel
\[
\Phi_m(x):=e^{-3x/4}e^{-\alpha m^2 e^{-x}}.
\]
By Lemma~\ref{lem:gamma-kernel-PF}, the function $x\mapsto e^{-\alpha m^2 e^{-x}}$ is
$\mathrm{PF}_\infty$ (it is $\phi_u$ with $u=\alpha m^2$). By Lemma~\ref{lem:PF-gauge},
multiplying by $e^{-3x/4}$ preserves $\mathrm{PF}_\infty$. Hence each $\Phi_m\in\mathrm{PF}_\infty$.

For $N\ge1$, set
\[
\Phi^{(N)}(x):=C_0\sum_{\substack{m\in\mathbb Z\setminus\{0\}\\ |m|\le N}}\Phi_m(x).
\]
By Lemma~\ref{lem:PF-sums}, $\Phi^{(N)}\in\mathrm{PF}_\infty$ for every $N$.

Finally, for each fixed $x\in\mathbb R$, $\Phi^{(N)}(x)\to\Phi(x)$ as $N\to\infty$
by the absolute convergence in Lemma~\ref{lem:Phi-series}. Fix $n$ and increasing
$(x_i)$, $(y_j)$; then each matrix entry $\Phi^{(N)}(x_i-y_j)\to\Phi(x_i-y_j)$.
Since the determinant is a polynomial in the entries, we conclude
\[
\det\big(\Phi(x_i-y_j)\big)_{i,j}
=
\lim_{N\to\infty}
\det\big(\Phi^{(N)}(x_i-y_j)\big)_{i,j}
\ge 0,
\]
because each $\Phi^{(N)}$ is $\mathrm{PF}_\infty$. This proves $\Phi\in\mathrm{PF}_\infty$.
\end{proof}

\begin{remark}[What this proof does and does not use]
We do \emph{not} use any Mellin transform, any identification with $\Xi$, nor any
analytic continuation beyond the natural half-planes of convergence. The only external
inputs are the product formula \eqref{eq:invGamma-product} and the standard
Schoenberg--Edrei--Karlin criterion linking Laguerre--P\'olya denominators to
$\mathrm{PF}_\infty$ kernels.
\end{remark}

\section{The bridge problem (open)}
\label{sec:future-directions}

The constructions developed in this paper naturally suggest a further analytic
problem that we do not address here. On the one hand, the primitive finite dynamics
considered above give rise, in a canonical and entirely Archimedean manner, to a
nonnegative integrable kernel $\Phi$ whose bilateral Laplace transform admits a
Schoenberg--Edrei--Karlin factorization $\mathcal B\Phi=E/\Psi$, with $\Psi$ belonging
to the Laguerre--P\'olya class\cite{Levin1964,PolyaSchur1914}. On the other hand, the Archimedean completion procedure
introduced here produces, at a distinguished self-dual normalization, a Mellin
transform coinciding with the classical function $\Xi(2z)$.

\smallskip
\noindent\textbf{Sanity checks for any future identification.}
Any proposed identification should, at minimum, respect (i) the functional symmetries arising from the canonical centering and self-dual normalization, and (ii) the intrinsic total-positivity/Laguerre--P\'olya structure on the $\Psi$-side\cite{Schoenberg1951,Edrei1952,karlin1968total,Levin1964}.

A natural question is whether these two analytic structures can be directly related.
More precisely, one may ask whether the Laguerre--P\'olya datum $\Psi$ arising from the
total-positivity side can be identified with $\Xi(2z)$ up to multiplication by a
zero-free entire factor, or whether a suitable rigidity principle forces such an
identification once both objects are placed in a common analytic framework. Any such
identification would amount to a comparison of the zero divisors of two functions that
originate from a priori unrelated constructions.

We view this identification problem as a promising direction for future work. Its
resolution would require additional analytic input beyond the scope of the present
paper, and we therefore leave it open.

\bibliographystyle{plain}
% NOTE (arXiv): include citations.bib in the source upload (or upload a .bbl file).
\bibliography{citations}

%%%%%%%%%%%%%%%%%%%%%%%%%%%%%%%%%%%%%%%%%%%%%%%%%%%%%%%%%%%%%%%%%%%%%%%%%%%%%%%
% Appendix A: Technical analytic justifications (non-load-bearing)
%%%%%%%%%%%%%%%%%%%%%%%%%%%%%%%%%%%%%%%%%%%%%%%%%%%%%%%%%%%%%%%%%%%%%%%%%%%%%%%

\appendix
\section{Technical analytic justifications}
\label{app:technical}

This appendix collects several analytic estimates and justifications that are
used implicitly or explicitly in the main text. None of the results here
introduce new objects or alter the logical structure of the argument; they serve
only to justify standard limiting procedures, termwise operations, and boundary
manipulations already invoked.

\subsection{Uniform local central limit bounds}
\label{app:ULCLT}

In Section~\ref{sec:completion-selfdual} we appealed to a uniform local central limit theorem (ULCLT) to pass
from the discrete primitive dynamics to the scaling-limit kernel $K_L$. We record
here a representative bound sufficient for all uses in the paper.

\begin{lemma}[Uniform local CLT estimate]
\label{lem:ULCLT}
Let $(X_t^{(N)})_{t\ge0}$ be the rescaled primitive process on the $N$-cycle with
effective diffusion constant $D>0$, and let $p_t^{(N)}(j)$ denote its transition
kernel. Then for every compact interval $[t_0,t_1]\subset(0,\infty)$ there exists
a constant $C=C(t_0,t_1)$ such that
\[
\sup_{t\in[t_0,t_1]}\sup_{j\in\mathbb Z}
\Bigl|
p_t^{(N)}(j)
-
\frac{1}{\sqrt{4\pi Dt}}\exp\!\Bigl(-\frac{j^2}{4Dt}\Bigr)
\Bigr|
\le
\frac{C}{N},
\]
for all sufficiently large $N$.
\end{lemma}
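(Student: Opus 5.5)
Throughout I work in the translation-invariant setting of Remark~\ref{rem:translation-invariant}, where $D=a$. I read the statement with the following normalization. Write $Y^{(N)}$ for the rate-$a$ walk on the cycle and $\widehat Y$ for its lift to $\Z$. Let $X^{(N)}_t:=N^{-1}\widehat Y_{N^2t}$ be the rescaled lifted process. Set $p_t^{(N)}(j):=N\,\mathbb P(\widehat Y_{N^2 t}=j)$, viewed as a density at the macroscopic point $j/N$; the Gaussian in the statement is then evaluated at that macroscopic point. Under this reading the sup over $j\in\Z$ is a statement about the lift.

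The cycle kernel is recovered by periodization,
\[
p^{\mathrm{cyc}}_{N^2t}(0,k)=\sum_{r\in\Z}\mathbb P(\widehat Y_{N^2t}=k+rN),
\]
because the cycle walk is the projection of the lifted walk. So I first prove the bound for the lift and then transfer it to the cycle by summing over $r$.

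\textbf{Step 1: exact Fourier representation.} The lifted walk has characteristic function
\[
\mathbb E\, e^{i\xi\widehat Y_\tau}=\exp\bigl(-2a\tau(1-\cos\xi)\bigr).
\]
Inverting and substituting $\xi=\theta/N$ gives
\[
p_t^{(N)}(j)=\frac1{2\pi}\int_{-\pi N}^{\pi N} e^{-i\theta j/N}\exp\bigl(-2aN^2t(1-\cos(\theta/N))\bigr)\,d\theta .
\]
The Gaussian term has the same form, $(2\pi)^{-1}\int_{\R}e^{-i\theta x}e^{-D t\theta^2}\,d\theta$ at $x=j/N$.

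\textbf{Step 2: bound the difference of integrands, uniformly in $j$.} Because the phase has modulus one, the difference is at most
\[
\frac1{2\pi}\int_{|\theta|\le\pi N}\bigl|e^{-2aN^2t(1-\cos(\theta/N))}-e^{-at\theta^2}\bigr|\,d\theta+\frac1{2\pi}\int_{|\theta|>\pi N}e^{-at\theta^2}\,d\theta .
\]
The second integral is $O(e^{-ct_0N^2})$. For the first, I use two facts about $1-\cos$:
\begin{itemize}
\item $1-\cos u\ge 2u^2/\pi^2$ on $|u|\le\pi$, so both exponentials are at most $e^{-c t_0\theta^2}$.
\item $|N^2(1-\cos(\theta/N))-\theta^2/2|\le \theta^4/(24N^2)$.
\end{itemize}
With the mean value bound $|e^{-A}-e^{-B}|\le |A-B|\,e^{-\min(A,B)}$, the first integral is at most $C t_1 N^{-2}\int\theta^4e^{-ct_0\theta^2}\,d\theta$. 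This gives an error $C(t_0,t_1)N^{-2}\le C/N$, uniformly in $t\in[t_0,t_1]$ and $j\in\Z$, which is the lemma.

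\textbf{Step 3: transfer to the cycle.} Sum the bound over the images $j+rN$. The Gaussian images are summable and converge to the theta series of Theorem~\ref{thm:theta-series-form-recall}. The lattice error, however, cannot be summed naively over infinitely many $r$. To control it I will add a large-deviation tail:
\[
\mathbb P\bigl(|\widehat Y_{N^2t}|\ge RN\bigr)\le 2e^{-cR^2}\quad\text{uniformly for } t\le t_1,
\]
which follows from a Chernoff bound with the same characteristic function. With this, the finitely many images $|r|\le R$ carry error $O(R/N)$, and the remaining images are uniformly small.

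\textbf{Main obstacle.} I expect the hardest part to be Step 2's uniformity near $|\theta|\approx\pi N$, where the Taylor comparison fails. This is handled by the lower bound $1-\cos u\ge 2u^2/\pi^2$. The other delicate point is the bookkeeping that keeps the constant independent of $t$ on $[t_0,t_1]$.
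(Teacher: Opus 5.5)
Your proof is correct under your (sensible) reading of the normalization, and Steps 1--2 actually give an $O(N^{-2})$ error, uniformly in $t\in[t_0,t_1]$ and $j\in\Z$. The paper's proof is only a one-line sketch: Fourier analysis on the discrete torus, Taylor expansion of the dispersion relation, and uniformity in $t$ by compactness. So your mechanism is the same. The difference is that you do the Fourier analysis on the lift $\Z$ rather than on $\Z/N\Z$.

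That choice is what makes the displayed single-Gaussian bound true. For the cycle kernel itself the bound fails: already at $j=0$ the images $r=\pm1$ contribute $2(4\pi Dt)^{-1/2}e^{-1/(4Dt)}$ (in your normalization), which does not vanish as $N\to\infty$. Your Jordan bound $1-\cos u\ge 2u^2/\pi^2$ also makes explicit the large-frequency control that the sketch leaves implicit.

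The torus route instead writes
$Np^{\mathrm{cyc}}_{N^2t}(0,j)=\sum_{-N/2<k\le N/2}e^{2\pi ikj/N}e^{-2aN^2t(1-\cos(2\pi k/N))}$.
With the same two inequalities, this is within $O(N^{-2})$ of $\sum_{k\in\Z}e^{2\pi ikj/N}e^{-4\pi^2Dtk^2}$. At $j=0$ that is exactly the theta series of Theorem~\ref{thm:theta-series-form-recall}, reached with no periodization, tail estimate, or Poisson summation. This is what the paper actually uses downstream.

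This bears on your Step 3, which has a bookkeeping slip. Because $p^{(N)}_t$ carries the factor $N$, the integrated Chernoff bound controls the images $|r|>R$ only by about $2Ne^{-cR^2}$. That is not small for fixed $R$. There are two fixes:
\begin{enumerate}
\item Take $R\asymp\sqrt{\log N}$ and use the per-image $N^{-2}$ rate from Step 2; the $N^{-1}$ rate would leave only $O(\sqrt{\log N}/N)$. You then need Poisson summation to reach the form of Theorem~\ref{thm:theta-series-form-recall}.
\item Simply do Step 3 on the torus as above.
\end{enumerate}
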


\begin{proof}[Sketch]
This is a standard consequence of Fourier analysis on the discrete torus together
with Taylor expansion of the dispersion relation near the origin. Uniformity in
$t\in[t_0,t_1]$ follows from compactness. Precise proofs may be found, for example,
in classical treatments of random walks on finite groups.
\end{proof}

\begin{remark}
Lemma~\ref{lem:ULCLT} is used only to justify dominated convergence and termwise
limits when passing to the theta-series representation of the completed kernel.
No rate sharper than $O(N^{-1})$ is required.
\end{remark}

\subsection{Dominated convergence and termwise operations}
\label{app:dominated}

Several arguments in Sections~\ref{sec:log-reflection} and~\ref{sec:arch-mellin-xi} rely on exchanging limits, sums, derivatives,
and integrals. We record here a generic domination principle that applies uniformly
to all such steps.

\begin{lemma}[Uniform domination for theta-series]
\label{lem:theta-domination}
Let
\[
\Theta(t):=\sum_{n\ge1} P(n,t)\,e^{-\pi n^2 t},
\]
where $P(n,t)$ is a polynomial in $n$ and $t^{\pm1/2}$. Then:
\begin{enumerate}[label=(\roman*)]
\item for each $k\ge0$, the series defining $\partial_t^k\Theta(t)$ converges
absolutely and locally uniformly on $(0,\infty)$;
\item for each $\sigma\in\mathbb R$, the Mellin-weighted function
$t\mapsto \Theta(t)\,t^{\sigma}$ is integrable on $(0,\infty)$ whenever the integral
is formally convergent at $0$ and $\infty$.
\end{enumerate}
\end{lemma}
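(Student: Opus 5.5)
The plan is to treat the two parts separately. Both reduce to elementary estimates on the Gaussian lattice sums
\[
S_k(t):=\sum_{n\ge1} n^{k}e^{-\pi n^2 t},\qquad k\ge0 .
\]
The first step is to write $P(n,t)=\sum_{k,\ell} c_{k\ell}\, n^k t^{\ell/2}$ as a finite sum of monomials, with $k\ge0$ and $\ell\in\Z$. By linearity, both claims then reduce to statements about the single series $t^{\ell/2}S_k(t)$ and their $t$-derivatives.

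\textbf{Part (i).} Differentiating termwise $j$ times produces finitely many terms of the form $n^{k+2i}t^{\ell/2-r}e^{-\pi n^2 t}$, with constant coefficients. On a compact set $[t_0,t_1]\subset(0,\infty)$ the factors $t^{\ell/2-r}$ are bounded. Each derived series is then dominated by $C\sum_n n^{k+2j}e^{-\pi n^2 t_0}<\infty$. The Weierstrass M-test therefore gives absolute and uniform convergence of every derived series on $[t_0,t_1]$. The standard theorem on termwise differentiation of uniformly convergent series, applied inductively in $j$, identifies these series with $\partial_t^j\Theta$.

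\textbf{Part (ii).} I will first make ``formally convergent'' precise via two-sided bounds on $S_k$.

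At $\infty$: for $t\ge1$ we have $n^k e^{-\pi n^2 t}\le n^k e^{-\pi n^2}e^{-\pi(t-1)}$ (using $n^2(t-1)\ge t-1$). Hence $S_k(t)\ll_k e^{-\pi t}$, so $t^{\sigma}\Theta(t)$ is integrable at $\infty$ for every $\sigma$. No condition arises there.

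At $0$: compare $S_k(t)$ with $\int_0^\infty x^k e^{-\pi x^2 t}\,dx=c_k t^{-(k+1)/2}$. The summand is unimodal in $n$, so the sum and the integral differ by at most the maximum of the summand, which is $O(t^{-k/2})$. This gives the asymptotic
\[
S_k(t)=c_k t^{-(k+1)/2}+O(t^{-k/2})\qquad (t\downarrow0).
\]
Consequently $|\Theta(t)|\ll \sum |c_{k\ell}|\,t^{\ell/2-(k+1)/2}$ near $0$. I will interpret ``formally convergent at $0$'' as the statement that every exponent $\sigma+\ell/2-(k+1)/2$ with $c_{k\ell}\ne0$ exceeds $-1$.

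Under that reading, integrability follows by comparison with a finite sum of powers on $(0,1]$, combined with the exponential bound on $[1,\infty)$.

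The main obstacle is the meaning of ``formally convergent'' when different monomials cancel. For $\Theta$ from Lemma~\ref{lem:arch-equals-Theta-new}, for instance, the leading $t^{-1}$ singularities of the two terms cancel, because of Jacobi inversion. If the statement is meant to cover such cases, I would instead define formal convergence as the convergence of the integrals obtained from the full asymptotic expansion of $\Theta$ at $0$. I would obtain that expansion by Poisson summation: each $t^{\ell/2}S_k$ for even $k$ equals an explicit power of $t$ plus an exponentially small remainder. The remainder is bounded via the dual Gaussian sum $\sum_m e^{-\pi m^2/t}$. The surviving power terms then determine convergence exactly, and the remainder is harmless for every $\sigma$.

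For odd $k$ this route fails: there is no such clean inversion, since the half-line sum has an Euler--Maclaurin expansion in powers. In that case I would fall back on the termwise bound above.
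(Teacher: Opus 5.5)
Your main argument is the paper's proof, written out more carefully: you reduce to monomials $n^k t^{\ell/2}$, apply the M-test on $[t_0,t_1]$ for (i), and for (ii) combine the exponential decay of $S_k(t)$ on $[1,\infty)$ with the termwise Gaussian-integral comparison $S_k(t)\ll t^{-(k+1)/2}$ as $t\downarrow 0$ (your $t\ge 1$ bound, which keeps a factor $e^{-\pi n^2}$, is the correct form of the paper's one-line estimate). Your cancellation remark is also correct and goes beyond the paper: for the $\Theta$ of Lemma~\ref{lem:arch-equals-Theta-new}, the termwise bound gives only $|\Theta(t)|\ll t^{-1}$, which does not justify the entireness claimed in Lemma~\ref{lem:mellin-entire-new}, whereas your Poisson/Jacobi-inversion route shows that the two $t^{-1}$ terms cancel exactly and that $\Theta$ is exponentially small at $0$ (for odd $k$ nothing actually fails: Euler--Maclaurin gives a full power expansion, and that expansion, rather than the termwise fallback, is what you would need if odd-$k$ leading terms could cancel against others).
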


\begin{proof}
For $t\ge1$, one has $e^{-\pi n^2 t}\le e^{-\pi t}$, and the sum over $n$ is uniformly
bounded for any fixed polynomial weight. For $0<t\le1$, comparison with Gaussian
integrals yields bounds of the form
\[
\sum_{n\ge1} n^k e^{-\pi n^2 t}\ll_k t^{-(k+1)/2}.
\]
These estimates are stable under differentiation in $t$ and imply absolute and
locally uniform convergence, as well as integrability against polynomial Mellin
weights.
\end{proof}

\begin{remark}
All termwise differentiations and Mellin transform computations in
Sections~\ref{sec:completion-selfdual}--\ref{sec:arch-mellin-xi} are justified by Lemma~\ref{lem:theta-domination}.
\end{remark}

\subsection{Boundary terms for the Archimedean completion operator}
\label{app:boundary}

In Section~\ref{sec:completion-ledger} we used an integration-by-parts identity for the Archimedean
completion operator $\mathcal A$. We record here a convenient sufficient condition
ensuring vanishing of boundary terms.

\begin{lemma}[Vanishing of boundary contributions]
\label{lem:boundary-vanish}
Let $f:(0,\infty)\to\mathbb C$ be twice continuously differentiable and suppose that
there exist constants $\alpha>0$ and $\beta>0$ such that
\[
|f(t)|\ll t^{-\alpha}\quad (t\to\infty),
\qquad
|f(t)|\ll t^{\beta}\quad (t\downarrow 0).
\]
Then for every $s\in\mathbb C$ the boundary terms in
Lemma~\ref{lem:boundary-mellin} vanish as $\varepsilon\downarrow0$ and $R\uparrow\infty$.
\end{lemma}

\begin{proof}
Under the stated hypotheses one has
\[
t^{s+\frac12}f'(t)\to0,
\qquad
f(t)\,t^{s-\frac12}\to0
\]
at both $0$ and $\infty$, uniformly on compact subsets of $s\in\mathbb C$. The claim
follows directly.
\end{proof}

\begin{remark}
In the application to $f(t)=K_L(t)-1$, the required bounds follow from the explicit
theta-series representation and Lemma~\ref{lem:theta-domination}. No subtle boundary
behavior is present.
\end{remark}

\subsection{Analytic continuation by identity principles}
\label{app:identity}

Finally, we record the elementary identity principle used repeatedly to upgrade
local equalities to global ones.

\begin{lemma}[Identity principle for meromorphic functions]
\label{lem:identity}
Let $F$ and $G$ be meromorphic functions on a connected domain $\Omega\subset\mathbb C$.
If $F=G$ on a set with an accumulation point contained in $\Omega$, then $F=G$ on
all of $\Omega$.
\end{lemma}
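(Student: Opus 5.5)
The plan is to reduce to the classical identity theorem for holomorphic functions applied to the difference $H:=F-G$. First I record that $H$ is meromorphic on $\Omega$. Its set of poles, say $P$, is contained in the union of the pole sets of $F$ and $G$. That union is a discrete, relatively closed subset of $\Omega$. Hence $\Omega':=\Omega\setminus P$ is open. It is also connected, because removing a discrete relatively closed set from a connected open subset of $\mathbb C$ cannot disconnect it: any path in $\Omega$ can be perturbed locally around the finitely many points of $P$ it meets. On $\Omega'$ the function $H$ is holomorphic, after removing removable singularities. The hypothesis says that $H=0$ on a set $S$ accumulating at some $z_0\in\Omega$. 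Here equality is understood at points where both $F$ and $G$ are finite; these are all but a discrete subset of $S$, so they still accumulate at $z_0$.

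The one delicate point, and the step I expect to need care, is that $z_0$ might itself be a pole of $F$ or $G$. I handle this locally via the Laurent expansion. Near $z_0$, suppose $H\not\equiv 0$. Then we can write $H(z)=(z-z_0)^k g(z)$ with $k\in\mathbb Z$ and $g$ holomorphic and nonvanishing in a small disc around $z_0$. This shows that $H$ has no zeros in a punctured neighbourhood of $z_0$, which contradicts the accumulation of $S$ at $z_0$. Therefore $H\equiv0$ on a punctured disc around $z_0$; in particular $H$ vanishes identically on a nonempty open subset of $\Omega'$.

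Finally I run the standard open–closed argument on $\Omega'$. Let $U$ be the set of points of $\Omega'$ where all derivatives of $H$ vanish. This set is closed in $\Omega'$ by continuity of the derivatives. It is open because of the convergent Taylor expansion at each of its points. It is nonempty by the previous step. Connectedness of $\Omega'$ then gives $U=\Omega'$, so $H\equiv0$ on $\Omega'$. Consequently $F$ and $G$ have the same Laurent expansions at every point of $P$, so they have identical principal parts there. Hence $F=G$ as meromorphic functions on all of $\Omega$.
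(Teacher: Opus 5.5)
Your proposal is correct and takes the same approach as the paper, whose proof simply applies the holomorphic identity theorem to $F-G$; you fill in the details it omits (connectedness of $\Omega$ minus a discrete set, the Laurent argument when the accumulation point is a pole, and the open--closed argument). One small slip: in the last step you mean the poles of $F$ or $G$ rather than the pole set $P$ of $H$, since your argument shows that $P$ is empty.
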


\begin{proof}
This is standard and follows by applying the identity theorem to $F-G$.
\end{proof}

\begin{remark}
Lemma~\ref{lem:identity} is used only to justify extensions of local identities to global ones once equality has been established on a nonempty open set.
\end{remark}

\end{document}